\documentclass[12pt, reqno]{amsart}%

\usepackage{amssymb}
\usepackage{amsmath,esint}
\usepackage[shortlabels]{enumitem}
\usepackage{amsthm}
\usepackage{amscd,mdwlist}
\usepackage[margin=1in]{geometry}
\usepackage{color}
\usepackage{cite}
\usepackage{bbm}
 \usepackage{tcolorbox}
 \usepackage{thmtools}
 \usepackage[backgroundcolor=white, bordercolor=blue,
 linecolor=blue]{todonotes}
 
\usepackage[pagebackref, pdfpagelabels, plainpages=false]{hyperref}

\hypersetup{
   colorlinks=true,
   linkcolor=red,
   filecolor=magenta,
   urlcolor=cyan,
   citecolor=blue,
}

\pagestyle{plain} 

\theoremstyle{plain}
\declaretheorem[title=Theorem, parent=section]{theorem}
\declaretheorem[title=Lemma,sibling=theorem]{lemma}
\declaretheorem[title=Proposition,sibling=theorem]{proposition}
\declaretheorem[title=Corollary,sibling=theorem]{corollary}

\theoremstyle{definition}
\declaretheorem[title=Definition,sibling=theorem]{definition}
\declaretheorem[title=Remark,sibling=theorem]{remark}
\declaretheorem[title=Remark, numbered=no]{remark*}
\declaretheorem[title=Example, sibling=theorem]{example}

\numberwithin{equation}{section}

\DeclareMathOperator{\dist}{dist}
\DeclareMathOperator{\supp}{supp}

%

\newcommand{\il}{\int\limits}
\newcommand{\iil}{\iint\limits}
\DeclareMathOperator{\R}{\mathbb{R}}
\renewcommand{\d}{\mathrm{d}}


\newcommand{\VnuOm}{V_{\nu}(\Omega|\R^d)}

\newcommand{\eps}{\varepsilon}
\newcommand{\bet}[1]{\left| #1 \right|}

\newcommand{\vertiii}[1]{{\left\vert\kern-0.25ex\left\vert\kern-0.25ex\left\vert #1 \right\vert\kern-0.25ex\right\vert\kern-0.25ex\right\vert}}

\setcounter{tocdepth}{1}

\usepackage[english]{babel}

\addto\extrasenglish{%

}

\begin{document}

\title{Mosco convergence of nonlocal to local quadratic forms}

\author{Guy Fabrice Foghem Gounoue}
\address{Universit\"at Bielefeld, Fakult\"at f\"ur Mathematik, Postfach 100131, 33501 Bielefeld, Germany}
\email{guy.gounoue@math.uni-bielefeld.de}

\author{Moritz Kassmann} 
\address{Universit\"at Bielefeld, Fakult\"at f\"ur Mathematik, Postfach 100131, 33501 Bielefeld, Germany}
\email{moritz.kassmann@uni-bielefeld.de}

\author{Paul Voigt}
\address{Universit\"at Bielefeld, Fakult\"at f\"ur Mathematik, Postfach 100131, 33501 Bielefeld, Germany}
\email{pvoigt@math.uni-bielefeld.de}

\thanks{Financial support by the DFG via IRTG 2235: ``Searching for the regular in the irregular: Analysis of singular and random systems'' is gratefully acknowledged.}

\begin{abstract}
We study sequences of nonlocal quadratic forms and function spaces that are related to Markov jump processes in bounded domains with a Lipschitz boundary. Our aim is to show the convergence of these forms to local quadratic forms of gradient type. Under suitable conditions we establish the convergence in the sense of Mosco. Our framework allows bounded and unbounded nonlocal operators to be studied at the same time. Moreover, we prove that smooth functions with compact support are dense in the nonlocal function spaces under consideration.
\end{abstract}

\keywords{Dirichlet forms, Mosco-convergence, Sobolev spaces, integro-differential operators}
\subjclass[2010]{28A80, 35J20, 35J92, 46B10, 46E35, 47A07, 49J40, 49J45}


\maketitle
\date{December 29, 2018}

\section{Introduction} 

In the last two decades the study of nonlocal operators and integro-differential operators has attracted much attention. Here, we have in mind linear or nonlinear operators satisfying a maximum principle as the fractional Laplace operator does. In this work we study the convergence of sequences of such nonlocal operators to local differential operators. Let $(\alpha _n)$ be a sequence of numbers $\alpha_n \in (0,2)$ with $\lim \alpha_n = 2$. Given a function $u \in C^\infty_c(\R^d)$, the convergence 
\begin{align}\label{eq:frac-laplace_to_laplace}
(-\Delta)^{\alpha_n/2} u \longrightarrow  -\Delta u 
\end{align}
clearly holds true. There are many possible ways resp. topologies in which the operators $(-\Delta)^{\alpha_n/2}$ converge to the classical Laplace operator. In this work we do not study the operators directly. We focus on corresponding quadratic forms because they appear naturally when formulating boundary or complement value problems. Note that for functions $u,v \in C^\infty_c(\R^d)$ the equality
\begin{align}\label{eq:part-int_frac-laplace}
\il_{\R^d}  (-\Delta)^{\alpha/2} u (x) v (x) \d x  = \frac{C_{d, \alpha}}{2} \iil_{\R^d \R^d} \frac{(u(x)-u(y))^2}{|x-y|^{d+\alpha}} \d x \, \d y
\end{align}
holds true. Here,  $ C_{d, \alpha}$ is a constant depending on the dimension $d$ and the value $\alpha \in (0,2)$, for which the relation $\widehat{(-\Delta)^{\alpha/2} u(\xi)} = |\xi|^\alpha \widehat{u}(\xi)$ holds true in  $C^\infty_c(\R^d)$. Let us mention that asymptotically $C_{d, \alpha}\asymp \alpha (2-\alpha)$, which is important for our analysis. Interested readers  may consult \cite{Hitchhiker} for  more details about the fractional Laplacian $(-\Delta)^{\alpha/2}$ and the constant $C_{d,\alpha}$. If $\Omega \subset \R^d$ is open and $u \in C^\infty_c(\Omega)$, then one can easily show 

\begin{align}\label{eq:frac-forms_to_gradient-form}
\frac{C_{d, \alpha}}{2} \iil_{\Omega \Omega} \frac{(u(x)-u(y))^2}{|x-y|^{d+\alpha}} \d x \, \d y \longrightarrow \il_{\Omega} |\nabla u|^2 \quad \text{ as } \alpha \to 2- \,.
\end{align}
In light of equalities \eqref{eq:frac-laplace_to_laplace} and \eqref{eq:part-int_frac-laplace} this is a natural result. A more interesting version of this result is proved in \cite{BBM01}. Therein, it is shown that \eqref{eq:frac-forms_to_gradient-form} holds true if $\Omega \subset \R^d$ is a bounded open set with a Lipschitz boundary and $u \in H^1(\Omega)$. The regularity assumption on $\Omega$ and $u$ ensures that suitable extensions of $u$ to the whole space exist. Analogously to the above, one can easily prove for $\Omega \subset \R^d$ open and $u \in C^\infty_c(\R^d)$ the following result:
\begin{align}
\frac{C_{d, \alpha}}{2} \iil_{\Omega \R^d} \frac{(u(x)-u(y))^2}{|x-y|^{d+\alpha}} \d x \, \d y &\longrightarrow \il_{\Omega} |\nabla u|^2 \quad \text{ as } \alpha \to 2- \,, \label{eq:new-frac-forms_to_gradient-form} \\
\frac{(2+d)}{\eps^{d+2}\omega_{d-1}} \iil_{\Omega \R^d} \big(u(x)-u(y) \big)^2 \mathbbm{1}_{B_\eps}(x-y) \d x \, \d y &\longrightarrow \il_{\Omega} |\nabla u|^2 \quad \text{ as } \eps \to 0+ \,. \label{eq:bounded-frac-forms_to_gradient-form}
\end{align}
The expression on the left-hand side of \eqref{eq:new-frac-forms_to_gradient-form} naturally appears when studying nonlocal Dirichlet or Neumann problems with prescribed data on the complement of $\Omega$, see \cite{FKV15 ,DRV17}.  It is important for the study of Dirichlet-to-Neumann maps of certain nonlocal problems involving the the fractional Laplacian, see \cite{Calderon-Tuhin}. The expression also appears when studying extension theorems for nonlocal operators, see \cite{DyKa18, BGPR17}. 

\medskip

Assertions \eqref{eq:frac-forms_to_gradient-form}, \eqref{eq:new-frac-forms_to_gradient-form} and \eqref{eq:bounded-frac-forms_to_gradient-form} describe the convergence of a sequence of numbers since the function $u$ is fixed. The main aim of this paper is to prove a result in the spirit of \eqref{eq:new-frac-forms_to_gradient-form} but not for a given function. We study the convergence of forms in the sense of Mosco, see \autoref{def:mosco}, which is a well-known generalization of the famous concept of $\Gamma$-convergence. The result  then applies to variational solutions to boundary data or complement data problems. Note that our main result \autoref{thm:Mosco-convergence} covers sequences of forms with bounded and unbounded kernels at the same time. 

\medskip

An important role in our study is played by function spaces. We assume that $\Omega$ is a bounded open subset of $\R^d$. For several results we assume that $\Omega$ has a Lipschitz boundary. Let us introduce generalized Sobolev-Slobodeckij-like spaces with respect to an unimodal L\'{e}vy measure $\nu(h) \d h$. Assume $\nu:\R^d\setminus \{0\} \to [0, \infty)$ is a radial function, which (a) satisfies $\nu \in L^1(\R^d, (1 \wedge |h|^2) \d h)$ and (b) is almost decreasing, i.e., there is $c \geq 0$ such that $|y| \geq |x|$ implies $\nu(y) \leq c \nu(x)$. The function $\nu$ then is the density of an unimodal L\'evy measure. Possible examples are given by $\nu(h) = \mathbbm{1}_{B_1}(h)$ and for $\alpha \in (0,2)$ by $\nu(h) = C_{d,\alpha} |h|^{-d-\alpha}$ for $h \in \R^d, h \ne 0$. With the help of $\nu$ we can now define several function spaces. Set 
\begin{align*}
H_{\nu} (\Omega)= \Big\{u \in L^2(\Omega)| \iil_{\Omega\Omega} \big(u(x)-u(y) \big)^2 \, \nu (x-y)\mathrm{d}x\,\mathrm{d}y<\infty \Big \}\,.             
\end{align*}
We endow this space with the norm
\begin{align*}
\|u\|^2_{H_{\nu} (\Omega)}= \|u\|^2_{L^{2} (\Omega)}+ \iil_{\Omega\Omega} \big(u(x)-u(y) \big)^2 \, \nu (x-y) \mathrm{d}x\,\mathrm{d}y.               
\end{align*}
Note that for bounded functions $\nu$, e.g., in the case $\nu(h) = \mathbbm{1}_{B_1}(h)$, the space $H_{\nu} (\Omega)$ equals $L^2(\Omega)$.  Following \cite{FKV15}  we define $ V_{\nu} (\Omega|\mathbb{R}^d)$ as follows:
\begin{align*}
V_{\nu} (\Omega|\mathbb{R}^d) = \Big\lbrace  u: \R^d \to \R \text{ meas. } |  \, [u]^2_{V_{\nu} (\Omega|\mathbb{R}^d)} = \!\!\iil_{(\Omega^c\times \Omega^c)^c}  \!\!\big(u(x)-u(y) \big)^2 \, \nu (x-y) \mathrm{d}x \, \mathrm{d}y <\infty  \Big\rbrace \,.              
\end{align*}
We endow this space with two norms as follows:
\begin{align*}
\|u\|^2_{V_{\nu} (\Omega|\mathbb{R}^d)} &:= \|u\|^2_{L^{2} (\R^d)}+  \iil_{(\Omega^c\times \Omega^c)^c} \big(u(x)-u(y) \big)^2 \, \nu (x-y) \mathrm{d}x \, \mathrm{d}y \,, \\               
\vertiii{u}^2_{V_{\nu} (\Omega|\mathbb{R}^d)} &:= \|u\|^2_{L^{2} (\Omega)}+  \iil_{(\Omega^c\times \Omega^c)^c} \big(u(x)-u(y) \big)^2 \, \nu (x-y) \mathrm{d}x \, \mathrm{d}y \,.
\end{align*}

Note that for $\alpha \in (0,2)$, $\nu(h) = C_{d,\alpha} |h|^{-d-\alpha}$ for $h \in \R^d, h \ne 0$, the space $H_{\nu} (\Omega)$ equals the classical Sobolev-Slobodeckij space $H^{\alpha/2}(\Omega)$. For the same choice of $\nu$ we define $V^{\alpha/2} (\Omega|\mathbb{R}^d)$ as the space $V_{\nu} (\Omega|\mathbb{R}^d)$. Our first main result concerns the density of smooth functions in $\VnuOm$. Its rather technical proof is provided in \autoref{sec:spaces}.

\begin{theorem}\label{thm:density}
	Assume $\Omega\subset \mathbb{R}^d$ is open, bounded and $\partial \Omega$  is Lipschitz continuous. Let $\nu$ be as above. Then $C_c^\infty(\mathbb{R}^d)$ is dense in $V_{\nu}(\Omega|\mathbb{R}^d)$ with respect to the two norms mentioned above, i.e. for $u \in V_{\nu}(\Omega|\mathbb{R}^d)$ there exists a sequence $(u_n) \subset C_c^\infty(\mathbb{R}^d)$ with 
	\begin{align*}
	\|u_n-u\|_{V_{\nu}(\Omega|\mathbb{R}^d)}  \longrightarrow 0 \text{ as } n\to\infty \,.
	\end{align*} 
	Obviously, the convergence $\vertiii{u_n-u}_{V_{\nu}(\Omega|\mathbb{R}^d)} \to 0$ follows.
\end{theorem}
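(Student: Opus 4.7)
The plan is the classical strategy of truncation followed by mollification; the Lipschitz hypothesis enters only at the boundary layer in the second step. Pick $\eta_R\in C_c^\infty(\R^d)$ with $\eta_R\equiv 1$ on $B_R\supset\bar\Omega$ and $\eta_R\equiv 0$ outside $B_{2R}$, and set $u_R:=u\eta_R$. The error $g:=u-u_R$ vanishes on $\Omega$, so by symmetry
\[
[g]_{V_\nu(\Omega|\R^d)}^2 \;=\; 2\int_{B_R^c} u(y)^2 (1-\eta_R(y))^2 \left(\int_\Omega \nu(x-y)\,\d x\right) \d y.
\]
For $R$ large and $y\in B_R^c$, the inner integral is dominated by $\int_{|h|\geq R-\diam\Omega}\nu(h)\,\d h$, which tends to $0$ as $R\to\infty$ by the Lévy integrability of $\nu$; combined with $u\in L^2(\R^d)$ this gives $[g]_{V_\nu}\to 0$. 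The $L^2(\R^d)$ part is immediate, so it suffices to approximate compactly supported $u$.

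\textbf{Mollification via a Lipschitz atlas.} Take a smooth partition of unity $\{\psi_i\}$ subordinate to (a) an interior set $\{\dist(\cdot,\partial\Omega)>\delta_0\}\cap\Omega$, (b) an exterior set $\{\dist(\cdot,\bar\Omega)>\delta_0\}$, and (c) finitely many boundary charts $U_i$ in which, after a rigid motion, $\Omega\cap U_i=\{x_d>\gamma_i(x')\}$ for a Lipschitz $\gamma_i$. Multiplication by a compactly supported Lipschitz function preserves $V_\nu(\Omega|\R^d)$, via the product-rule identity $(\psi u)(x)-(\psi u)(y)=\psi(x)(u(x)-u(y))+u(y)(\psi(x)-\psi(y))$ and the bound $|\psi(x)-\psi(y)|^2\le C(1\wedge|x-y|^2)$ together with $\int(1\wedge|h|^2)\nu(h)\,\d h<\infty$. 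Pieces $\psi_i u$ supported at positive distance from $\partial\Omega$ are handled by standard convolution $\phi_\eps\ast(\psi_i u)$ at small $\eps$; convergence in the $V_\nu$-norm reduces to convergence in $H_\nu(\R^d)$, which follows from Plancherel plus dominated convergence. For boundary pieces, I would use a diagonal scheme: in the local chart, first translate by $te_d$, then mollify at scale $\eps\ll t$, and send $\eps\to 0$ followed by $t\to 0$, producing $C_c^\infty(\R^d)$-approximants.

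\textbf{Main obstacle.} The crucial estimate is the translation continuity
\[
\bigl[(\psi_i u)(\,\cdot\,+te_d)-\psi_i u\bigr]_{V_\nu(\Omega|\R^d)} \longrightarrow 0 \quad\text{as } t\to 0^+,
\]
which is not immediate because $(\Omega^c\times\Omega^c)^c$ is not translation-invariant and no Fourier argument applies. The idea is to compare the integration region with its shift $((\Omega-te_d)^c\times(\Omega-te_d)^c)^c$: the two agree outside the set where at least one of $x,y$ lies in the slab $(\Omega\triangle(\Omega-te_d))\cap U_i$, whose Lebesgue measure is $O(t)$ by the Lipschitz graph property. The slab contribution to the energy is then controlled by combining the $L^2$-mass of $u$ on the slab with $\int(1\wedge|h|^2)\nu(h)\,\d h$, both of which give $o(1)$ as $t\to 0$. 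Once this is established, Jensen's inequality together with the translation change of variables yields $[\phi_\eps\ast(\psi_i u)-\psi_i u]_{V_\nu}\to 0$, completing the approximation.
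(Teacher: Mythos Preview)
Your overall strategy---truncation, partition of unity subordinate to a Lipschitz atlas, and inward translation plus mollification at the boundary---matches the paper's structure, and your treatment of the interior/exterior pieces and of the truncation is essentially correct. The gap is in the boundary step, specifically in your argument for the translation continuity
\[
\bigl[(\psi_i u)(\,\cdot\,+te_d)-\psi_i u\bigr]_{V_\nu(\Omega|\R^d)}\longrightarrow 0.
\]

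The bound you propose for the ``slab contribution''---$L^2$-mass of $u$ on the slab times $\int(1\wedge|h|^2)\nu(h)\,\d h$---is false for singular $\nu$. The slab energy is $\int_{S_t}\int_{\R^d}(v(x)-v(y))^2\nu(x-y)\,\d y\,\d x$ with $v=\psi_i u$, and for $\nu(h)\asymp|h|^{-d-\alpha}$ this is a genuinely fractional quantity not controlled by $\|v\|_{L^2(S_t)}$; the factor $1\wedge|h|^2$ would only appear if $v$ were Lipschitz, which it is not. (What \emph{does} hold, provided the translation is taken inward so that $S_t\subset\Omega$, is that the slab energy is the integral over $S_t$ of the $L^1(\Omega)$ function $x\mapsto\int_{\R^d}(v(x)-v(y))^2\nu(x-y)\,\d y$, hence $o(1)$ by absolute continuity---but that is a different mechanism from the one you invoke.) More seriously, even a correct slab estimate only controls the \emph{symmetric difference of the integration regions}; it does not explain why the integrand $|F(\cdot+te_d,\cdot+te_d)-F|^2\nu$ goes to zero in $L^1$ on the common region. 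Matching domains is not the same as matching integrands, and pointwise a.e.\ convergence of translates fails for general $L^2$ functions, so some uniform-integrability mechanism is still missing from your sketch.

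The paper closes this gap differently. It combines translation and mollification at a \emph{single} scale: with $\tau>1+k$ (one plus the Lipschitz constant of the chart) it sets $u_\eps(x)=u(x+\tau\eps e_d)$ and $v_\eps=\eta_\eps*u_\eps$. The key geometric fact is the one-sided containment
\[
\bigl(\Omega+\eps(\tau e_d-z)\bigr)\cap B_{r/2}(x_0)\subset\Omega\cap B_r(x_0)\qquad\text{for every }z\in B_1(0),
\]
i.e.\ the shifted inner variable never leaves $\Omega$. This transfers the equiintegrability and tightness of $(x,y)\mapsto(u(x)-u(y))^2\nu(x-y)$ on $\Omega\times\R^d$---which come for free from $u\in V_\nu(\Omega|\R^d)$---\emph{uniformly} in $\eps$ and $z$ to the shifted functions $u^z_\eps$. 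Jensen's inequality then reduces $[v_\eps-u]_{V_\nu}^2$ to an average over $z\in B_1(0)$ of $[u^z_\eps-u]_{V_\nu}^2$, and Vitali's convergence theorem (not dominated convergence, and not a domain-comparison argument) finishes the job. The missing ingredient in your plan is precisely this uniform equiintegrability/Vitali step; the slab observation alone cannot replace it.
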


\noindent Next, let us explain for which sequences of nonlocal quadratic forms we can prove convergence to a classical local gradient form.  

\begin{definition}\label{def:nu-alpha} Let $(\rho_\varepsilon)_{0<\varepsilon<2}$ be a family of radial functions approximating the Dirac measure at the origin. We assume that every $\eps,\delta > 0$
	\begin{align*}
	\begin{split}
	\rho_\varepsilon\geq 0, \quad \int_{\mathbb{R}^d}\rho_\varepsilon (x)\d x=1, \quad \lim_{\varepsilon\to 0^+}\int_{|x|>\delta}\rho_\varepsilon (x)\d x=0\,.
	\end{split}
	\end{align*}
Moreover, we assume that $h \mapsto |h|^{-2}\rho_\eps(h)$ is almost decreasing, i.e., for some $c \geq 1$ and all $x,y$ with $|x| \leq |y|$ we have $|y|^{-2} \rho_\eps(y) \leq c \, |x|^{-2} \rho_\eps(x)$. Given a sequence $(\rho_\eps)_{0<\varepsilon<2}$ with the aforementioned properties, we define a sequence $(\nu^\alpha)_{0<\alpha<2}$ of functions $\nu^\alpha:\R^d \setminus\{0\} \to \R$ by  $\nu^\alpha (h) = |h|^{-2} \rho_{2-\alpha}(h)$. This sequence is used to set up function spaces below.  
\end{definition}

\begin{example}\label{ex:most-important}
For $\rho_\eps (h) = \frac{\eps}{\omega_{d-1}} |h|^{-d+\eps} \mathbbm{1}_{B_1}(h)$ we obtain $\nu^\alpha (h) = \frac{2-\alpha}{\omega_{d-1}} |h|^{-d-\alpha} \mathbbm{1}_{B_1}(h)$ and $H_{\nu^\alpha}(\Omega) = H^{\alpha/2}(\Omega)$. Note that there is no sequence $(\rho_\eps)$ satisfying the conditions above, for which $\nu^\alpha (h) = C_{d,\alpha} |h|^{-d-\alpha}$ for all $h$. One would need to relax the integrability condition on $\rho_\eps$. Consequently, the vector spaces $V_{\nu^\alpha}(\Omega|\R^d)$ and $V^{\alpha/2} (\Omega|\mathbb{R}^d)$ do not coincide. However, the normed space $(V_{\nu^\alpha}(\Omega|\R^d),\|\cdot\|_{V_{\nu^\alpha} (\Omega|\mathbb{R}^d)})$ is equivalent to the normed space $(V^{\alpha/2} (\Omega|\mathbb{R}^d),\|\cdot\|_{V^{\alpha/2} (\Omega|\mathbb{R}^d)})$, where
\[\|u\|^2_{V^{\alpha/2} (\Omega|\mathbb{R}^d)} := \|u\|^2_{L^{2} (\R^d)}+  \iil_{(\Omega^c\times \Omega^c)^c} \frac{\big(u(x)-u(y) \big)^2}{\bet{x-y}^{d+\alpha}}  \mathrm{d}x \, \mathrm{d}y \,.\]
\end{example}	

\begin{example}\label{ex:bounded-nu}
As the following example shows, $(\nu^\alpha)$ can be a sequence of bounded functions. For $\eps \in (0,2)$ define $\rho_\eps$ by 
\begin{align}\label{eq:def-rho-bd-nu}
\rho_\varepsilon(h) = \frac{d+2}{ \omega_{d-1}\varepsilon^{d+2}} |h|^{2}\mathbbm{1}_{B_{\varepsilon}}(h) \qquad (h \in \R^d)\,.
\end{align} 
Define $\nu^{\alpha}$ for $\alpha \in (0,2)$ as in \autoref{def:nu-alpha}. Then for every $\alpha \in (0,2)$ $H_{\nu^\alpha}(\Omega)$ is equivalent to $L^{2}(\Omega)$ and $(V_{\nu^\alpha}(\Omega|\R^d),\|\cdot\|_{V_{\nu^\alpha} (\Omega|\mathbb{R}^d)})$ is equivalent to $L^{2}(\R^d)$. Note that these equivalences are not uniform in $\alpha$. 
\end{example}

\noindent Note that each function $\nu^\alpha$ determines a symmetric unimodal  L\'{e}vy measure, i.e., it is a radially almost decreasing function and $\min(1, |h|^2) \in L^1(\R^d, \nu^\alpha(h) \d h)$. Next, let us introduce the nonlocal bilinear forms under consideration. We recall that $\Omega \subset \mathbb{R}^d $ is an open bounded set. Given $\alpha \in (0,2)$, $J^\alpha: \mathbb{R}^d\times \mathbb{R}^d \setminus \operatorname{diag} \to [0, \infty]$ and sufficiently smooth functions $u,v: \mathbb{R}^d\to \mathbb{R}$, we define 
\begin{align}
\mathcal{E}^{\alpha}_{\Omega}(u,v) &=  \iil_{\Omega \Omega} \big(u(y)-u(x)\big) \big(v(y)-v(x)\big) J^\alpha(x,y)\d x \, \d y\,, \label{eq-inner-form} \\
\mathcal{E}^{\alpha}(u,v) &= \iil_{(\Omega^c\times \Omega^c)^c} \big(u(y)-u(x)\big) \big(v(y)-v(x) J^\alpha(x,y) \d x \, \d y, \label{eq-ext-form}
\end{align}

\noindent In the sequel we will not introduce a separate notation for the quadratic forms $u \mapsto \mathcal{E}^{\alpha}_{\Omega}(u,u)$ and $u \mapsto \mathcal{E}^{\alpha}(u,u)$. Note that $(\Omega^c\times \Omega^c)^c$ equals $(\Omega\times \Omega)  \cup (\Omega\times \Omega^c) \cup  (\Omega^c\times \Omega)$. We assume that $(J^\alpha)_{0<\alpha<2}$ is a sequence of  positive symmetric kernels $J^\alpha: \mathbb{R}^d\times \mathbb{R}^d \setminus \operatorname{diag} \to [0, \infty]$ satisfying the following conditions:

\medskip

\begin{itemize}
	\item[(E)] There exists a constant $\Lambda\geq 1$ such that for every $\alpha\in  (0,2)$ and all $x,y \in \mathbb{R}^d$, $x \ne y$, with $|x-y|\leq 1$ 
\begin{align}\label{eq:elliptic-condition}\tag{$E$}
\Lambda^{-1} \nu^\alpha (x-y) \leq J^\alpha(x,y) &\leq  \Lambda \nu^\alpha (x-y) 
\end{align}
	\item[(L)] For every $\delta >0$
 \begin{align} \label{eq:integrability-condition}\tag{$L$}
\lim_{\alpha \to 2^-}\sup_{x\in \mathbb{R}^d} \int_{|h| > \delta} J^\alpha(x,x+h)dh=0.
\end{align}
\end{itemize}%

\noindent Finally, let us define the limit object, which is a local quadratic form of gradient type. Given $x \in \R^d$ and $\delta > 0$, we define the symmetric matrix $A(x) = (a_{ij}(x))_{1\leq i,j\leq d}$ by
\begin{align}\label{eq:coef-matrix}
a_{ij}(x) = \lim_{\alpha\to 2^{-}} \int_{B_\delta}  h_ih_j J^\alpha(x,x+h)dh
\end{align}
and for $u,v \in H^{1}(\Omega)$ the corresponding bilinear form by 
\begin{align*}
\mathcal{E}^A(u,v):=  \il_{\Omega} \big( A(x)\nabla u(x), \nabla v(x) \big)  \d x \,. 
\end{align*}

\noindent Conditions \eqref{eq:elliptic-condition} and \eqref{eq:integrability-condition} are sufficient in order to show convergence results similar to \eqref{eq:new-frac-forms_to_gradient-form} and \eqref{eq:bounded-frac-forms_to_gradient-form}, see \autoref{thm:quadratic-convergence-BBM}. As we will see in \autoref{prop:elliptic-matrix}, conditions \eqref{eq:elliptic-condition} and \eqref{eq:integrability-condition} ensure that the symmetric matrices $A(\cdot)$ defined in  \eqref{eq:coef-matrix} are  uniformly positive definite and bounded. For our main result, \autoref{thm:Mosco-convergence}, we impose translation invariance of the kernels:
\begin{itemize}
\item[(I)] For each $\alpha \in (0,2)$ the kernel $J^\alpha$ is translation invariant, i.e., for every $h \in \mathbb{R}^d$
\begin{align}\label{eq:translation-invariance}\tag{I}
J^\alpha(x+h, y+h) = J^\alpha(x, y) 
\end{align}
\end{itemize}

\medskip

\begin{remark}
	(i) Under conditions \eqref{eq:elliptic-condition} and \eqref{eq:integrability-condition} the expression $\int_{B_\delta}  h_ih_j J^{\alpha_n}(x,x+h)dx$ converges for a suitable subsequence of $(\alpha_n)$. The existence of the limit in \eqref{eq:coef-matrix} poses an implicit condition on the family $(J^\alpha)$. (ii) \eqref{eq:elliptic-condition} and \eqref{eq:integrability-condition} ensure that the quantity $a_{ij}(x)$  does not  depend on the choice of $\delta$ and is bounded as a function in $x$. (iii) Under condition \eqref{eq:translation-invariance} the functions $a_{ij}(x)$ are constant in $x$. 
\end{remark}

\noindent Let us formulate our second main result.

\begin{theorem}\label{thm:Mosco-convergence}
	Let $\Omega\subset \mathbb{R}^d$ be an open bounded set with a Lipschitz continuous boundary. Assume \eqref{eq:elliptic-condition}, \eqref{eq:integrability-condition} and \eqref{eq:translation-invariance}.
	Then the two families of  quadratic forms $(\mathcal{E}^\alpha_{\Omega}(\cdot, \cdot), H_{\nu^\alpha}( \Omega) )_{\alpha}$  and  
	$( \mathcal{E}^\alpha(\cdot, \cdot),V_{\nu^\alpha}( \Omega|\mathbb{R}^d) )_{\alpha}$ both converge to  $( \mathcal{E}^A(\cdot,\cdot), H^{1}( \Omega) )$ in the Mosco sense in $L^2(\Omega)$ as $\alpha\to 2^-$.
\end{theorem}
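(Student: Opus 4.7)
The plan is to verify the two conditions defining Mosco convergence in $L^2(\Omega)$: the liminf inequality, and the existence of a recovery sequence. Throughout, the forms are extended by $+\infty$ outside their natural domains $H_{\nu^\alpha}(\Omega)$, $V_{\nu^\alpha}(\Omega|\R^d)$, and $H^1(\Omega)$. The translation-invariance assumption \eqref{eq:translation-invariance} is used only to ensure that the matrix $A$ in \eqref{eq:coef-matrix} is constant, so that $\mathcal{E}^A$ is a bona fide constant-coefficient Dirichlet form on $H^1(\Omega)$.

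For the recovery condition, given $u \in H^1(\Omega)$, I propose the constant sequence $u_\alpha \equiv u$ for the inner form and $u_\alpha \equiv \tilde{u}$ for the exterior form, where $\tilde{u} \in H^1(\R^d)$ is obtained from the Lipschitz-extension theorem. When $u \in C_c^\infty(\R^d)$, the identity $\mathcal{E}^\alpha(u,u) \to \mathcal{E}^A(u,u)$ is the BBM-type statement of \autoref{thm:quadratic-convergence-BBM}. For general $u \in H^1(\Omega)$, I approximate $\tilde{u}$ in $H^1(\R^d)$ by a sequence $(\phi_k) \subset C_c^\infty(\R^d)$ (using \autoref{thm:density} to justify smooth approximation of elements of $V_{\nu^\alpha}(\Omega|\R^d)$), and then extract a diagonal sequence $u_\alpha := \phi_{k(\alpha)}$. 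The closure of the diagonal argument relies on the $\alpha$-uniform upper estimate $\mathcal{E}^\alpha(\phi,\phi) \leq C \|\phi\|_{H^1(\R^d)}^2$ for $\phi \in C_c^\infty(\R^d)$, which is an immediate consequence of condition \eqref{eq:elliptic-condition}.

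For the liminf inequality, take $u_\alpha \to u$ strongly in $L^2(\Omega)$ and set $L := \liminf_{\alpha \to 2^-} \mathcal{E}^\alpha(u_\alpha, u_\alpha)$; it suffices to treat $L < \infty$. For any test function $\phi$ in $C_c^\infty(\Omega)$ or $C_c^\infty(\R^d)$ as appropriate, positivity of the quadratic form yields
\[
\mathcal{E}^\alpha(u_\alpha, u_\alpha) \geq 2\,\mathcal{E}^\alpha(u_\alpha, \phi) - \mathcal{E}^\alpha(\phi, \phi).
\]
The two convergences $\mathcal{E}^\alpha(\phi, \phi) \to \mathcal{E}^A(\phi, \phi)$ (again \autoref{thm:quadratic-convergence-BBM}) and, crucially, $\mathcal{E}^\alpha(u_\alpha, \phi) \to \mathcal{E}^A(u, \phi)$ pass to the liminf and give $L \geq 2\mathcal{E}^A(u, \phi) - \mathcal{E}^A(\phi, \phi)$ for every admissible $\phi$. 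Since $A$ is uniformly elliptic by \autoref{prop:elliptic-matrix}, taking the supremum over smooth $\phi$ and invoking the standard variational characterization of the local Dirichlet form (together with density of smooth functions in $H^1(\Omega)$) forces $u \in H^1(\Omega)$ with $\mathcal{E}^A(u,u) \leq L$.

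The main obstacle lies in justifying the key limit $\mathcal{E}^\alpha(u_\alpha, \phi) \to \mathcal{E}^A(u, \phi)$. Via nonlocal integration by parts this reduces to
\[
\il_{\R^d} u_\alpha(x)\, \mathcal{L}^\alpha \phi(x)\, \d x \;\longrightarrow\; \il_{\R^d} u(x)\,\bigl(-\diverg(A \nabla \phi)(x)\bigr)\, \d x,
\]
where $\mathcal{L}^\alpha$ is the nonlocal operator with kernel $J^\alpha$. The reduction hinges on the uniform convergence $\mathcal{L}^\alpha \phi \to -\diverg(A \nabla \phi)$ on $\R^d$. Splitting the jump integral at radius $\delta$, condition \eqref{eq:integrability-condition} annihilates the tail uniformly in $x$, while a second-order Taylor expansion of $\phi$ identifies the small-jump contribution with $-\sum_{i,j} a_{ij}\,\partial_i\partial_j \phi$ via the very definition \eqref{eq:coef-matrix} of $a_{ij}$. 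Odd-order Taylor terms drop by symmetry of $J^\alpha$, and \eqref{eq:translation-invariance} makes $A$ constant so that the sum equals $-\diverg(A \nabla \phi)$. Combined with strong $L^2$-convergence of $u_\alpha$, this closes the argument.
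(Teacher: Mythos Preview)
Your recovery-sequence argument is essentially the same as the paper's and is fine. The paper also takes the constant sequence $u_\alpha\equiv u$ (after extending $u\in H^1(\Omega)$ to $H^1(\R^d)$); the uniform bound $\mathcal{E}^\alpha(\phi,\phi)\leq C\|\phi\|_{H^1}^2$ is exactly \autoref{lem-cont-qua-form}, and the convergence on smooth functions is \autoref{thm:quadratic-convergence-BBM}.

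Your liminf argument, however, has a genuine gap tied to the \emph{Neumann} nature of the limiting form $(\mathcal{E}^A,H^1(\Omega))$. The duality trick
\[
\mathcal{E}^\alpha(u_\alpha,u_\alpha)\;\geq\;2\,\mathcal{E}^\alpha(u_\alpha,\phi)-\mathcal{E}^\alpha(\phi,\phi)
\]
together with $\mathcal{E}^\alpha(u_\alpha,\phi)\to\mathcal{E}^A(u,\phi)$ only works cleanly when $\phi\in C_c^\infty(\Omega)$: then $\operatorname{supp}\phi\Subset\Omega$, the nonlocal operator $\mathcal{L}^\alpha_\Omega\phi(x)=\int_\Omega(\phi(x)-\phi(y))J^\alpha(x,y)\,\d y$ is uniformly bounded on $\Omega$, odd Taylor terms cancel, and $2\mathcal{L}^\alpha_\Omega\phi\to -\operatorname{div}(A\nabla\phi)$ strongly in $L^2(\Omega)$ so that weak convergence of $u_\alpha$ suffices. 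But the supremum over $\phi\in C_c^\infty(\Omega)$ of $2\mathcal{E}^A(u,\phi)-\mathcal{E}^A(\phi,\phi)$ is \emph{not} $\mathcal{E}^A(u,u)$; it equals $\mathcal{E}^A(u,u)-\inf_{\psi\in H_0^1(\Omega)}\mathcal{E}^A(u-\psi,u-\psi)$, the energy of the $A$-harmonic projection of $u$ onto $H_0^1(\Omega)$, which is strictly smaller whenever $u\notin H_0^1(\Omega)$. Your appeal to ``density of smooth functions in $H^1(\Omega)$'' would require test functions $\phi\in C_c^\infty(\R^d)$ whose restrictions are dense in $H^1(\Omega)$, but for such $\phi$ the integration-by-parts breaks down: for the inner form, $\mathcal{L}^\alpha_\Omega\phi$ is not even in $L^2(\Omega)$ near $\partial\Omega$ when $\alpha$ is close to $2$ (the first-order Taylor term no longer cancels since $\Omega$ is not symmetric about boundary points); for the exterior form, the integration by parts produces a nonlocal Neumann term $\int_{\Omega^c}u_\alpha(y)\int_\Omega(\phi(y)-\phi(x))J^\alpha\,\d x\,\d y$ which involves $u_\alpha|_{\Omega^c}$, uncontrolled by weak $L^2(\Omega)$ convergence. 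Your argument as written proves \autoref{thm:Mosco-convergence-bis} (limit form on $H_0^1(\Omega)$), not \autoref{thm:Mosco-convergence}.

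The paper circumvents this boundary issue entirely. It first invokes the BBM/Ponce compactness theorem to upgrade weak to strong $L^2(\Omega)$ convergence and to place $u$ in $H^1(\Omega)$ when $\liminf<\infty$. Then \autoref{lem-liminf} mollifies: for $u^\delta_n=u_n\ast\phi_\delta$ on a shrunken domain $\Omega_\delta$, translation invariance \eqref{eq:translation-invariance} gives $\mathcal{E}^{\alpha_n}_{\Omega_\delta}(u_n^\delta,u_n^\delta)\leq\mathcal{E}^{\alpha_n}_\Omega(u_n,u_n)$ via Jensen, and since $u_n^\delta\to u^\delta$ in $H^1(\Omega_\delta)$ one can pass to the limit via \autoref{thm:quadratic-convergence-BBM} and then let $\delta\to0$. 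Note in particular that your claim that \eqref{eq:translation-invariance} is ``used only to ensure that $A$ is constant'' understates its role: in the paper it is the key structural ingredient that makes the mollification inequality work.

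Two minor points: Mosco's liminf condition is stated for \emph{weakly} convergent sequences, not strong ones (though, as noted, your operator-convergence argument would survive this); and you never establish $u\in H^1(\Omega)$ when $L<\infty$ --- the variational sup over $C_c^\infty(\Omega)$ being finite gives at best $u\in H^1_{\mathrm{loc}}(\Omega)$.
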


\medskip

A stronger version of \autoref{thm:Mosco-convergence} not assuming condition \eqref{eq:translation-invariance} will be proved elsewhere, see also \cite{Voi17}. We refer the reader to \autoref{def:mosco} for details about the Mosco convergence of bilinear forms. Note that \autoref{thm:quadratic-convergence-BBM}, which is part of the proof of \autoref{thm:Mosco-convergence}, implies the convergence results \eqref{eq:frac-forms_to_gradient-form}, \eqref{eq:new-frac-forms_to_gradient-form} and \eqref{eq:bounded-frac-forms_to_gradient-form} for fixed functions $u$.

\medskip

Let us discuss the assumption on the family $(J^\alpha)_\alpha$ and provide some examples. Condition \eqref{eq:elliptic-condition} is a sufficient condition for what can be seen as nonlocal version of the classical ellipticity condition for second order operators in divergence form. Condition \eqref{eq:integrability-condition} ensures that long-range interactions encoded by $J^\alpha(x,y)$ vanish as $\alpha\to 2^-$. As a result, for some $\alpha_0\in (0,2)$, the quantity  
\begin{align} \label{eq:consequence-integrability}
\kappa_0 = \sup_{\alpha \in(\alpha_0,2)}\sup_{x\in \mathbb{R}^d} \int_{|h|>1} J^\alpha(x,x+h)dh
\end{align}
is finite. One can easily check that conditions \eqref{eq:elliptic-condition} and \eqref{eq:integrability-condition} imply the following uniform L\'evy integrability type property:
\begin{align*}
\sup_{\alpha \in(\alpha_0,2)}\sup_{x\in \mathbb{R}^d} \int_{\mathbb{R}^d} (1\land |h|^2)J^\alpha(x,x+h) \d h<\infty \,.
\end{align*}

\begin{example}\label{ex:J-guys-singular} For $\eps > 0$ set $\rho_{\eps}(h) = \frac{\eps}{\omega_{d-1}} |h|^{-d+\eps}\mathbbm{1}_{B_1}(h)$. Define $\nu^{\alpha}$ for $\alpha \in (0,2)$ as in \autoref{def:nu-alpha}. Then conditions \eqref{eq:elliptic-condition}, \eqref{eq:integrability-condition} and \eqref{eq:translation-invariance} are fulfilled for each of the following cases and $\beta>0$:
	\begin{align*}
	J_1^{\alpha}(x,y) &= C_{d, \alpha} |x-y|^{-d-\alpha} \,, \\
	J_2^{\alpha}(x,y) &=  C_{d, \alpha} |x-y|^{-d-\alpha}\mathbbm{1}_{B_1}(x-y)+ (2-\alpha)|x-y|^{-d-\beta} \mathbbm{1}_{\mathbb{R}^d\setminus B_1}(x-y)\,, \\
	J_3^{\alpha}(x,y) &=  C_{d, \alpha} |x-y|^{-d-\alpha}\mathbbm{1}_{B_1}(x-y)+ (2-\alpha)J(x,y) \mathbbm{1}_{\mathbb{R}^d\setminus B_1}(x-y) \,,
	\end{align*}
	where $J$ is a symmetric function satisfying $\sup_{x\in \mathbb{R}^d} \int_{\mathbb{R}^d\setminus B_\delta} J(x,x+h)dh<\infty$  for every $\delta >0$. Regarding \autoref{thm:Mosco-convergence}, in the cases   $J_1^\alpha$, $J_2^\alpha$ and $J_3^\alpha$, one obtains $A(x) = (\delta_{ij})_{1\leq i,j\leq d}$, i.e., the matrix $A$ equals the identity matrix.   
	
\end{example}

\begin{example}
In \autoref{ex:J-guys-singular} we provide examples of singular kernels $J$. As we explain above, \autoref{thm:Mosco-convergence} applies to bounded kernels, too. Here is one example. For $\eps \in (0,2)$ define $\rho_\eps$ as in \eqref{eq:def-rho-bd-nu}. Define $\nu^{\alpha}$ for $\alpha \in (0,2)$ as in \autoref{def:nu-alpha}. Then conditions \eqref{eq:elliptic-condition}, \eqref{eq:integrability-condition} and \eqref{eq:translation-invariance} are fulfilled for $J_4^{\alpha}(x,y) = \frac{1}{(2-\alpha)^{d+2}}\mathbbm{1}_{B_{2-\alpha}}(x-y)$. As in the cases above, in the case $J_4$ one obtains $A(x) = (\delta_{ij})_{1\leq i,j\leq d}$. We refer the reader to \autoref{sec:examples} for more examples. 
\end{example}

\medskip

Let us relate our result to other works. We study \autoref{thm:density} as a tool needed for the proof of \autoref{thm:Mosco-convergence}. However, the density result itself is of importance for the study of nonlocal problems in bounded domains. We refer the reader to \cite{DyKa18, BGPR17, KaWa18} for recent results involving function spaces of the type of $\VnuOm$.

\medskip

\autoref{thm:Mosco-convergence} is closely related to the weak  convergence of the finite-dimensional distributions  of stochastic processes. Since both quadratic forms, $(\mathcal{E}^\alpha_{\Omega}(\cdot, \cdot), H_{\nu^\alpha}( \Omega) )_{\alpha}$  and  \linebreak
$( \mathcal{E}^\alpha(\cdot, \cdot),V_{\nu^\alpha}( \Omega|\mathbb{R}^d) )_{\alpha}$ turn out to be regular Dirichlet forms, cf. \autoref{cor:regular-DF}, they correspond to L\'evy processes. In dependence of the choice of $\nu^\alpha$, the L\'evy measure has finite mass or not. \autoref{thm:Mosco-convergence} implies that the distributions of these processes converge weakly to the distribution of a diffusion process defined by the Dirichlet form $(\mathcal{E}^A(\cdot,\cdot), H^{1}( \Omega) )$. In \cite{Mos94} (see also \cite{KS03}) it is shown that Mosco convergence of a sequence of symmetric closed forms is equivalent to the 
convergence of the sequence of associated semigroups (or of associated resolvents) and implies the weak convergence the finite-dimensional distributions of the corresponding processes if any.  Note that several authors have studied the weak convergence of Markov processes with the help of Dirichlet forms, e.g., in \cite{LyZh96, KuUe97, MRZ98, Sun98, Kol05, Kol06, BBCK09, CKK13}. Most of related results are concerned with situations where the type of the process does not change, i.e., diffusions converge to a diffusion or jump processes converge to a jump process. In the present work, we consider examples where a sequence of jump processes in bounded domains converges to a diffusion. This will appear implicitly as  consequence of the  Mosco convergence in \autoref{thm:Mosco-convergence}. 

\medskip

The Dirichlet form $(\mathcal{E}^\alpha_{\Omega}(\cdot, \cdot), H_{\nu^\alpha}( \Omega) )_{\alpha}$ has appeared in the analysis literature for decades. When $\nu^\alpha$ is singular, then it arises naturally through the norms of Sobolev-Slobodeckij spaces introduced by Aronszajn, Gagliardo and Slobodeckij. The regular Dirichlet form generates a censored jump process, which is introduced and thoroughly studied in \cite{BBC03}. Jumps from $\Omega$ into $\R^d \setminus \Omega$ are erased from the underlying free jump process. The stochastic process is restarted each time such a jump occurs.  The situation is very different for the Dirichlet form  $( \mathcal{E}^\alpha(\cdot, \cdot),V_{\nu^\alpha}( \Omega|\mathbb{R}^d) )_{\alpha}$. It appears in \cite{DRV17} in connection with the study of nonlocal problems with Neumann-type conditions, see also \cite{LMPPS18}. The function space $\VnuOm$ is central for the Hilbert space approach to complement value problems with Dirichlet data in \cite{FKV15}. The article \cite{DRV17} offers some probabilistic interpretation but a mathematical study of the corresponding stochastic process seems not to be available yet. The authors have been informed that, in an ongoing project Z. Vondracek addresses the probabilistic interpretation of  quadratic forms including examples like $( \mathcal{E}^\alpha(\cdot, \cdot),V_{\nu^\alpha}( \Omega|\mathbb{R}^d) )_{\alpha}$. Of course, reflections of jump processes have been studied for a long time, e.g. in \cite{MeRo85}. 

\medskip

In the case of bounded jump measures $\nu^\alpha$ the works on so-called nonlocal diffusion equations study similar problems, cf. \cite{CERW07, AMRT10}. Bounded kernels also appear in the study of peridyamics. Neumann boundary conditions have recently been studied in this context in \cite{AC17, TTD17}. Last, let us mention that integro-differential operators have been considered by several authors with nonlocal Neumann conditions in the framework of strong solutions or viscosity solutions, cf. \cite{GaMe02, BCGJ14}. 

\medskip

The paper is organized as follows. In \autoref{sec:spaces} we study the function spaces $\VnuOm$ in detail. In particular, we prove that the subspace $C^\infty_c(\R^d)$ is dense in $V_{\nu}(\Omega|\mathbb{R}^d)$. \autoref{sec:main-proof} is devoted to the proof of \autoref{thm:Mosco-convergence}. 

\bigskip

\emph{Acknowledgement:} The authors thank Vanja Wagner (Zagreb) for helpful discussions on the proof of \autoref{thm:density}.

\section{Density of smooth functions}\label{sec:spaces}

The aim of this section is to prove \autoref{thm:density}. Let us recall the corresponding setup. $\Omega$ is a bounded open subset of $\R^d$ with a Lipschitz boundary. The function $\nu:\R^d\setminus \{0\} \to [0, \infty)$ is radial and satisfies $\nu \in L^1(\R^d, (1 \wedge |h|^2) \d h)$. Moreover, it is almost decreasing, i.e., there is $c \geq 0$ such that $|y| \geq |x|$ implies $\nu(y) \leq c \nu(x)$. The space $\VnuOm$ is defined as above.    

\medskip

First, let us explain why, for certain choices of $\nu$, it is natural to consider the norm $\vertiii{\cdot}_{\VnuOm}$ on the space $\VnuOm$.

\begin{proposition}\label{prop:natural-norm-on-V}
Assume $\nu $ is given as above. 

\smallskip

\noindent (a) If  $\Omega\subset B_{|\xi|/2}(0) $   for some $ \xi \in \mathbb{R}^d$ with $\nu(\xi)\neq 0$.  Then $\VnuOm \subset L^2(\Omega)$.\\
(b) Assume $\nu$ is positive on  sets of positive measure, i.e. $\nu$ has full support. Then there exists another  almost decreasing radial measure $\widetilde{\nu}: \mathbb{R}^d \to [0,\infty) $ and  a constant $C>0$  both depending only on $\nu, d$ and $\Omega$ such that

\begin{enumerate}[(i)]
	\item $\widetilde{\nu}(\mathbb{R}^d)<\infty$\,,
	\item $0\leq \widetilde{\nu} \leq C(1\land \nu)$\,,
	\item $\VnuOm \subset L^2(\R^d,\widetilde{\nu}(h)\d h) \subset L^1(\R^d,\widetilde{\nu}(h)\d h)$\,,
	\item on $\VnuOm$, the  norms $\vertiii{\cdot}_{\VnuOm}$ and  $\vertiii{\cdot}^*_{\VnuOm}$ with 
	\begin{align*}
	\vertiii{u}^{*2}_{\VnuOm}=\int_{\mathbb{R}^d} u^2(x) \widetilde{\nu}(x)\d x+ \iil\limits_{(\Omega^c\times \Omega^c)^c} (u(x)-u(y))^2\nu(x-y)\d x\d y
	\end{align*}
	are equivalent. 
\end{enumerate}
\end{proposition}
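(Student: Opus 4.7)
The plan is to prove (a) by a short Fubini argument that extracts a good base point in $\Omega$, and to prove (b) by a single explicit radial construction of $\widetilde\nu$ that carries the geometric information of $\Omega$ into the tail of $\nu$.

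For (a), the hypothesis $\Omega\subset B_{|\xi|/2}(0)$ yields $\diam(\Omega)\le|\xi|$, so $|x-y|\le|\xi|$ for all $x,y\in\Omega$. Applying the almost decreasing property to the pair $(x-y,\xi)$ (since $|\xi|\ge|x-y|$ forces $\nu(\xi)\le c\,\nu(x-y)$) produces a uniform lower bound $\nu(x-y)\ge m:=c^{-1}\nu(\xi)>0$ on $\Omega\times\Omega$. Hence
\[m\iil_{\Omega\times\Omega}(u(x)-u(y))^2\,\d x\,\d y \le \iil_{\Omega\times\Omega}(u(x)-u(y))^2\nu(x-y)\,\d x\,\d y \le [u]^2_{\VnuOm}<\infty.\]
By Fubini there exists $x_0\in\Omega$ with $u(x_0)\in\R$ and $\int_\Omega (u(x_0)-u(y))^2\,\d y<\infty$; since $u(x_0)$ is a real constant, this yields $u\in L^2(\Omega)$.

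For (b), set $R:=\sup_{x\in\Omega}|x|\in(0,\infty)$, write $f$ for the radial profile of $\nu$ (so $\nu(h)=f(|h|)$ for $h\ne0$), and take
\[\widetilde\nu(y):=f(R+|y|),\qquad y\in\R^d.\]
Then $\widetilde\nu$ is radial and almost decreasing with the same constant as $\nu$. The core geometric estimate driving the whole argument is
\[\widetilde\nu(y)\le c\,\nu(x-y)\qquad\text{for every }x\in\Omega\text{ and every }y\in\R^d,\]
an immediate consequence of $|x-y|\le|x|+|y|\le R+|y|$ and almost decreasingness. For (i), the substitution $t=R+|y|$ bounds $\int\widetilde\nu\,\d y\le\omega_{d-1}\int_R^\infty f(t)t^{d-1}\,\d t=\int_{|h|>R}\nu(h)\,\d h$, which is finite because on $\{|h|>1\}$ we have $\nu=\min(1,|h|^2)\nu$ and on the remaining annulus $\{R<|h|\le 1\}$ (if non-empty) $\nu$ is bounded by almost decreasingness. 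For (ii), a two-case argument using almost decreasingness gives $\widetilde\nu(y)\le f(R)$ when $\nu(y)\ge 1$ and $\widetilde\nu(y)\le c\,\nu(y)$ when $\nu(y)<1$, so $\widetilde\nu\le C(1\wedge\nu)$ with $C=c\max(1,f(R))$.

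For the norm equivalence in (iv) the non-trivial direction is $\vertiii{u}^{*2}_{\VnuOm}\le K\vertiii{u}^{2}_{\VnuOm}$. I split $\int_{\R^d}u^2\widetilde\nu=\int_\Omega u^2\widetilde\nu+\int_{\Omega^c}u^2\widetilde\nu$; the interior piece is bounded by $\|\widetilde\nu\|_\infty\|u\|^2_{L^2(\Omega)}\le f(R)\|u\|^2_{L^2(\Omega)}$. For the exterior piece I use $u(y)^2\le 2(u(y)-u(x))^2+2u(x)^2$, average in $x\in\Omega$ against $|\Omega|^{-1}\mathbbm{1}_\Omega\,\d x$, multiply by $\widetilde\nu(y)$, and integrate in $y\in\Omega^c$. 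This produces one harmless term bounded by $2|\Omega|^{-1}\|\widetilde\nu\|_{L^1}\|u\|^2_{L^2(\Omega)}$ and a double integral which, by the core estimate applied on $\Omega\times\Omega^c$, is majorised by $2c|\Omega|^{-1}[u]^2_{\VnuOm}$. The opposite direction is immediate: since $\nu$ has full support, almost decreasingness gives $\widetilde\nu(y)=f(R+|y|)\ge c^{-1}f(2R)>0$ for $y\in\Omega$, so $\|u\|^2_{L^2(\Omega)}\le c f(2R)^{-1}\int u^2\widetilde\nu$. Finally (iii) falls out for free: the inclusion $L^2(\widetilde\nu)\subset L^1(\widetilde\nu)$ is Cauchy--Schwarz on the finite measure space furnished by (i), and $\VnuOm\subset L^2(\R^d,\widetilde\nu)$ follows from the first direction of (iv) together with part (a), which guarantees $\|u\|_{L^2(\Omega)}<\infty$ on $\VnuOm$ in the first place. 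The one delicate point in the whole proof is the core estimate $\widetilde\nu(y)\le c\,\nu(x-y)$: it is the inequality that pins down the right form of $\widetilde\nu$, and it is the mechanism by which the cross-integral over $\Omega\times\Omega^c$ gets absorbed into the seminorm $[u]^2_{\VnuOm}$ rather than requiring control of $u$ on all of $\R^d$.
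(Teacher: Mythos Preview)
Your proof is correct and follows essentially the same strategy as the paper's: both arguments hinge on choosing $\widetilde\nu$ so that the single geometric inequality $\widetilde\nu(y)\le c\,\nu(x-y)$ holds uniformly for $x\in\Omega$, which is exactly what allows the $\Omega\times\Omega^c$ cross term to be absorbed into the seminorm. The only differences are cosmetic: you take $\widetilde\nu(h)=f(R+|h|)$ while the paper takes $\widetilde\nu(h)=f\bigl(R(1+|h|)\bigr)$ with $R\ge 1$ (both satisfy the core estimate via $|x-y|\le R+|y|\le R(1+|y|)$), and in part (a) you extract a single good base point via Fubini whereas the paper compares $|u|$ to its mean via Jensen. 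Two small slips to tidy up: the bounds $\widetilde\nu(y)\le f(R)$ and $\|\widetilde\nu\|_\infty\le f(R)$ should carry the almost-decreasing constant $c$ (your final constant $C=c\max(1,f(R))$ is already correct); and when you invoke part (a) inside (b), it is worth saying explicitly that full support together with boundedness of $\Omega$ furnishes a $\xi$ with $|\xi|\ge 2R$ and $\nu(\xi)>0$, so the hypothesis of (a) is indeed met.
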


\begin{remark} Regarding property (ii) let us mention that in some cases like $\nu(h) = |h|^{-d-\alpha}$ it is possible to obtain $\widetilde{\nu} \asymp 1\land \nu$. In the aforementioned case one could define $\widetilde{\nu}(h) = (1+|h|)^{-d-\alpha}$ for $h \in \R^d$.
\end{remark}

\begin{proof}
	First, if  $\Omega\subset B_{|\xi|/2}(0)$, then  for all $x,y\in \Omega$ we have $\nu(x-y)\geq c'$ with $c'= c\nu(\xi)>0 $. By Jensen's inequality, we have 
	
	\begin{align*}
	\iil_{(\Omega^c\times \Omega^c)^c} \big(u(x)-u(y)\big)^2 \nu(x-y) \d x \, \d y&\geq c' \iil_{\Omega\Omega} (|u(x)|-|u(y)|)^2\d x \, \d y\\
	&\geq c'|\Omega| \il_{\Omega} \Big(|u(x)|-\hbox{$\fint_{\Omega}|u|$}\Big)^2\d x.
	\end{align*}
	This shows that the mean value  $\fint_{\Omega}|u|$ is finite. We conclude $u \in L^2(\Omega)$ because of 
	\[\int_{\Omega}u^2(x) \leq 2\int_{\Omega} \Big(|u(x)|-\hbox{$\fint_{\Omega}$}|u|\Big)^2+ 2|\Omega| \Big(\hbox{$\fint_{\Omega}|u|$}\Big)^2\,. \]
	
The proof of part (b) is similar to the proof of \cite[Proposition 13]{DyKa18}. Assume $\nu $ has full support. Since $\Omega$ is bounded, there is $R\geq 1$ large enough such that $\Omega\subset B_R(0)$. Clearly, we have $|x-y|\leq R(1+|y|)$ for all $x\in \Omega$ and all $y\in \R^d$. The monotonicity condition on $\nu$ implies $\nu(R(1+|y|))\leq c\nu(x-y) $. Set $\widetilde{\nu}(h) = \nu(R(1+|h|))$ for $h \in \R^d$, where we abuse the notation and write $\nu(|y|)$ instead of $\nu(y)$  for $y\in \R^d$. Let us show that $\widetilde{\nu}$ satisfies the desired conditions. Note that $(ii)$ is a direct consequence of the fact that $|h|\leq R(1+|h|)$  and $R\leq R(1+|h|)$ for all $h \in \R^d. $ Passing through polar coordinates, we have 
\begin{align*}
\widetilde{\nu}(\R^d)& = \il_{\R^d} \nu(R(1+|h|))\d h =|\mathbb{S}^{d-1}| \int_{0}^{\infty} \nu(R(1+r)) r^{d-1} \d r\\
&= |\mathbb{S}^{d-1}|R^{-1} \int_{R}^{\infty} \nu(r) \Big(\frac{r}{R}-1\Big)^{d-1} \d r\leq  |\mathbb{S}^{d-1}|R^{-d} \int_{R}^{\infty} \nu(r) r^{d-1} \d r\\
&=  R^{-d} \int_{|h|\geq R} (1\land |h|^2)  \nu(h) \d h \leq R^{-d} \int_{\R^d} (1\land |h|^2) \nu(h) \d h< \infty\, .
\end{align*}
This proves $(i)$ and hence $L^2(\R^d,\widetilde{\nu}(h)\d h) \subset L^1(\R^d,\widetilde{\nu}(h)\d h)$. Let $u\in \VnuOm \subset L^2(\Omega)$. Then 
\begin{align*}
\int_{\Omega}u^2(x)\d x+\iint\limits_{\Omega\Omega^c} &(u(x)-u(y))^2\nu(x-y)\d y\d x
\\
&=  \widetilde{\nu}(\Omega^c)^{-1} \iint\limits_{\Omega\Omega^c} u^2(x)\widetilde{\nu}(y)\d y\d x+\iint\limits_{\Omega\Omega^c} (u(x)-u(y))^2\nu(x-y)\d y\d x\\
&\geq  (1\land  \widetilde{\nu}(\Omega^c)^{-1})(1\land c^{-1}) \iint\limits_{\Omega\Omega^c}\Big[ u^2(x)+ (u(x)-u(y))^2 \Big] \widetilde{\nu}(y)\d y\d x\\
&\geq  (1\land  \widetilde{\nu}(\Omega^c)^{-1})(1\land c^{-1})\frac{|\Omega|}{2} \int\limits_{\Omega^c}u^2(y)  \widetilde{\nu}(y)\d y\, .
\end{align*}
Moreover, note that for an appropriate constant $C>0$ we have  $C^{-1}\|u\|_{L^2(\Omega)} \leq\|u\|_{L^2(\Omega, \, \widetilde{\nu}(h)\d h)} \leq C \|u\|_{L^2(\Omega)} $ since $R\leq R(1+|h|)\leq R(1+R)$ for all $ h\in \Omega$. This together with the previous estimate shows $u\in L^2(\R^d, \widetilde{\nu})$. Therefore, the proof of $(iii)$ is complete. Obviously, we also have  $\vertiii{u}_{\VnuOm}\leq C\vertiii{u}^*_{\VnuOm}$.  The reverse inequality  is an immediate consequence of the above estimates, thereby proving the equivalence of the two norms under consideration. Part $(iv)$ is proved.

\end{proof}

\medskip

\begin{proposition}
Let $\alpha_0\in (0,2)$ be as in \eqref{eq:consequence-integrability}.  The quadratic forms $(\mathcal{E}^{\alpha}_{\Omega}(\cdot, \cdot), H_{\nu^\alpha} (\Omega)) $ and $\big(\mathcal{E}^{\alpha}(\cdot, \cdot), V_{\nu^\alpha}(\Omega|\mathbb{R}^d)\cap L^2(\mathbb{R}^d)\big)$  are well defined for every $\alpha\in (\alpha_0, 2)$. 
\end{proposition}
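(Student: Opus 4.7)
The content of the proposition is that the iterated integrals defining $\mathcal{E}^\alpha_\Omega$ and $\mathcal{E}^\alpha$ are absolutely convergent on the indicated function spaces. My plan is to estimate $\mathcal{E}^\alpha_\Omega(u,u)$ and $\mathcal{E}^\alpha(u,u)$ in terms of the natural norms on $H_{\nu^\alpha}(\Omega)$ and $V_{\nu^\alpha}(\Omega|\R^d)\cap L^2(\R^d)$, and then deduce the bilinear case by the Cauchy--Schwarz inequality (applied to the symmetric positive kernel $J^\alpha(x,y)\d x \d y$ on the appropriate product set).

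The unified strategy for both forms is to split the domain of integration into the near-diagonal part $\{|x-y|\le 1\}$ and the far-diagonal part $\{|x-y|>1\}$. On the near-diagonal part I would invoke the ellipticity assumption \eqref{eq:elliptic-condition}, which gives $J^\alpha(x,y)\le \Lambda\,\nu^\alpha(x-y)$, so that the corresponding piece of $\mathcal{E}^\alpha_\Omega(u,u)$ is dominated by $\Lambda\iint_{\Omega\Omega}(u(x)-u(y))^2 \nu^\alpha(x-y)\,\d x\,\d y$, which is finite by definition of $H_{\nu^\alpha}(\Omega)$; analogously, the near-diagonal piece of $\mathcal{E}^\alpha(u,u)$ is bounded by $\Lambda\,[u]^2_{V_{\nu^\alpha}(\Omega|\R^d)}<\infty$.

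For the far-diagonal part I would use the elementary inequality $(u(x)-u(y))^2 \le 2u(x)^2+2u(y)^2$ together with the symmetry of $J^\alpha$ in $(x,y)$. This reduces the problem to bounding integrals of the form $\iint u(x)^2 J^\alpha(x,y)\,\d x\,\d y$ over sets where $|x-y|>1$. By Fubini and \eqref{eq:consequence-integrability} (which follows from $\alpha>\alpha_0$), the inner integral $\int_{|h|>1} J^\alpha(x,x+h)\,\d h$ is bounded uniformly in $x$ by $\kappa_0$. For $\mathcal{E}^\alpha_\Omega$ this produces a bound of the form $C\kappa_0\|u\|_{L^2(\Omega)}^2$; for $\mathcal{E}^\alpha$ one has to split the far-diagonal part of $(\Omega^c\times\Omega^c)^c$ according to whether $x\in\Omega$ or $x\in\Omega^c$ (and symmetrically for $y$), yielding an estimate of the form $C\kappa_0\|u\|_{L^2(\R^d)}^2$, which is exactly where the extra hypothesis $u\in L^2(\R^d)$ enters.

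Combining the near- and far-diagonal bounds gives $\mathcal{E}^\alpha_\Omega(u,u)\le C(\|u\|_{L^2(\Omega)}^2 + [u]^2_{H_{\nu^\alpha}(\Omega)})$ and $\mathcal{E}^\alpha(u,u)\le C(\|u\|_{L^2(\R^d)}^2 + [u]^2_{V_{\nu^\alpha}(\Omega|\R^d)})$, with $C=C(\Lambda,\kappa_0)$ independent of $u$. The bilinear case is then immediate from the Cauchy--Schwarz inequality $|\mathcal{E}^\alpha(u,v)|\le \mathcal{E}^\alpha(u,u)^{1/2}\mathcal{E}^\alpha(v,v)^{1/2}$, since $(x,y)\mapsto J^\alpha(x,y)$ is a nonnegative symmetric weight. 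There is no real obstacle here; the only conceptual point is to recognise that (E) controls only the small-jump behaviour of $J^\alpha$, so that the large-jump behaviour must be absorbed separately using the $L^2$-norm of $u$ and the uniform bound \eqref{eq:consequence-integrability}, which explains why $L^2(\R^d)$ is added to $V_{\nu^\alpha}(\Omega|\R^d)$ in the statement.
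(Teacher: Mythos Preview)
Your proposal is correct and follows essentially the same approach as the paper: the paper also splits into the regions $\{|x-y|\le 1\}$ and $\{|x-y|>1\}$, uses \eqref{eq:elliptic-condition} on the former to pass to the $\nu^\alpha$-seminorm, and uses $(u(x)-u(y))^2\le 2u(x)^2+2u(y)^2$ together with \eqref{eq:consequence-integrability} on the latter to produce the $L^2(\Omega)$ resp.\ $L^2(\R^d)$ contribution. The only cosmetic difference is that the paper handles $\mathcal{E}^\alpha$ by first bounding $\mathcal{E}^\alpha_\Omega$ and then separately treating the $\Omega\times\Omega^c$ piece, whereas you describe the argument on $(\Omega^c\times\Omega^c)^c$ in one go; the content is identical.
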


\begin{proof}
	Let $\alpha\in (\alpha_0, 2)$ . Let $u \in H_{\nu_\alpha}(\Omega)$.  By the assumption \eqref{eq:elliptic-condition} and relation \eqref{eq:consequence-integrability} we have 
	
	\begin{align*}
	\mathcal{E}^\alpha_{\Omega}(u,u) &= \hspace*{-2ex}\iil_{\Omega\Omega \cap \{|x-y|\leq 1\}} (u(x)-u(y))^2 J^\alpha(x,y)\d x\d y+ \hspace*{-2ex} \iil_{\Omega\Omega \cap \{|x-y|>1\}} (u(x)-u(y))^2 J^\alpha(x,y)\d x\d y\\
	&\leq   \Lambda \hspace*{-2ex}\iil_{\Omega\Omega \cap \{|x-y|\leq 1\}} (u(x)-u(y))^2 \nu^\alpha(x-y)\d x\d y+ 4 \il_{\Omega} u^2(x)\d x\il_{|x-y|>1} J^\alpha(x,y)\d y\\
	&\leq   \Lambda \iil_{\Omega\Omega } (u(x)-u(y))^2 \nu^\alpha(x-y)\d x\d y+ 4 \kappa_0 \il_{\Omega} u^2(x)d x\\
	&\leq (\Lambda+4\kappa_0)\|u\|^2_{H_{\nu^\alpha}(\Omega)}<\infty\, .
	\end{align*}
Now if $u \in V_{\nu_\alpha}(\Omega|\mathbb{R}^d) $ then, from the above  we  deduce  $\mathcal{E}^\alpha_{\Omega}(u,u)<\infty$. By the same argument we obtain 
	\begin{align*}
	&\iil_{\Omega\Omega^c } (u(x)-u(y))^2 J^\alpha(x-y)\d x\d y\\&\leq \Lambda \hspace*{-2ex}\iil_{\Omega\Omega^c \cap \{|x-y|\leq 1\}} (u(x)-u(y))^2 \nu^\alpha(x-y)\d x\d y+ 2\hspace*{-2ex} \iil_{\Omega\Omega^c \cap \{|x-y|>1\}} (u^2(x)+u^2(y)J^\alpha(x,y)\d x\d y\\
	&\leq   \Lambda \iil_{\Omega\Omega^c\cap \{|x-y|\leq 1\}} (u(x)-u(y))^2 \nu^\alpha(x-y)\d x\d y+ 2 \kappa_0 \il_{\Omega} u^2(x)d x+ 2 \kappa_0 \il_{\Omega^c} u^2(x)d x\\
	 &\leq  \Lambda \iil_{\Omega\Omega^c} (u(x)-u(y))^2 \nu^\alpha(x-y)\d x\d y+ 2 \kappa_0 \il_{\mathbb{R}^d} u^2(x)dx<\infty\, .
	\end{align*}
	Finally, we obtain 
	\begin{align*}
	\mathcal{E}^\alpha(u,u) &= \mathcal{E}^\alpha_{\Omega}(u,u) + 2\iil_{\Omega\Omega^c } (u(x)-u(y))^2 J^\alpha(x-y)\d x\d y<\infty\, . 
	\end{align*}

\end{proof}

\medskip

\begin{definition}[cf. \cite{Adams}] In what follows, a domain $D \subset \mathbb{R}^d$ is called an extension domain if there exists a linear operator $E: H^1(D)\to H^1(\mathbb{R}^d)$ and a constant $C: = C(D, d)$ depending only on the domain $D$ and the dimension $d$ such that for all $u \in H^1(D)$
	\begin{align*}
	Eu|_{D} = u \qquad\hbox{and} \qquad \|Eu\|_{H^1(\mathbb{R}^d)}\leq C \|u\|_{H^1(D)}.
	\end{align*}
\end{definition}

The next lemma shows that the nonlocal quadratic forms under consideration are continuous on $H^1(D)$.

\begin{lemma}\label{lem-cont-qua-form}
	
	Assume $ D \subset \mathbb{R}^d$ be an extension domain.  Assume $J^\alpha $ satisfies \eqref{eq:elliptic-condition} and \eqref{eq:integrability-condition} and let   $\alpha_0\in  (0,2)$be as in \eqref{eq:consequence-integrability}.  Then, there exists a constant $C:= C(D, \Lambda, d, \alpha_0)$ such that for every $u \in H^1(D)$ and every $\alpha\in  (\alpha_0,2)$

	\begin{align*}
	\mathcal{E}^{\alpha}_D(u,u) \leq C\| u\|^2_{H^1(D)}.
	\end{align*}
	
\end{lemma}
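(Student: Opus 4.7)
The plan is to split the double integral defining $\mathcal{E}^{\alpha}_D(u,u)$ according to whether $|x-y| \leq 1$ or $|x-y| > 1$, handle the far-field part via \eqref{eq:consequence-integrability}, and the near-field part by first using \eqref{eq:elliptic-condition} and then controlling the resulting Gagliardo-type seminorm with the $H^1$-norm of an extension of $u$ to $\R^d$. The key observation that makes this uniform in $\alpha$ is that $|h|^2 \nu^{\alpha}(h) = \rho_{2-\alpha}(h)$ by the very definition in \autoref{def:nu-alpha}, so $\int_{\R^d} |h|^2 \nu^{\alpha}(h)\,\d h = 1$ for every $\alpha \in (0,2)$.

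More precisely, for the far-field part $\{|x-y|>1\}$ I would use the pointwise bound $(u(x)-u(y))^2 \leq 2u(x)^2 + 2u(y)^2$, Fubini and the defining property of $\kappa_0$ in \eqref{eq:consequence-integrability} to obtain
\begin{align*}
\iil_{D\times D,\,|x-y|>1} (u(x)-u(y))^2 J^\alpha(x,y)\,\d x\,\d y \leq 4\kappa_0 \|u\|_{L^2(D)}^2
\end{align*}
for all $\alpha \in (\alpha_0,2)$. For the near-field part I would apply \eqref{eq:elliptic-condition} to replace $J^\alpha(x,y)$ by $\Lambda\,\nu^\alpha(x-y)$, then use the extension operator $E$ to dominate the integral over $D\times D$ by the corresponding integral over $\R^d\times \R^d$:
\begin{align*}
\iil_{D\times D,\,|x-y|\leq 1} (u(x)-u(y))^2 \nu^\alpha(x-y)\,\d x\,\d y \leq \iil_{\R^d\times \R^d} (Eu(x)-Eu(y))^2 \nu^\alpha(x-y)\,\d x\,\d y.
\end{align*}
Substituting $h=y-x$ and applying the standard inequality $\int_{\R^d} (Eu(x+h)-Eu(x))^2\,\d x \leq |h|^2 \|\nabla Eu\|_{L^2(\R^d)}^2$ valid for $Eu \in H^1(\R^d)$, together with Fubini, the last integral is bounded by
\begin{align*}
\|\nabla Eu\|_{L^2(\R^d)}^2 \int_{\R^d} |h|^2 \nu^\alpha(h)\,\d h = \|\nabla Eu\|_{L^2(\R^d)}^2 \int_{\R^d} \rho_{2-\alpha}(h)\,\d h = \|\nabla Eu\|_{L^2(\R^d)}^2.
\end{align*}
Combining the two estimates and invoking the extension bound $\|Eu\|_{H^1(\R^d)} \leq C(D,d)\|u\|_{H^1(D)}$ yields the claimed inequality with $C = \Lambda\, C(D,d)^2 + 4\kappa_0$.

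There is no real obstacle here; the only point that deserves care is that the constant $\int |h|^2 \nu^\alpha(h)\,\d h$ must be bounded independently of $\alpha$, and this is precisely what the normalization condition $\int \rho_\eps(h)\,\d h = 1$ from \autoref{def:nu-alpha} delivers. Note in particular that we never needed the almost-decreasing property of $|h|^{-2}\rho_\eps(h)$ for this lemma — it is only the integrability identity that matters.
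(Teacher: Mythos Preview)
Your proof is correct and follows essentially the same approach as the paper's: split at $|x-y|=1$, bound the far-field piece using \eqref{eq:consequence-integrability}, and for the near-field piece pass to an $H^1$-extension and use the translation estimate $\|\overline u(\cdot+h)-\overline u\|_{L^2(\R^d)}\le |h|\|\nabla\overline u\|_{L^2(\R^d)}$ together with $\int\rho_{2-\alpha}=1$. The only cosmetic differences are that the paper keeps the restriction $|h|\le 1$ in the near-field integral (giving $\int_{|h|\le 1}\rho_{2-\alpha}\le 1$) while you integrate over all of $\R^d$, and your far-field constant $4\kappa_0$ versus the paper's $2\kappa_0$.
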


\begin{proof}
	Firstly,  from the symmetry of  $J^\alpha(x,y)$ and \eqref{eq:consequence-integrability} we have the following estimates
	\begin{eqnarray*}
		\iil_{D\times  D \cap \{|x-y|\geq 1\}} (u(x)-u(y))^2 J^\alpha(x,y)\d x \, \d y& \leq & 2\il_{D}u^2(x) \d x \il_{ |x-y|\geq 1} J^\alpha(x,y) \, \d y\\
		&\leq& 2\kappa_0\|u\|^2_{L^2(D)}.
	\end{eqnarray*}

	Now, let $\overline{u}\in H^{1}(\mathbb{R}^d)$ be an extension of $u$ then upon the estimate 
	$\|\overline{u}(\cdot+ h)-\overline{u}\|_{L^2(\mathbb{R}^d)} \leq  
	|h| \|\nabla\overline{u}\|_{L^2(\mathbb{R}^d)}$ (which can be established through density of smooth functions with compact support in   $H^{1}(\mathbb{R}^d)$) we have

	\begin{align*}
	\iil_{D\times D\cap \{|x-y|\leq  1\}} &\frac{(u(x)-u(y))^2}{|x-y|^{2}} \rho_{2-\alpha}(x-y) \d x \, \d y 
=  \iil_{D\times D\cap \{|x-y|\leq  1\}} \frac{(\overline{u}(x)-\overline{u}y))^2}{|x-y|^{2}}  \rho_{2-\alpha}(x-y) \d x \, \d y \\
	&\leq  \il_{ |h|\leq  1}   \rho_{2-\alpha}(h) \frac{ \d h }{|h|^{2}} \il_{\mathbb{R}^d} (\overline{u}(x+h)-\overline{u}(x))^2 \d x \\
	&\leq  \|\nabla\overline{u}\|_{L^2(\mathbb{R}^d)} \il_{|h|\leq 1} \rho_{2-\alpha}(h) \d h 
	\leq  C \|u\|^2_{H^{1}(D)}. 
	\end{align*}
	Precisely, we have
	\begin{align*}
	\iil_{D\times  D \cap \{|x-y|\leq 1\}} (u(x)-u(y))^2 |x-y|^{-2} \rho_{2-\alpha}(x-y)\d x \, \d y
	&\leq C\| u\|^2_{H^1(D)}.
	\end{align*}
	Combining the above estimates along with  the condition \eqref{eq:elliptic-condition} we get,
	$$ \mathcal{E}^{\alpha}_D(u,u) \leq C\| u\|^2_{H^1(D)}. $$
	
\end{proof}

\medskip

\begin{proposition} Let $\nu$ be as above. The function spaces  $\big(V_\nu (\Omega|\mathbb{R}^d), \|\cdot\|_{V_\nu (\Omega|\mathbb{R}^d)}\big)$ and $\big(H_\nu (\Omega), \|\cdot\|_{H_{\nu} (\Omega)}\big)$ are separable Hilbert spaces. If $\nu$ has full support in $\R^d$, i.e. if $\nu>0$ a.e on $\R^d$, then the same is true for the space $\big(V_\nu (\Omega|\mathbb{R}^d), \vertiii{\cdot}_{V_\nu (\Omega|\mathbb{R}^d)}\big)$.
\end{proposition}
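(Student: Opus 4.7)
The plan is to identify each norm as arising from an explicit symmetric bilinear form, prove completeness via a Fatou-type argument on an a.e.-convergent subsequence, and deduce separability from an isometric embedding into a product of classical $L^2$ spaces.

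For the inner product structure I would take $\langle u,v\rangle_{H_\nu(\Omega)} = \langle u,v\rangle_{L^2(\Omega)} + \iint_{\Omega\Omega}(u(x)-u(y))(v(x)-v(y))\nu(x-y)\,\d x\,\d y$ and the analogous expressions for the two $V_\nu$-norms. Positive definiteness is immediate for the $H_\nu$- and the $\|\cdot\|_{V_\nu}$-forms because the $L^2$ summand already separates points; for $\vertiii{\cdot}$ it uses the full-support hypothesis, since $\vertiii{u}=0$ then forces $u=0$ a.e.\ in $\Omega$ and $u(x)=u(y)$ for Lebesgue-a.e.\ $(x,y)\in \Omega\times\Omega^c$, whence Fubini gives $u\equiv 0$ a.e. For completeness in $(H_\nu,\|\cdot\|)$ and $(V_\nu,\|\cdot\|)$, a Cauchy sequence $(u_n)$ is Cauchy in its $L^2$ summand and converges there to some $u$; after extracting an a.e.-convergent subsequence, Fatou's lemma applied to $(u_m(x)-u_n(x)-(u_m(y)-u_n(y)))^2\nu(x-y)$ as $n\to\infty$ shows that $\iint(u_m(x)-u(x)-(u_m(y)-u(y)))^2\nu(x-y)\,\d x\,\d y$ is small uniformly in large $m$, so $u$ lies in the space and $u_n\to u$ in the full norm.

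For $(V_\nu,\vertiii{\cdot})$ I reduce to the equivalent norm $\vertiii{\cdot}^*$ provided by Proposition~\ref{prop:natural-norm-on-V}(iv): its local part is an honest $L^2(\R^d,\widetilde{\nu}\,\d x)$ norm, in which the identical Fatou argument applies verbatim, and equivalence then transfers completeness back to $\vertiii{\cdot}$. Separability comes from the isometric embedding $u\mapsto\bigl(u,\,(x,y)\mapsto u(x)-u(y)\bigr)$ into the direct sum of the relevant $L^2$ space ($L^2(\Omega)$, $L^2(\R^d)$, or $L^2(\R^d,\widetilde{\nu})$ in the three cases) and $L^2$ of $\nu(x-y)\,\d x\,\d y$ on $\Omega\times\Omega$ or on $(\Omega^c\times\Omega^c)^c$; each factor is an $L^2$ space of a $\sigma$-finite Borel measure on a subset of Euclidean space (take the exhaustion $\{|x-y|>1/k\}$ for the Gagliardo measure), hence separable, and separability passes to the closed subspace given by the image.

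The main obstacle is the third case: $\vertiii{\cdot}$ carries no intrinsic $L^2(\R^d)$ control, so neither positive definiteness nor completeness is apparent from the definition alone. Proposition~\ref{prop:natural-norm-on-V} is exactly the tool that dissolves this difficulty, replacing $\vertiii{\cdot}$ by the equivalent $\vertiii{\cdot}^*$ that carries the weighted global $L^2$ control needed to run both the Fatou completeness argument and the separability embedding.
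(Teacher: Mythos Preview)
Your proposal is correct and essentially matches the paper's proof: the inner-product identification, the Fatou-on-subsequence completeness argument for $(V_\nu,\|\cdot\|)$ and $(H_\nu,\|\cdot\|)$, and the isometric embedding for separability are exactly what the paper does.

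The one point of divergence is the completeness of $(V_\nu,\vertiii{\cdot})$. The paper's primary argument is direct: from a $\vertiii{\cdot}$-Cauchy sequence it extracts a subsequence $(u_{k_n})$ converging to some $u$ in $L^2(\Omega)$ and pointwise a.e.\ on $\Omega$, and simultaneously such that $U_{k_n}(x,y)=(u_{k_n}(x)-u_{k_n}(y))\sqrt{\nu(x-y)}$ converges in $L^2(\Omega\times\R^d)$ and pointwise a.e.\ to some $U$; then for a.e.\ $(x,y)$ with $x\in\Omega$ one reconstructs the limit $u(y)=u(x)-U(x,y)/\sqrt{\nu(x-y)}$ (here $\nu>0$ a.e.\ is used), obtaining a globally defined $u\in V_\nu$. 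Your route via the equivalent norm $\vertiii{\cdot}^*$ from Proposition~\ref{prop:natural-norm-on-V}(iv) is cleaner in that it reduces to the already-proved case, and in fact the paper explicitly mentions it as a valid alternative. What the direct argument buys is a self-contained treatment that does not invoke the auxiliary measure $\widetilde{\nu}$; what yours buys is economy and a uniform pattern across all three cases.
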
  

\noindent For the proof we follow ideas from \cite{FKV15, DRV17}. 
\begin{proof}
It is not difficult to check that, $ \|\cdot\|_{V_\nu (\Omega|\mathbb{R}^d)}$ and $ \|\cdot\|_{H_{\nu} (\Omega)}$ are norms on  $V_\nu (\Omega|\mathbb{R}^d)$ and $H_{\nu} (\Omega)$ respectively. 
Now, if  $\vertiii{u}_{V_\nu (\Omega|\mathbb{R}^d)} = 0,$ then, $u=0$ a.e on $\Omega$ and since  $[u]^2_{V_\nu (\Omega|\mathbb{R}^d)}=0$ with $\nu(h)>0$ a.e we have $u(y)=u(x) =0$ for almost all $(x,y)\in \Omega\times \R^d$. That, is $u=0$ a.e on $\R^d$ and this enables $\vertiii{\cdot}_{V_\nu (\Omega|\mathbb{R}^d)}$  to be a  norm on $V_\nu (\Omega|\mathbb{R}^d). $ 

\medskip

Now, let $(u_n)_n$ be a Cauchy sequence in $\big(V_\nu (\Omega|\mathbb{R}^d), \|\cdot\|_{V_\nu (\Omega|\mathbb{R}^d)}\big)$. It converges to some $u$ in the topology of $L^2(\mathbb{R}^d)$ and pointwise almost everywhere in $\mathbb{R}^d$ up to a subsequence $(u_{k_n})_n$. Fix $n$ large enough, the Fatou lemma implies 
\begin{align*}
[u_{k_n}-u]^2_{V_\nu (\Omega|\mathbb{R}^d)} \leq  \liminf_{\ell\to \infty}  \iil_{(\Omega^c\times \Omega^c)^c} \big([u_{k_n}-u_{k_\ell}](x)-([u_{k_n}-u_{k_\ell}](y) \big)^2 \, \nu (x-y) \mathrm{d}x \, \mathrm{d}y \,
\end{align*}
Since $(u_{k_n})_n$ is a Cauchy sequence, the right hand side is finite for any $n$  and tends to $0$ as $n\to \infty$. This implies $u\in V_\nu (\Omega|\mathbb{R}^d)$ and $[u_{k_n}-u]^2_{V_\nu (\Omega|\mathbb{R}^d)} \to 0$ as $n\to \infty$. Finally, $u_n\to u$ in $V_\nu (\Omega|\mathbb{R}^d)$.  Furthermore, the map $\mathcal{I}: V_\nu (\Omega|\mathbb{R}^d)\to L^2(\R^d) \times L^2(\Omega\times \R^d)$ with 
\begin{align*}
\mathcal{I}(u) = \Big(u(x), (u(x)-u(y))\sqrt{\nu(x-y)}\Big)
\end{align*}
is an isometry. Hence  from its Hilbert structure, the space  $\big(V_\nu (\Omega|\mathbb{R}^d), \|\cdot\|_{V_\nu (\Omega|\mathbb{R}^d)}\big)$, which can be identified with $\mathcal{I}\Big(V_\nu (\Omega|\mathbb{R}^d)\Big)$, is  separable as a closed subspace of the separable space  $ L^2(\R^d) \times L^2(\Omega\times \R^d)$. 
Analogously, one  shows that,  $\big(H_\nu (\Omega), \|\cdot\|_{H_{\nu} (\Omega)}\big)$ is a separable Hilbert space. 

\medskip

It remains to prove that $\big(V_\nu (\Omega|\mathbb{R}^d), \vertiii{\cdot}_{V_\nu (\Omega|\mathbb{R}^d)}\big)$ is a separable Hilbert space. Here we assume that $\nu$ has full support on $\R^d$. Without loss of generality we assume $\nu(h)>0$ for every $h\in \R^d$.   
Assume that $(u_n)_n$ is  a Cauchy sequence in $\big(V_\nu (\Omega|\mathbb{R}^d), \vertiii{\cdot}_{V_\nu (\Omega|\mathbb{R}^d)}\big)$. Then there exist a subsequence $(u_{k_n})_n$ of $(u_{n})_n$, a function $u$ in $L^2(\Omega)$, a function $U \in L^{2}(\Omega\times \mathbb{R}^d)$, and null sets $N\subset \mathbb{R}^d$  and $\mathcal{R}\subset \Omega\times \mathbb{R}^d$ such that 
\begin{itemize}
	\item[-] $(u_{k_n})_n$ converges to $u$ in  $L^2(\Omega)$\,,
	\item[-] $(u_{k_n})_n$ converges to $u$ pointwise on $\Omega\setminus N$\,,
	\item[-] $(U_{k_n})_n$ converges to $U$ in  $L^2(\Omega \times \R^d)$\,,
	\item[-] $(U_{k_n})_n$ converges to $U$ pointwise on $(\Omega\times \R^d)\setminus \mathcal{R}$\,,
\end{itemize}
where $U_n(x,y) = (u_n(x)-u_n(y))\sqrt{\nu(x-y)}$. Let $(x,y)\in (\Omega \times \R^d)\setminus \mathcal{R'} $ with $x\neq y$ where $ \mathcal{R'}= \mathcal{R} \cup (N\times \emptyset)$. Then,  as $n\to \infty$ we have  

\begin{align*}
u_{n_k}(y)= u_{n_k}(x)  - U_{n_k}(x,y)/\sqrt{\nu(x-y)} \to  u(x)  - U(x,y)/\sqrt{\nu(x-y)}
\end{align*}

\medskip

Finally, $U(x,y) = (u(x)-u(y))\sqrt{\nu(x-y)} \in L^2(\Omega \times \R^d)$ so that $u \in \VnuOm $. We easily conclude $\vertiii{u_n-u}_{V_\nu (\Omega|\mathbb{R}^d)}\to 0$ as $n\to \infty$, which proves completeness. Let us mention that, alternatively, one could apply the equivalence of the norms $\vertiii{\cdot}_{V_\nu (\Omega|\mathbb{R}^d)}$ and $\vertiii{\cdot}^*_{V_\nu (\Omega|\mathbb{R}^d)}$, cf. \autoref{prop:natural-norm-on-V} $(iv)$. This would allow to establish completeness along the lines of the proof of completeness in the first case. The separability of the space $\big(V_\nu (\Omega|\mathbb{R}^d), \vertiii{\cdot}_{V_\nu (\Omega|\mathbb{R}^d)}\big)$ can be shown as in the case above. 
\end{proof}

\begin{remark}
Let us define spaces of functions that vanish on the complement of $\Omega$. Set
\begin{align}\label{eq:VnuOm-vanish}
V^\Omega_{\nu}(\Omega|\mathbb{R}^d)= \{ u\in V_{\nu}(\Omega|\mathbb{R}^d)~| ~u=0~~\text{a.e. on } \mathbb{R}^d\setminus \Omega\}\,.
\end{align} 
As a direct direct consequence of \autoref{prop:natural-norm-on-V}, the space $\big(V^\Omega_{\nu}(\Omega|\mathbb{R}^d), \|\cdot \|_{V_{\nu}(\Omega|\mathbb{R}^d)} \big)$ is a separable Hilbert space, too. Both norms $\|\cdot \|_{V_{\nu}(\Omega|\mathbb{R}^d)} $ and $\vertiii{\cdot}_{V_{\nu}(\Omega|\mathbb{R}^d)} $ coincide on $V^\Omega_{\nu}(\Omega|\mathbb{R}^d)$.
\end{remark}

\medskip

Finally, we are in the position to prove our first main result, \autoref{thm:density}.

\begin{proof}[Proof of \autoref{thm:density}]
	Assume $u \in V_{\nu} (\Omega|\mathbb{R}^d)$. We prove that there is a sequence $(u_n)$ of functions in $C^\infty_c(\R^d)$ such that $[u-u_n]_{\VnuOm}$ converges to $0$ as $n\to\infty$. This implies
	\begin{align}\label{eq:density-conv}
	\|u_n-u\|_{V_{\nu}(\Omega|\mathbb{R}^d)} \longrightarrow 0 \text{ as } n\to\infty \,,
	\end{align} 
	since the convergence $\|u_n - u\|_{L^2(\R^d)}$ follows by standard arguments. 
	Obviously, the convergence $\vertiii{u_n-u}_{V_{\nu}(\Omega|\mathbb{R}^d)} \to 0$ follows from \eqref{eq:density-conv}.   Note that the sequence $(u_n)$ is constructed by translation and convolution of the function $u$ with a mollifier. 

\medskip

	\noindent \textbf{Step 1:} Let $x_0\in\partial\Omega$. Since $\partial\Omega$ Lipschitz, there exists $r>0$ and a Lipschitz  function $\gamma:\R^{d-1}\to\R$ with Lipschitz constant $k>0$, such that (upon relabeling the coordinates) 
	
	\begin{align*}
	\Omega\cap B_r(x_0) &= \{ x\in B_r(x_0)| x_d > \gamma(x_1,...,x_{d-1})\}.\\ 
	\end{align*}
	Set $x= (x_1,...,x_{d-1},x_d)=(x',x_d)$.  For sake of convenience, we choose $r>0$ so small such that $|\Omega\cap B^c_r(x_0)|>0$. For $x\in B_{r/2}(x_0)$, $\tau >1+k$  and $0<\eps <\frac{r}{2(1+\tau)}$  we define the shifted point 
	\[
	x_\eps =
	x + \tau \eps e_d\,.     
	\]
	We define $u_\eps(x) = u(x_\eps)= u(x+\tau \eps e_d)$ and 
	\[ v_\eps = \eta_\eps \ast u_\eps \] 
	where $\eta_\eps$ is a smooth mollifier having support in $B_\eps(0)$.

	\medskip
	
	\noindent \textbf{Step 2:} Let us assume $\supp u\Subset B_{r/4}(x_0)$. In this case $v_\eps \in C^\infty_c (B_r(x_0))$. The aim of this step is to prove 
	\[ 
	[v_\eps - u]_{\VnuOm} \longrightarrow 0 \quad \text{ as } \eps \to 0 \,. 
	\]
	Due to the nonlocal nature of the seminorm, this step turns out to be rather challenging. We begin with a geometric observation.

	\begin{lemma}\label{lem:guy-geo}
	Let $z\in B_1(0)$. Let $\Omega^z_\eps = \Omega+ \eps(\tau e_d-z)$. Then $\Omega^z_\epsilon\cap B_{r/2}(x_0) \subset \Omega\cap B_{r}(x_0)$. 
    \end{lemma}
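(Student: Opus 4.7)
The plan is to verify the two conditions defining membership in $\Omega\cap B_r(x_0)$ for an arbitrary point of $\Omega^z_\eps\cap B_{r/2}(x_0)$, using the graph representation of $\partial\Omega$ near $x_0$. So I would pick $y\in\Omega^z_\eps\cap B_{r/2}(x_0)$ and write $y=x+\eps(\tau e_d-z)$ with $x\in\Omega$. Clearly $y\in B_{r/2}(x_0)\subset B_r(x_0)$. The first real observation is a size bound on the shift: $|\eps(\tau e_d-z)|\le\eps(\tau+|z|)<\eps(1+\tau)<r/2$, where the last inequality uses the hypothesis $\eps<r/(2(1+\tau))$. By the triangle inequality this forces $x\in B_r(x_0)$ as well. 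This step is what unlocks the local graph representation for \emph{both} points $x$ and $y$.

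With that in hand, $x\in\Omega\cap B_r(x_0)$ yields $x_d>\gamma(x')$, and I want to deduce the analogous inequality $y_d>\gamma(y')$. Writing out components, $y'=x'-\eps z'$ and $y_d=x_d+\eps(\tau-z_d)$. The Lipschitz property of $\gamma$ gives $\gamma(y')\le\gamma(x')+k\eps|z'|$, so the desired inequality reduces to showing $\eps(\tau-z_d)>k\eps|z'|$, i.e.\ $\tau-z_d>k|z'|$. Since $|z|<1$ forces both $z_d<1$ and $|z'|<1$, the hypothesis $\tau>1+k$ yields $\tau-z_d>\tau-1>k\ge k|z'|$, with room to spare. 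Combining with $x_d>\gamma(x')$ gives $y_d>\gamma(y')$ strictly, hence $y\in\Omega\cap B_r(x_0)$, which is the claim.

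The main obstacle is not technical but rather bookkeeping: one must use the hypothesis $\tau>1+k$ exactly to absorb the Lipschitz constant of $\gamma$ into the vertical part of the shift, and independently use $\eps<r/(2(1+\tau))$ to keep the preimage $x$ inside the Lipschitz chart $B_r(x_0)$ so that the graph description may be applied to it. Both quantitative conditions in the setup of Step 1 are therefore used tightly but in an entirely elementary way.
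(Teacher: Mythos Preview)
Your proof is correct and follows essentially the same route as the paper: write a point of $\Omega^z_\eps\cap B_{r/2}(x_0)$ as a shift of a point of $\Omega$, use the bound $\eps(1+\tau)<r/2$ to keep the preimage inside the Lipschitz chart, and then exploit the Lipschitz estimate together with $\tau>1+k$ to verify the graph inequality. If anything, your version is slightly cleaner: you explicitly justify $x\in B_r(x_0)$, whereas the paper asserts the preimage lies in $B_{r/2}(x_0)$, which is not quite what the size bound gives (only $B_r(x_0)$ follows), though this does not affect the argument.
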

 \begin{proof}
	For $h \in \Omega^z_\epsilon\cap B_{r/2}(x_0)$, let us write $ h= t+\eps \tau e_d-\eps z $ with $t \in \Omega$. Then $t\in B_{r/2}(x_0) $, $h' = t'-\eps z'$ and $h_d= t_d+ \eps(\tau -z_d)$. Since $\gamma$ is Lipschitz with Lipschitz constant $k<\tau-1$ and $t\in \Omega\cap B_{r/2}(x_0) = \{ x\in B_{r/2}(x_0)| x_d > \gamma(x')\}$ we obtain
	\begin{align*}
	\gamma(h')&\leq \gamma(t')+ |\gamma(h')-\gamma(t')|  <t_d+ \eps k|z'|\\
	& <t_d+ \eps k< t_d+ \eps(\tau -z_d)=  h_d.
	\end{align*}
	Thus, $h \in B_r(x_0)$ and $h_d>\gamma(h')$. We have shown $h\in \Omega \cap B_r(x_0)$ as desired. 
\end{proof}

The main technical tool of the argument below is the Vitali convergence theorem, see \cite[Chapter 3]{Alt16} or  \cite[Corollary 4.5.5.]{bogachev2007volumeI}.  Since $u$ belongs to the space $\VnuOm$, for every $\delta > 0$ there is $\eta > 0$ such that for all sets $E \subset \Omega$, $F \subset \R^d$ with $|E \times F| < \eta$ we know
\begin{align}\label{eq:equi-int-u}
\iil_{E F} \big(u(x)-u(y)\big)^2 \nu(x-y) \d y \d x &< \delta \text{ and } \iil_{E F}  u^2(y) \d y \d x < \delta \,.
\end{align}
The second estimate uses the fact that $\iint_{\Omega \R^d}  u^2(y) \d y \d x$ is finite because $u$ has compact support. As a consequence of \eqref{eq:equi-int-u}, we derive the following lemma. 

\begin{lemma}\label{lem:equi-int-u-eps}
For every $\delta > 0$ there is $\eta > 0$ such that for all sets $E \subset \Omega$, $F \subset \R^d$ with $|E \times F| < \eta$ 
\begin{align}\label{eq:equi-int-u_eps-z}
\sup\limits_{z \in B_1(0)} \sup\limits_{\eps > 0} \iil_{E F} \big(u^z_\eps(x)-u^z_\eps(y)\big)^2 \nu(x-y) \d y \d x < \delta \,,
\end{align}
where $ u^z_\eps(\xi) = u_\eps(\xi-\eps z) = u(\xi+\eps \tau e_d-\eps z)$.
\end{lemma}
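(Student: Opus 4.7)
The plan is to perform the substitution $x'=x+w$, $y'=y+w$ with $w=\eps(\tau e_d-z)$, which transforms the integral into $\iint_{(E+w)\times(F+w)}(u(x')-u(y'))^2\nu(x'-y')\,\d x'\d y'$; this uses only the translation invariance of $\nu(x-y)$. Since $|w|\leq\eps(\tau+1)<r/2$, the translated measures are preserved: $|E+w|=|E|$ and $|F+w|=|F|$, so $|(E+w)\times(F+w)|=|E||F|<\eta$. The sup over $z\in B_1(0)$ and $\eps>0$ thus reduces to a bound uniform in the translation $w$ ranging over a bounded set.

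Next, I would split each of $E+w$ and $F+w$ according to whether its points lie in $\Omega$ or in $\Omega^c$, producing four subintegrals. Three of them have at least one factor contained in $\Omega$, possibly after swapping $x'$ and $y'$ via the symmetry of the integrand. On each such piece the domain has measure at most $|E||F|<\eta$, and the piece fits the hypothesis of \eqref{eq:equi-int-u}, yielding a bound by $\delta$.

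The genuinely delicate piece is $\iint_{((E+w)\cap\Omega^c)\times((F+w)\cap\Omega^c)}$. Here I invoke \autoref{lem:guy-geo} together with $E\subset\Omega$ to conclude that $(E+w)\cap\Omega^c\subset\mathbb{R}^d\setminus B_{r/2}(x_0)$: indeed, any $p\in(E+w)\cap B_{r/2}(x_0)$ belongs to $(\Omega+w)\cap B_{r/2}(x_0)\subset\Omega$. Since $\supp u\Subset B_{r/4}(x_0)$, the factor $u(x')$ vanishes on $(E+w)\cap\Omega^c$ and the integrand collapses to $u(y')^2\nu(x'-y')$. For such $x'$ and any $y'\in\supp u$ one has $|x'-y'|\geq r/4$, and the radial almost-decreasing property of $\nu$ bounds $\nu(x'-y')$ by a constant $C=C(\nu,r)$. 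The piece is therefore dominated by $C\,|E|\int_{(F+w)\cap\supp u}u^2(y')\,\d y'$.

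To conclude, I would split on the cases $|F|\leq\sqrt{\eta}$ and $|F|>\sqrt{\eta}$. In the first, $|(F+w)\cap\supp u|\leq|F|\leq\sqrt{\eta}$, so the absolute continuity of the Lebesgue integral applied to $u^2\in L^1(\mathbb{R}^d)$ forces the $u^2$-integral to be small; in the second, $|E|\leq\eta/|F|<\sqrt{\eta}$ makes the prefactor small while the $u^2$-integral is bounded by $\|u\|_{L^2(\mathbb{R}^d)}^2$. Either way the product tends to $0$ as $\eta\to 0$, uniformly in $z$ and $\eps$. The main obstacle is this fourth piece, for which the geometric input from \autoref{lem:guy-geo} combined with the strict inclusion $\supp u\Subset B_{r/4}(x_0)$ is essential; the remaining three pieces are handled by a direct appeal to \eqref{eq:equi-int-u}.
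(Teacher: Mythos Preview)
Your proof is correct and rests on the same key ingredients as the paper's—the change of variables, \autoref{lem:guy-geo}, and the inclusion $\supp u\Subset B_{r/4}(x_0)$. The paper's argument is, however, more economical. Rather than splitting both $E+w$ and $F+w$ by membership in $\Omega$ (yielding four pieces), the paper splits only $E^z_\eps$ by membership in $B_{r/2}(x_0)$ (two pieces). For the inside piece, \autoref{lem:guy-geo} gives $E^z_\eps\cap B_{r/2}(x_0)\subset\Omega$ directly, and the first half of \eqref{eq:equi-int-u} applies with no symmetry swap needed. For the outside piece, $u(x)=0$ reduces the integrand to $u^2(y)\,\nu(x-y)$ with $\nu$ bounded, and the resulting term $\iint_{E\times F^z_\eps}u^2(y)\,\d y\,\d x$ is dispatched in one stroke by the \emph{second} half of \eqref{eq:equi-int-u}; that inequality already packages the absolute-continuity argument you unpack via the case split $|F|\lessgtr\sqrt{\eta}$. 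Your four-piece decomposition and the final case analysis are not wrong, but they re-derive work that the paper obtains for free from \eqref{eq:equi-int-u}.
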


\begin{proof}
Let $\delta > 0$. Choose $\eta > 0$ as in \eqref{eq:equi-int-u}. Let $\eps > 0$, $z \in B_1(0)$. Let $E \subset \Omega$, $F \subset \R^d$ be sets with with $|E \times F| < \eta$. Then
\begin{align}
\iil_{E F} \big(u^z_\eps(x)-u^z_\eps(y)\big)^2 \nu(x-y) \d y \d x = \iil_{E^z_\eps F^z_\eps} \big(u(x)-u(y)\big)^2 \nu(x-y) \d y \d x \,,
\end{align}
where $E^z_\eps = E + \eps(\tau e_d-z)$ and $F^z_\eps$ defined analogously. We decompose $E^z_\eps$ as follows $E^z_\eps = E^z_\eps \cap B_{r/2}(x_0) \cup E^z_\eps \cap B^c_{r/2}(x_0)$. Note 
\[ E^z_\eps \cap B_{r/2}(x_0) \subset \Omega^z_\eps \cap B_{r/2}(x_0) \subset \Omega \cap B_{r/2}(x_0) \,,\]
where we apply \autoref{lem:guy-geo}. We directly conclude
\begin{align}\label{eq:equi-one}
\iil_{E^z_\eps   F^z_\eps} \mathbbm{1}_{B_{r/2}(x_0)}(x) \big(u(y)-u(x)\big)^2 \nu(x-y) \d y \d x \leq \delta \,.
\end{align}
With regard to the remaining term note
\begin{align}\label{eq:equi-two}
\begin{split}
\iil_{E^z_\eps   F^z_\eps}& \mathbbm{1}_{B^c_{r/2}(x_0)}(x) \big(u(x)-u(y)\big)^2 \nu(x-y) \d y \d x \\ 
&= 
\iil_{E^z_\eps   F^z_\eps} \mathbbm{1}_{B^c_{r/2}(x_0)}(x) \mathbbm{1}_{B_{r/4}(x_0)}(y) u^2(y) \nu(x-y) \d y \d x\\
&\leq c(r, \nu) \iil_{E^z_\eps   F^z_\eps} \mathbbm{1}_{B^c_{r/2}(x_0)}(x) \mathbbm{1}_{B_{r/4}(x_0)}(y) u^2(y) \d y \d x \leq c \iil_{E^z_\eps   F^z_\eps}  u^2(y) \d y \d x\\
&= c \iil_{E F^z_\eps}  u^2(y) \d y \d x\leq c \delta \,.
\end{split}
\end{align}
The positive constant $c(r,\nu)$ depends on $r$ and on the shape of $\rho$. Summation over \eqref{eq:equi-one} and \eqref{eq:equi-two} completes the proof after redefining $\delta$ accordingly. 
\end{proof}

The next lemma shows the tightness of $u^z_\eps(x)-u^z_\eps(y)$ uniformly for $z\in B_1(0)$ and $\eps >0$.

\begin{lemma}\label{lem:tigh-u-eps-z}
For every $\delta>0$ there exists $E_\delta \subset \Omega$ and $F_\delta \subset \mathbb{R}^d$ such that $|E_\delta \times F_\delta |<\infty$  and 
\begin{align}\label{eq:tight-u_eps}
\sup\limits_{z \in B_1(0)} \sup\limits_{\eps > 0} \iil_{(\Omega \times \mathbb{R}^d) \setminus ( E_\delta \times F_\delta)} \big(u^z_\eps(x)-u^z_\eps(y)\big)^2 \nu(x-y) \d y \d x < \delta.
\end{align}
\end{lemma}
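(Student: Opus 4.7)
The plan is to choose $E_\delta := \Omega$ and $F_\delta := B_R(x_0)$ with $R$ a large constant to be determined. Both sets are bounded, so $|E_\delta \times F_\delta|$ is automatically finite, and since $E_\delta = \Omega$ the region of integration in \eqref{eq:tight-u_eps} collapses to $\Omega \times (\R^d \setminus B_R(x_0))$.

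The key observation is a uniform support estimate for the translated function $u^z_\eps(\xi) = u(\xi + \eps(\tau e_d - z))$. Since $\supp u \Subset B_{r/4}(x_0)$ by the standing assumption of Step 2, and the shift satisfies $|\eps(\tau e_d - z)| \leq \eps(\tau + 1) < r/2$ by the constraints $|z| < 1$ and $\eps < r/(2(1+\tau))$, we obtain $\supp u^z_\eps \subset B_{3r/4}(x_0)$, uniformly in $z \in B_1(0)$ and admissible $\eps$. I then pick $R$ so large that $R > 3r/4$ and $\Omega \subset B_{R_0}(x_0) \subset B_R(x_0)$ for some $R_0$. For such $R$, $u^z_\eps(y) = 0$ whenever $y \notin F_\delta$, so that the integrand in \eqref{eq:tight-u_eps} reduces to $u^z_\eps(x)^2 \nu(x-y)$.

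What remains is a routine tail estimate. For $x \in \Omega$ and $y \notin B_R(x_0)$ the triangle inequality gives $|x-y| \geq R - R_0$, and Fubini combined with the translation invariance of Lebesgue measure yields
\begin{align*}
\iil_{\Omega \times (\R^d \setminus B_R(x_0))} u^z_\eps(x)^2 \nu(x-y) \, \d y \, \d x
&\leq \|u^z_\eps\|^2_{L^2(\Omega)} \il_{|h| \geq R - R_0} \nu(h)\,\d h \\
&\leq \|u\|^2_{L^2(\R^d)} \il_{|h| \geq R - R_0} \nu(h)\,\d h,
\end{align*}
where I also used that translation preserves the $L^2$ norm. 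The hypothesis $\nu \in L^1(\R^d, (1 \wedge |h|^2)\,\d h)$ forces $\int_{|h| \geq 1} \nu(h)\,\d h < \infty$, so by dominated convergence the tail $\int_{|h| \geq R - R_0} \nu(h)\,\d h$ tends to $0$ as $R \to \infty$. I may therefore pick $R$ large enough that the right-hand side is below $\delta$, and the bound is manifestly independent of $z$ and $\eps$.

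The principal subtlety is the uniform support bound on $u^z_\eps$, which crucially uses the smallness of the shift enforced by $\eps < r/(2(1+\tau))$; once that is in place the rest is a standard tail computation for the L\'evy kernel $\nu$.
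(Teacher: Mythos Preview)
Your proof is correct and follows essentially the same approach as the paper: choose $E_\delta=\Omega$, $F_\delta=B_R(x_0)$, use the uniform support bound on $u^z_\eps$ to reduce the integrand to $u^z_\eps(x)^2\nu(x-y)$, and then control this by the $L^2$-norm of $u$ times a tail integral of $\nu$. The only cosmetic difference is that the paper performs a change of variables to pass from $u^z_\eps$ back to $u$ before estimating, whereas you bound $\|u^z_\eps\|_{L^2(\Omega)}\le\|u\|_{L^2(\R^d)}$ directly via translation invariance; both routes yield the same tail estimate.
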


\begin{proof}
Fix $\eps>0$ and $z\in B_1(0)$. Let $ \bar{R}= \sup\limits_{\xi \in \Omega} |\xi-x_0|$ which is finite since $\Omega $ is bounded.  Note that $\supp u^z_\eps \subset B_{r/2}(x_0)$. Choose $R>0$ so large such that $[B^c_{R}(x_0)]_{\eps}^z= B^c_{R}(x_0) +\eps (\tau e_d+z)\subset B^c_{R/2}(x_0) $  and  $|x-y|\geq R/2-\bar{R}$  for $x\in B^c_{R/2}(x_0) $ and $y \in \Omega$. Thus,
\begin{align*}
\iil_{(\Omega \times \mathbb{R}^d) \setminus ( \Omega \times B_R(x_0))} & \big(u^z_\eps(x)-u^z_\eps(y)\big)^2  \nu(x-y) \d y \d x
= \iil_{\Omega B^c_R(x_0)} \big(u^z_\eps(x)\big)^2 \nu(x-y) \d y \d x\\
&= \il_{\Omega^z_\eps \cap B_{r/2}(x_0) } u^2(x)\d x \il_{ [B^c_{R}(x_0)]_{\eps}^z}  \nu(x-y) \d y \leq \il_{\Omega} u^2(x)\d x  \il_{ B^c_{R/2-\bar{R}}(x)}  \nu(x-y) \d y\\ 
&= \|u\|^2_{L^2(\Omega)}  \il_{ B^c_{R/2-\bar{R}}(0)} \nu(h) \d h. 
\end{align*}
The  desired result follows by taking   $E_\delta = \Omega$ and $F_\delta =B_R(x_0)$ with  $R>0$ large enough such that $ \il_{ B^c_{R/2-\bar{R}}(0)} \nu(h) \d h<\delta \|u\|^{-2}_{L^2(\Omega)}$ . 
\end{proof}
\begin{lemma} There exists a constant $C(\Omega,r, \nu)$ depending only on $\Omega,r$ and $\nu$ such that, for all $z\in B_1(0)$ and all $\eps>0$
\begin{align}\label{eq:estimate-seminorm}
 [u^z_\eps]^2_{\VnuOm}\leq C(\Omega,r, \nu) [ u]^2_{\VnuOm}.
\end{align} 
\end{lemma}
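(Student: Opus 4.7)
The plan is to combine a translation change of variables with the geometric inclusion furnished by \autoref{lem:guy-geo} and a Poincar\'e-type bound exploiting the support condition $\supp u \Subset B_{r/4}(x_0)$.

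First, substituting $(x,y) \mapsto (x - \eps(\tau e_d - z), y - \eps(\tau e_d - z))$ and using translation invariance of $\nu(x-y)$ yields
\[
[u^z_\eps]^2_\VnuOm = \iil_{((\Omega^z_\eps)^c \times (\Omega^z_\eps)^c)^c} (u(x)-u(y))^2 \nu(x-y)\, \d x\, \d y \leq 2\iil_{\Omega^z_\eps \times \R^d} (u(x)-u(y))^2 \nu(x-y)\, \d x\, \d y,
\]
after exploiting symmetry of the integrand and discarding the nonnegative $\iint_{\Omega^z_\eps \times \Omega^z_\eps}$ term. I next split $\Omega^z_\eps = (\Omega^z_\eps \cap \Omega) \cup (\Omega^z_\eps \setminus \Omega)$. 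The contribution from $(\Omega^z_\eps \cap \Omega) \times \R^d$ is trivially bounded by $\iint_{\Omega \times \R^d}(u(x)-u(y))^2 \nu(x-y)\leq [u]^2_\VnuOm$. By \autoref{lem:guy-geo}, $\Omega^z_\eps \setminus \Omega \subset B^c_{r/2}(x_0)$, so $u(x)=0$ on this set and the integrand collapses to $u(y)^2 \nu(x-y)$. Partitioning the $y$-integration into $\Omega$ and $\Omega^c$: for $y \in \Omega$ the pair $(x,y)$ sits in $\Omega^c \times \Omega \subset (\Omega^c \times \Omega^c)^c$, and rewriting $u(y)^2 = (u(x)-u(y))^2$ bounds this piece by $[u]^2_\VnuOm$. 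For $y \in \Omega^c$ the support assumption forces $y \in \Omega^c \cap B_{r/4}(x_0)$ and hence $|x-y|\geq r/4$; integrating $x$ out gives the bound $\kappa_r \|u\|_{L^2(\Omega^c \cap B_{r/4}(x_0))}^2$ with $\kappa_r := \int_{|h|\geq r/4}\nu(h)\,\d h < \infty$ by the L\'evy integrability condition on $\nu$.

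To close the estimate I need the Poincar\'e-type bound $\|u\|^2_{L^2(\Omega^c \cap B_{r/4}(x_0))} \leq C(\Omega,r,\nu)[u]^2_\VnuOm$. I would obtain this by pairing each $y \in \Omega^c \cap B_{r/4}(x_0)$ with $x \in \Omega \cap B^c_r(x_0)$, a set of positive measure by the choice of $r$, on which $u$ vanishes because $\supp u \subset B_{r/4}(x_0)$. For such pairs $|x-y|\leq \diam(\Omega)+r/4$, so the almost-decreasing property of $\nu$ supplies a uniform positive lower bound $\nu(x-y)\geq c_\nu > 0$, and the inequality follows from $[u]^2_\VnuOm \geq \iint_{(\Omega\cap B_r^c(x_0)) \times (\Omega^c\cap B_{r/4}(x_0))}(u(x)-u(y))^2 \nu(x-y)\,\d x\,\d y$. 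The same argument (swapping the roles of $\Omega$ and $\Omega^c$ near $x_0$, as needed) also controls $\|u\|^2_{L^2(\Omega\cap B_{r/4}(x_0))}$ if this is required elsewhere. The Poincar\'e step is the main technical point of the proof; it is precisely where the constant acquires its dependence on $\nu$, since it requires $\nu$ to be nontrivial at the macroscopic scale $\diam(\Omega)+r/4$.
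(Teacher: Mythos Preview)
Your proof is correct and follows essentially the same strategy as the paper: translate to reduce to an integral over $\Omega^z_\eps\times\R^d$, invoke \autoref{lem:guy-geo} to identify a ``good'' region where the estimate is immediate, and on the remaining region (where $u(x)=0$) absorb $\|u\|_{L^2}^2$ back into $[u]^2_{\VnuOm}$ via a Poincar\'e-type inequality obtained by pairing against $\Omega\cap B_r^c(x_0)$, using the lower bound on $\nu$ at macroscopic scales. The only difference is organizational: the paper splits $\Omega^z_\eps$ by $B_{r/2}(x_0)$ versus $B_{r/2}^c(x_0)$, so that the two resulting pieces land neatly in $(\Omega\cap B_r(x_0))\times\R^d$ and $(\Omega\cap B_r^c(x_0))\times\R^d$ and sum exactly to $[u]^2_{\VnuOm}$; you instead split by $\Omega$ versus $\Omega^c$ and then sub-split the second piece by $y\in\Omega$ versus $y\in\Omega^c$, which costs an extra case and a larger constant but is equally valid.
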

		
\begin{proof}
Note that, $|x-y|\geq r/4$ for $x\in B^c_{r/2}(x_0) $ and $y \in B_{r/4}(x_0)$ and   there is  $c_r(\Omega, \nu)>0$ such that $ \nu(x-y) >c_r(\Omega,\nu)$ for all $x\in \Omega$ and all $y \in B_{r/4}(x_0)$ since $\Omega$ is bounded.  Let us chose $ C=C(\Omega,r, \nu)$ not less than 
\[ 1+c_r^{-1}(\Omega,\nu)|\Omega\cap B_r^c(x_0)|^{-1}\int\limits_{ B^c_{r/4}(0)} \nu(h)   \d h.\]
Therefore, for each $z\in B_1(0)$ and each $\eps>0 $ we have 
\begin{align*}
 \iint\limits_{ \Omega^z_\epsilon\cap B^c_{r/2}(x_0)\times \R^d} \left(  u(x) - u(y) \right)^2 \nu(x-y)  \d y \d x
&= \int\limits_{ B_{r/4}(x_0) } u^2(y)\d y \int\limits_{ \Omega^z_\epsilon\cap B^c_{r/2}(x_0)}  \nu(x-y)   \d x\\
&\leq \int\limits_{ B_{r/4}(x_0) } u^2(y)\d y \int\limits_{ B^c_{r/4}(y)}  \nu(x-y)   \d x\\
&\leq  C\hspace{-3ex}  \int\limits_{ B_{r/4}(x_0) } u^2(y)\d y \int\limits_{ \Omega \cap B^c_{r}(x_0)}  \nu(x-y)   \d x\\
&= C \hspace{-3ex}\iint\limits_{ \Omega \cap B^c_{r}(x_0)\times\R^d }(u(x)-u(y))^2  \nu(x-y)  \d y \d x.\
\end{align*}
Using a change of variables, this  and \autoref{lem:guy-geo}, we have 
\begin{align*}
&[ u^z_\eps]^2_{\VnuOm} = \iint\limits_{ \Omega\R^d} \left(  u^z_\eps(x) - u^z_\eps(y) \right)^2 \nu(x-y)  \d y \d x
= \iint\limits_{ \Omega^z_\epsilon\R^d} \left(  u(x) - u(y) \right)^2\nu (x-y)  \d y \d x\\
&= \iint\limits_{ \Omega^z_\epsilon\cap B_{r/2}(x_0)\times \R^d}\hspace*{-2ex} \left(  u(x) - u(y) \right)^2 \nu(x-y)  \d y \d x
+ \hspace{-3ex} \iint\limits_{ \Omega^z_\epsilon\cap B^c_{r/2}(x_0)\times \R^d} \left(  u(x) - u(y) \right)^2 \nu(x-y)  \d y \d x\\
&\leq C\hspace{-3ex} \iint\limits_{ \Omega\cap B_{r}(x_0)\times \R^d}\hspace*{-2ex} \left(  u(x) - u(y) \right)^2 \nu(x-y)  \d y \d x
+C\hspace{-3ex} \iint\limits_{ \Omega \cap B^c_{r}(x_0)\times\R^d }(u(x)-u(y))^2  \nu(x-y)  \d y \d x\\
&= C [u]^2_{\VnuOm}.
\end{align*}
\end{proof}	

We are now in position to prove the main result of this step. By Jensen's inequality, we get the following 

\begin{align*}
	&[v_\eps - u]^2_{\VnuOm} = \iil_{\Omega\,\R^d} ((v_\eps(x)-v_\eps(y)) -(u(x)-u(y)))^2
	 \nu(x-y)\d y \d x\\
	&= \iil_{\Omega\,\R^d} \Big(\il_{\R^d} ((u_\eps(x-z)-u_\eps(y-z))\eta_\eps(z) \d z 
	-(u(x)-u(y))\Big)^2\nu(x-y) \d y \d x \\
	&=  \iil_{\Omega\,\R^d} \Big(\il_{B_1(0)} ((u_\eps(x-\eps z) - u_\eps(y-\eps z) ) 
	-(u(x)-u(y)))  \eta(z) \d z \Big)^2 \nu(x-y) \d y \d x \\
	&\leq \iint\limits_{\Omega \R^d} \int\limits_{B_1(0)} \big( (u_\eps(x-\eps z) - u_\eps(y-\eps z) ) 
	-(u(x)-u(y))\big)^2\nu(x-y) \eta(z) \d z \d y \d x \\
	&=\int\limits_{B_1(0)}  \eta(z) \iint\limits_{\Omega \R^d} \big( ( u_\eps(x-\eps z) - u_\eps(y-\eps z) ) 
	-(u(x)-u(y))\big)^2\nu(x-y)  \d y \d x\,  \d z\,\\
	&= \int\limits_{B_1(0)}  [u^z_\eps-u]^2_{\VnuOm}\eta(z) \d z\,.
	\end{align*}
\noindent For each $z \in B_1(0)$ the family of functions 
$(x,y)\mapsto \left( ( u^z_\eps(x) - u^z_\eps(y) ) -(u(x)-u(y))\right)^2\nu(x-y) $ with $(x,y) \in \Omega\times\mathbb{R}^d$, $\eps>0$  is equiintegrable (by \autoref{lem:equi-int-u-eps}),   is tight (by \autoref{lem:tigh-u-eps-z}) and converges to $0$ a.e on $\Omega\times\mathbb{R}^d$. Thus for fixed $z\in B_1(0)$ the Vitali's convergence theorem gives 
\begin{align*}
	\iil_{\Omega\,\R^d} \left(( u_\eps(x-\eps z) - u_\eps(y-\eps z) ) 
	-(u(x)-u(y))\right)^2\nu(x-y)  \ \d y \ \d x\overset{\eps \to 0}{\longrightarrow} 0\,.
\end{align*}	
	That is, $[u^z_\eps-u]^2_{\VnuOm} \to 0, $ as $\eps \to 0$ for each $ z\in B_1(0)$.  Further, from estimate \eqref{eq:estimate-seminorm} the function $ z \mapsto  \eta(z)[u^z_\eps-u]^2_{\VnuOm} $ is bounded by $2C [u]_{\VnuOm}$ for all $\eps>0$ and a.e. $z\in B_1(0)$. Thus, by Lebesgue's 
	dominated convergence theorem  
	\[ \int\limits_{B_1(0)}  [u^z_\eps-u]^2_{\VnuOm}\eta(z) \d z \overset{\eps \to 0}{ \longrightarrow 0}.  
	\]
Which implies  $ [v_\eps-u]_{\VnuOm}\to 0$ as $\eps \to 0$.
	
	
	\medskip
	
	\noindent \textbf{Step 3:} 	Let $u\in \VnuOm$ be arbitrary. Let $R>0$ such that $\Omega\subset B_{R}(0)$. Let $f_R\in C_c^\infty(B_{3R}(0))$ with $f_R\leq1$ and 
	$f_R(x)=1$ for all $x\in B_{2 R}(0)$. 
	Define $u_R = f_R u$. Then $\supp(u_R)\subset B_{3R}(0)$ and $[u-u_R]_{\VnuOm} \to 0$ as $R \to \infty$. 
	
	\medskip
	
	\noindent \textbf{Step 4:} Let $x_i\in\partial\Omega$, $r_i>0$, $i=1,..,N$, such that
	\[
	\partial\Omega \subset \bigcup_{i=1}^N B_{r_i/2}(x_i), 
	\]
	where the $r_i$ are chosen small enough, such that (up to relabeling the coordinates) we can assume 
	\begin{align*}
	\Omega\cap B_{4r_i}(x_i) &= \{ x\in B_{4r_i}(x_i)| x_d > \gamma_i(x')\}\\
	\end{align*}
	for some smooth $\gamma_i:\R^{d-1}\to\R$ as in Step 1. Let $\Omega^*= \{x\in \R^d| \dist(x,\Omega)>\frac12 \min_{i=\{1,..,N\}} r_i\}$ 
	and $\Omega_0 = \{x\in \Omega| \dist(x,\Omega^c)>\frac12 \min_{i=\{1,..,N\}} r_i\}$. Then 
	\[
	\bigcup_{i=1}^N B_{r_i}(x_i) \cup \Omega^*\cup\Omega_0 = \R^d .
	\]
	Let $\{\xi_i\}_{i=0}^{N+1}$ be a smooth partition of unity subordinated to the above constructed sets. \\
	We define 
	\[ u_i = \xi_i\cdot u_R \text{ for all } i\in\{0,..,N+1\}, \] 
	and thus 
	\begin{align*}
	&\supp u_i \subset  B_{r_i}(x_i) \text{ for }i\in\{1,..N\}, \\
	&\supp u_0 \subset \Omega_0, \\
	&\supp u_{N+1} \subset \Omega^*. 
	\end{align*}
	
	\medskip
	
	\noindent \textbf{Step 5:} In this step, we use the shorthand notation $\Delta u(x;y) = u(x)-u(y) $. Let $\delta>0$ and  $i\in\{1,..,N\}$. By Step 2 there exists a sequence $v^i_\eps\in C_c^\infty(B_{4r_i}(x_i))$ such that 
	\[
	[u_i-v^i_\eps]_{\VnuOm} \longrightarrow 0 
	\]
	for $\eps\to 0$. Thus we can choose $\eps_0>0$ such that $[u_i-v^i_\eps]_{\VnuOm}< \frac{\delta}{N+2}$
	for all $i\in\{1,..,N\}$.
	
	For $i=N+1$ define $v^{N+1}_\eps = \eta_\eps \ast u_{N+1}$ and set $r=\frac14 \min_{i\in\{1,..,N\}} r_i$. 
	Choosing $\eps< r$ and since  
	$\supp u_{N+1} \subset \Omega^*$ for all $x\in\Omega$, $y\in\R^d$ and $z\in B_\eps(0)$ 
	\[
	\Delta u_{N+1}(x;y) = \Delta v_\eps^{N+1}(x-z;y-z)= 0  \quad \text{ or } \quad \bet{x-y}>r.
	\]
	Thus 
	\begin{align*}
	[v^{N+1}_\eps&-u_{N+1}]^2_{\VnuOm} = \iil_{\Omega\,\R^d} (\Delta v^{N+1}_\eps(x;y)-\Delta u^{N+1}(x;y))^2 
	\nu(x-y) \d y \d x \\
	&= \iil_{\Omega \R^d} \left(\il_{B_\eps(0)} \Delta u_{N+1}(x-z;y-z)-\Delta u_{N+1}(x;y) \eta_\eps(z) \d z\right)^2 
	\nu(x-y) \d x \d y \\
	&\leq C_r \iiint\limits_{B_1(0)\times\Omega\times\R^d} (\Delta u_{N+1}(x-\eps z;y-\eps z)-\Delta u_{N+1}(x;y))^2 
	\d y \d x \eta(z) \d z .
	\end{align*}
	By the continuity of the shift in $L^2(\R^d)$
	\[ \iil_{\Omega\,\R^d} (\Delta u_{N+1}(x-\eps z;y-\eps z)-\Delta u_{N+1}(x;y))^2 \d y \d x \longrightarrow 0. \]
	Further, for any $z\in B_1(0)$, the map 
	\[
	z \mapsto \bet{\eta(z)  \iint\limits_{\Omega\,\R^d} (\Delta u_{N+1}(x-\eps z;y-\eps z)-\Delta u_{N+1}(x;y))^2 
		\nu(x-y) \d y \d x }  
	\]
	is bounded. Thus $[v^{N+1}_\eps-u_{N+1}]_{\VnuOm}\to 0$ by dominated convergence and we find
	$\eps_0>0$, such that  $[v^{N+1}_\eps-u_{N+1}]_{\VnuOm} < \frac{\delta}{N+2}$ for all $\eps<\eps_0$.
	We define $v^{0}_\eps = \eta_\eps \ast u_{0}$. Thus for $\eps<r$ 
	\[\supp v^0_\eps\Subset \Omega.\]
	The convergence $v^0_\eps \to u_0$ follows by the same arguments as above and we find $\eps_0>0$ such 
	that $[v_\eps^0-u_0]_{\VnuOm} < \frac{\delta}{N+2}$ for all $\eps,\eps_0$. 
	
	\medskip
	
	\noindent \textbf{Step 6:} Define $v_\eps = \sum_{i=0}^{N+1} v^i_\eps\in C^\infty_c(\R^d)$. Since $u_R(x) = \sum_{i=0}^{N+1} u_i(x)$, we have 
	\begin{align*}
	[u_R-v_\eps]_{\VnuOm} &\leq \left[\sum_{i=0}^{N+1} \left(v^i_\eps - u_i\right)\right]_{\VnuOm} \\
	& \leq \sum_{i=0}^{N+1} [v^i_\eps -u_i]_{\VnuOm}\\
	& \leq (N+2) \frac{\delta}{N+2} .
	\end{align*}
	Choosing $R=\frac{1}{\eps}$ in Step 3, concludes 
	\[[u-v_\eps]_{\VnuOm}\leq [u-u_{R}]_{\VnuOm}+ [u_R-v_\eps|_{\VnuOm}
	\overset{\eps\to 0}{\longrightarrow}0.\]
	The convergence in $L^2(\R^d)$ follows from the continuity of the shift in $L^2(\R^d)$. 
\end{proof}

\pagebreak[3]

The density of $C^\infty_c(\R^d)$ has a direct consequence for the nonlocal bilinear form under consideration. Concerning the definition of $\nu^\alpha$, the reader might consult \autoref{def:nu-alpha}.

\begin{corollary}\label{cor:regular-DF} Assume $\Omega\subset\mathbb{R}^d$ is a bounded domain with Lipschitz continuous boundary. Assume $J^\alpha$ satisfies \eqref{eq:elliptic-condition} and \eqref{eq:integrability-condition}. Then the bilinear forms $(\mathcal{E}^\alpha, (V_{\nu^\alpha}(\Omega|\R^d) \cap L^2(\R^d))$ and $(\mathcal{E}_\Omega^\alpha, H_{\nu^\alpha}(\Omega))$ are regular Dirichlet forms on $L^2(\R^d)$ resp. $L^2(\Omega)$. 
\end{corollary}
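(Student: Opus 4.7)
The plan is to verify the four defining properties of a regular Dirichlet form for each of the two quadratic forms: symmetry, closedness, the Markov (unit contraction) property, and the existence of a core consisting of continuous, compactly supported functions.

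Symmetry is immediate, since $J^\alpha(x,y) = J^\alpha(y,x)$ by assumption. Closedness reduces to the observation that, under \eqref{eq:elliptic-condition} and the integrability consequence \eqref{eq:consequence-integrability}, the form norm $\mathcal{E}^\alpha_\Omega(u,u) + \|u\|_{L^2(\Omega)}^2$ is equivalent to $\|u\|_{H_{\nu^\alpha}(\Omega)}^2$, and analogously $\mathcal{E}^\alpha(u,u) + \|u\|_{L^2(\R^d)}^2$ is equivalent to $\|u\|_{V_{\nu^\alpha}(\Omega|\R^d)}^2$ on $V_{\nu^\alpha}(\Omega|\R^d) \cap L^2(\R^d)$; both ambient function spaces have already been shown to be Hilbert. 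The Markov property I would handle by the standard unit-contraction argument: for $T(t) = (0 \vee t) \wedge 1$, one has $|T(u(x)) - T(u(y))| \leq |u(x)-u(y)|$ and $|T\circ u| \leq |u|$ pointwise, so nonnegativity of $J^\alpha$ and monotonicity of the $L^2$-norm yield both $T\circ u$ in the form domain and a decrease of the form value.

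The main step is regularity, that is, exhibiting a dense subspace of the form domain which is simultaneously uniformly dense in the relevant space of continuous, compactly supported functions. For $(\mathcal{E}^\alpha, V_{\nu^\alpha}(\Omega|\R^d) \cap L^2(\R^d))$ on $L^2(\R^d)$, the obvious candidate is $C_c^\infty(\R^d)$: density in the form domain is exactly \autoref{thm:density} (combined with the standard density of $C_c^\infty(\R^d)$ in $L^2(\R^d)$), while uniform density in $C_c(\R^d)$ is classical. For $(\mathcal{E}^\alpha_\Omega, H_{\nu^\alpha}(\Omega))$ on $L^2(\Omega)$, I would identify $L^2(\Omega)$ with $L^2(\overline{\Omega})$ on the compact state space $\overline{\Omega}$ and take the restriction algebra $\{f|_{\overline{\Omega}} : f \in C_c^\infty(\R^d)\}\subset C(\overline{\Omega}) \cap H_{\nu^\alpha}(\Omega)$. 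Uniform density in $C(\overline{\Omega})$ is classical, while density in $H_{\nu^\alpha}(\Omega)$ would be obtained either by repeating the partition-of-unity and boundary-shift construction from the proof of \autoref{thm:density} (which is in fact somewhat simpler here, the integration region being only $\Omega\times\Omega$), or, equivalently, by first extending $u\in H_{\nu^\alpha}(\Omega)$ to $Eu\in V_{\nu^\alpha}(\Omega|\R^d)$ using the Lipschitz character of $\partial\Omega$, approximating $Eu$ in $V_{\nu^\alpha}(\Omega|\R^d)$ via \autoref{thm:density}, and restricting to $\Omega$.

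The only substantive obstacle is the density of smooth functions, which in the $\R^d$-setting is precisely \autoref{thm:density} and whose adaptation to the $\Omega$-setting rests again on the boundary-shift mechanism and hence on the Lipschitz regularity of $\partial\Omega$. Once this input is in place, the verification of symmetry, closedness, and the Markov property becomes a routine consequence of the Hilbert-space structure established in the preceding propositions together with the nonnegativity and symmetry of the kernels $J^\alpha$.
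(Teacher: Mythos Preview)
Your proposal is correct and matches the paper's approach: the paper states the corollary without proof, presenting it as a direct consequence of \autoref{thm:density} (the preceding sentence reads ``The density of $C^\infty_c(\R^d)$ has a direct consequence for the nonlocal bilinear form under consideration''). Your verification of symmetry, closedness via norm equivalence (which follows from \eqref{eq:elliptic-condition}, \eqref{eq:consequence-integrability}, and the Hilbert-space propositions already established), the Markov property via unit contraction, and regularity via the density theorem is exactly the routine unpacking the paper omits; for the $H_{\nu^\alpha}(\Omega)$ case, either of your two routes (adapting the boundary-shift argument to $\Omega\times\Omega$, or extending and invoking \autoref{thm:density}) is sound, the former being more self-contained.
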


Note that the bilinear form $(\mathcal{E}^A,H^1(\Omega))$ is a regular Dirichlet form on $L^2(\Omega)$, which follows from the fact that $\Omega$ is an extension domain.

\begin{corollary}\label{cor:regular-DF-classical} Assume $\Omega\subset\mathbb{R}^d$ is a bounded domain with Lipschitz continuous boundary. Assume that $\nu^\alpha$ has full support. Set $J^\alpha (x,y) = \nu^\alpha(x-y)$ and let $\widetilde{\nu^\alpha}$ be given as in \autoref{prop:natural-norm-on-V}.  Then the bilinear form $(\mathcal{E}^\alpha, V_{\nu^\alpha}(\Omega|\R^d))$ is a regular Dirichlet form on $L^2(\R^d, \widetilde{\nu^\alpha})$. In particular, if $J^\alpha$ is given by $J^\alpha_1$ as in \autoref{ex:J-guys-singular}, then the bilinear form $(\mathcal{E}^\alpha, V^{\alpha/2}(\Omega|\R^d))$ is a regular Dirichlet form on $L^2(\R^d, \frac{\d x}{1+|x|^{d+\alpha}})$.

\end{corollary}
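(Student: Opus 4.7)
The plan is to verify the four ingredients of a regular Dirichlet form for $(\mathcal{E}^\alpha, V_{\nu^\alpha}(\Omega|\R^d))$ on the Hilbert space $L^2(\R^d, \widetilde{\nu^\alpha})$: a densely defined closed symmetric nonnegative form, the Markov property, and the existence of a core in $C_c(\R^d)$. Note that under the assumption $J^\alpha(x,y) = \nu^\alpha(x-y)$ the form coincides with the seminorm, $\mathcal{E}^\alpha(u,u) = [u]^2_{V_{\nu^\alpha}(\Omega|\R^d)}$, so the form norm equals $\vertiii{u}^{*2}_{V_{\nu^\alpha}(\Omega|\R^d)}$ from \autoref{prop:natural-norm-on-V}. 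By part $(iv)$ of that proposition, $\vertiii{\cdot}^*$ is equivalent to $\vertiii{\cdot}$, which is a Hilbert norm making $V_{\nu^\alpha}(\Omega|\R^d)$ complete; combined with the inclusion $V_{\nu^\alpha}(\Omega|\R^d)\subset L^2(\R^d,\widetilde{\nu^\alpha})$ from part $(iii)$, this gives closedness of the form.

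The Markov property is a direct consequence of the pointwise bound $|\phi(a)-\phi(b)|\leq|a-b|$ for normal contractions $\phi$: both the $L^2(\R^d,\widetilde{\nu^\alpha})$ norm and the double integral defining $\mathcal{E}^\alpha$ decrease under $u\mapsto\phi\circ u$, so $\phi\circ u$ remains in $V_{\nu^\alpha}(\Omega|\R^d)$ and the form decreases.

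For the core I take $\mathcal{C}:=C_c^\infty(\R^d)$. First, $\mathcal{C}\subset V_{\nu^\alpha}(\Omega|\R^d)$: the integral $\iint_{(\Omega^c\times\Omega^c)^c}(u(x)-u(y))^2\nu^\alpha(x-y)\,\d x\,\d y$ for $u\in C_c^\infty(\R^d)$ splits according to $|x-y|\leq 1$ and $|x-y|>1$; on the first piece the Lipschitz bound $|u(x)-u(y)|\leq\|\nabla u\|_\infty|x-y|$ together with the L\'evy integrability $\int(1\wedge|h|^2)\nu^\alpha(h)\,\d h<\infty$ controls the integrand, while on the second the compact support of $u$ confines the integration to a bounded region against an integrable L\'evy tail. \autoref{thm:density} then yields density of $\mathcal{C}$ in $V_{\nu^\alpha}(\Omega|\R^d)$ in the form norm; mollification gives density of $\mathcal{C}$ in $C_c(\R^d)$ in the uniform norm; and, since $\widetilde{\nu^\alpha}$ is a finite Radon measure by \autoref{prop:natural-norm-on-V}$(i)$, $\mathcal{C}$ is dense in $L^2(\R^d,\widetilde{\nu^\alpha})$. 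This simultaneously produces the core and shows density of $V_{\nu^\alpha}(\Omega|\R^d)$ in the ambient $L^2$ space.

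For the specific case $J^\alpha = J^\alpha_1$, I would invoke the explicit construction in the proof of \autoref{prop:natural-norm-on-V}: taking $\widetilde{\nu^\alpha}(h) = \nu^\alpha(R(1+|h|))$ for $R\geq 1$ with $\Omega\subset B_R(0)$ and substituting $\nu^\alpha(h) = C_{d,\alpha}|h|^{-d-\alpha}$ yields $\widetilde{\nu^\alpha}(h)\asymp(1+|h|)^{-d-\alpha}\asymp(1+|h|^{d+\alpha})^{-1}$, so $L^2(\R^d,\widetilde{\nu^\alpha})$ agrees with $L^2(\R^d, \d x/(1+|x|^{d+\alpha}))$ as normed spaces. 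The only mildly technical step is the inclusion $\mathcal{C}\subset V_{\nu^\alpha}(\Omega|\R^d)$, and even this is routine once L\'evy integrability is invoked.
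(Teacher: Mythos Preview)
Your proposal is correct and follows precisely the approach the paper intends: the corollary is stated without proof because it is meant to be an immediate combination of \autoref{prop:natural-norm-on-V} (closedness via the equivalence $\vertiii{\cdot}\sim\vertiii{\cdot}^*$ and the embedding into $L^2(\R^d,\widetilde{\nu^\alpha})$) and \autoref{thm:density} (existence of the core $C_c^\infty(\R^d)$), with the Markov property being routine. Your treatment of the particular case $J^\alpha=J_1^\alpha$ via the explicit choice $\widetilde{\nu^\alpha}(h)=\nu^\alpha(R(1+|h|))\asymp(1+|h|)^{-d-\alpha}$ is exactly what the remark following \autoref{prop:natural-norm-on-V} suggests.
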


\medskip

The next density theorem is proved in \cite[Theorem A.4]{BGPR17} and it is adapted from the main result in \cite{FSV15} for fractional Sobolev spaces. A more general result is provided by \cite[Theorem 3.3.9]{CF12}.

\begin{theorem}\label{thm:density-omega}
	Assume $\Omega$ has a continuous boundary. Let $\nu$ be a L\'evy measure. Then $C_c^\infty(\Omega)$ is dense in the space (cf. \eqref{eq:VnuOm-vanish})  $\big(V^\Omega_{\nu}(\Omega|\mathbb{R}^d), \|\cdot \|_{V_{\nu}(\Omega|\mathbb{R}^d)} \big)$ .

%
  \end{theorem}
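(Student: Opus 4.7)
The plan is to mimic the proof of \autoref{thm:density}, now taking advantage of the hypothesis $u=0$ a.e.\ on $\Omega^c$: the new feature is that the local boundary shift has to be performed in the \emph{inward} direction, so that the translated function acquires compact support in $\Omega$ and a subsequent mollification produces the desired element of $C_c^\infty(\Omega)$. First I would localize by the same machinery as in Step~4 of the proof of \autoref{thm:density}: cover $\partial\Omega$ by finitely many balls $B_{r_i/2}(x_i)$, $i=1,\dots,N$, with $r_i$ so small that $\Omega\cap B_{4r_i}(x_i)=\{x_d>\gamma_i(x')\}$ for some continuous $\gamma_i:\R^{d-1}\to\R$ (after relabeling coordinates), pick an interior open set $\Omega_0\Subset\Omega$ with $\Omega_0\cup\bigcup_i B_{r_i/2}(x_i)\supset\overline{\Omega}$, and choose a subordinate smooth partition of unity $\{\xi_0,\dots,\xi_N\}$. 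Writing $u=\sum_{i=0}^N u_i$ with $u_i:=\xi_i u$ keeps each summand in $V_\nu^\Omega(\Omega|\R^d)$ (multiplication by a bounded Lipschitz function preserves the space), and reduces the task to approximating each $u_i$ separately.

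For the interior piece $u_0$, whose support sits compactly in $\Omega$, the mollification $v_0^\delta:=\eta_\delta\ast u_0\in C_c^\infty(\Omega)$ converges to $u_0$ in $\|\cdot\|_{V_\nu(\Omega|\R^d)}$ as in the $i=0$ case of Step~5 of \autoref{thm:density}. For each boundary piece $u_i$ ($i\geq 1$) I work in the local chart around $x_i$ and introduce the inward translate
\[
u_i^\lambda(x):=u_i(x-\lambda e_d),\qquad \lambda\in(0,r_i/2).
\]
Since $u_i$ vanishes a.e.\ on $\{x_d\leq\gamma_i(x')\}\cap B_{4r_i}(x_i)$ and has support contained in $\overline{B_{r_i/2}(x_i)}$, a direct computation yields
\[
\overline{\supp u_i^\lambda}\subset\{z:z_d\geq\gamma_i(z')+\lambda\}\cap\overline{B_{r_i}(x_i)}\subset\Omega,
\]
a compact subset of $\Omega$ lying at positive distance from $\partial\Omega$ (this verification uses only continuity of $\gamma_i$). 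Hence for $\delta=\delta(\lambda)$ small enough the double mollification $v_i^{\lambda,\delta}:=\eta_\delta\ast u_i^\lambda$ lies in $C_c^\infty(\Omega)$, and standard mollifier arguments give $\|u_i^\lambda-v_i^{\lambda,\delta}\|_{V_\nu(\Omega|\R^d)}\to 0$ as $\delta\to 0^+$.

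The core analytical step is the convergence $\|u_i-u_i^\lambda\|_{V_\nu(\Omega|\R^d)}\to 0$ as $\lambda\to 0^+$. The $L^2(\R^d)$-part is just continuity of translation in $L^2$. For the seminorm I would apply Vitali's convergence theorem to the integrand $\bigl((u_i^\lambda(x)-u_i^\lambda(y))-(u_i(x)-u_i(y))\bigr)^2\nu(x-y)$ on $\Omega\times\R^d$, following the blueprint of Step~2 in \autoref{thm:density}: pointwise a.e.\ convergence to $0$ along a suitable subsequence is immediate; uniform equi-integrability of the translated family is obtained by a change-of-variables analog of \autoref{lem:equi-int-u-eps}, exploiting that $\Omega+\lambda e_d$ stays in a fixed neighborhood of $\Omega$; tightness at infinity comes from compact support of $u_i$ combined with integrability of $\nu$ away from the origin, as in \autoref{lem:tigh-u-eps-z}. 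A diagonal argument then assembles a sequence $v_n:=v_0^{\delta_n}+\sum_{i=1}^N v_i^{\lambda_n,\delta_n}\in C_c^\infty(\Omega)$ with $\|v_n-u\|_{V_\nu(\Omega|\R^d)}\to 0$.

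The main obstacle will be the equi-integrability portion of this last step. The geometric \autoref{lem:guy-geo} that drove the proof of \autoref{thm:density} was tailored to an outward shift of exterior values into $\Omega$ and relied on the Lipschitz constant of $\gamma_i$. Here the shift is inward and applied to a function already vanishing on $\Omega^c$, so the analogous containment reduces to the much simpler $(\Omega+\lambda e_d)\cap B_{r_i/2}(x_i)\subset\Omega\cap B_{r_i}(x_i)$, which holds whenever $\Omega\cap B_{4r_i}(x_i)$ is the supergraph of a merely continuous function. This is precisely why the theorem holds under continuity (rather than Lipschitz continuity) of $\partial\Omega$.
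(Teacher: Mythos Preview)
The paper does not give its own proof of this theorem; immediately before the statement it says the result ``is proved in \cite[Theorem A.4]{BGPR17} and it is adapted from the main result in \cite{FSV15}'', with a more general version in \cite[Theorem 3.3.9]{CF12}. Your proposal---localize by a partition of unity, shift each boundary piece \emph{inward} so that it acquires compact support in $\Omega$, then mollify---is precisely the strategy of \cite{FSV15}, carried over to a general L\'evy kernel. So your approach is correct and coincides with the argument the paper defers to.

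One simplification worth noting: once $u_i$ and its translates $u_i^\lambda$ all vanish on $\Omega^c$, the seminorm $[\,\cdot\,]_{V_\nu(\Omega|\R^d)}$ agrees on these functions with the full-space quantity $\iint_{\R^d\times\R^d}(v(x)-v(y))^2\nu(x-y)\,\d x\,\d y$, which is translation invariant. Consequently the equi-integrability and tightness of the family $\{(u_i^\lambda(x)-u_i^\lambda(y))^2\nu(x-y)\}_\lambda$ follow immediately from absolute continuity of the single integrable function $(u_i(x)-u_i(y))^2\nu(x-y)$ on $\R^d\times\R^d$, so no analog of \autoref{lem:guy-geo} is actually needed. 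This is exactly why mere continuity of $\partial\Omega$ suffices here: there is no Lipschitz constant to beat, only the qualitative inclusion that lands the inward shift compactly inside $\Omega$.
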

  \medskip
  
\noindent A counterpart of \autoref{cor:regular-DF}  is given by the following. 

\begin{corollary}\label{cor:regular-DF-omega} Assume $\Omega\subset\mathbb{R}^d$ is a bounded domain with continuous boundary. Assume $J^\alpha$ satisfies \eqref{eq:elliptic-condition} and \eqref{eq:integrability-condition}. The bilinear forms  $(\mathcal{E}^\alpha, V^\Omega_{\nu^\alpha}(\Omega|\R^d))$ and $\big(\mathcal{E}_\Omega^\alpha, \overline{C_c^\infty(\Omega)}^{ H_{\nu^\alpha}(\Omega)}\big)$ are regular Dirichlet forms on $L^2(\Omega)$. 
\end{corollary}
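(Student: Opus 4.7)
The plan is to verify, for each of the two forms, the four defining axioms of a regular Dirichlet form on $L^2(\Omega)$: symmetry, density together with closedness, the Markov property, and regularity (existence of a core sitting inside $C_c(\Omega)$).

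\textbf{Symmetry and density in $L^2(\Omega)$.} Both integrands are symmetric in $(x,y)$, so symmetry is immediate. For the first form one identifies $u \in V^\Omega_{\nu^\alpha}(\Omega|\R^d)$ with its restriction $u|_\Omega$ via the vanishing condition outside $\Omega$; then $C_c^\infty(\Omega) \subset V^\Omega_{\nu^\alpha}(\Omega|\R^d)$ and is dense in $L^2(\Omega)$. For the second form $C_c^\infty(\Omega)$ sits inside the domain by construction and is again dense in $L^2(\Omega)$.

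\textbf{Closedness.} I would mimic the Hilbert-space proof given for $V_\nu(\Omega|\R^d)$ and $H_\nu(\Omega)$. Using \eqref{eq:elliptic-condition} together with \eqref{eq:consequence-integrability}, one checks that $\sqrt{\mathcal{E}^\alpha_\Omega(\cdot,\cdot) + \|\cdot\|_{L^2(\Omega)}^2}$ is equivalent to $\|\cdot\|_{H_{\nu^\alpha}(\Omega)}$ and $\sqrt{\mathcal{E}^\alpha(\cdot,\cdot) + \|\cdot\|_{L^2(\Omega)}^2}$ is equivalent to $\|\cdot\|_{V_{\nu^\alpha}(\Omega|\R^d)}$ on $V^\Omega_{\nu^\alpha}(\Omega|\R^d)$ (where one uses $\|u\|_{L^2(\R^d)} = \|u\|_{L^2(\Omega)}$ for functions vanishing outside $\Omega$). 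Since $V^\Omega_{\nu^\alpha}(\Omega|\R^d)$ is a closed subspace of the Hilbert space $V_{\nu^\alpha}(\Omega|\R^d)$, and $\overline{C_c^\infty(\Omega)}^{H_{\nu^\alpha}(\Omega)}$ is closed by construction, both form-norm domains are complete.

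\textbf{Markov property.} The normal unit contraction $T(t)=(0\vee t)\wedge 1$ satisfies $|T(a)-T(b)|\leq |a-b|$ pointwise, which immediately gives $\mathcal{E}^\alpha_\Omega(T(u),T(u))\leq \mathcal{E}^\alpha_\Omega(u,u)$ and likewise for $\mathcal{E}^\alpha$. What requires work is that $T$ preserves the respective domain. For the first form, $T(0)=0$ preserves the condition $u\equiv 0$ a.e.\ on $\Omega^c$, so $T(u)\in V^\Omega_{\nu^\alpha}(\Omega|\R^d)$. For the second form, I would fix an approximating sequence $u_n\in C_c^\infty(\Omega)$ with $u_n\to u$ in $H_{\nu^\alpha}(\Omega)$; each $T(u_n)$ is Lipschitz with compact support in $\Omega$, hence for $\delta < \dist(\supp u_n,\Omega^c)$ the convolution $\eta_\delta \ast T(u_n)$ lies in $C_c^\infty(\Omega)$, and a mollification argument in the spirit of Step 2 of the proof of \autoref{thm:density} shows $\eta_\delta\ast T(u_n)\to T(u_n)$ in $H_{\nu^\alpha}(\Omega)$. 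Thus each $T(u_n)$ already lies in $\overline{C_c^\infty(\Omega)}^{H_{\nu^\alpha}(\Omega)}$. The passage from $T(u_n)$ to $T(u)$ is the main obstacle: since the pointwise inequality $|T(u_n)-T(u_m)|\leq |u_n-u_m|$ does not transfer to double differences, I would combine $L^2$-contractivity of $T$, a.e.\ convergence along a subsequence, lower semicontinuity (Fatou) of $\mathcal{E}^\alpha_\Omega$, and a Banach--Saks type extraction of convex combinations to conclude that $T(u)$ belongs to the closure together with $\mathcal{E}^\alpha_\Omega(T(u),T(u))\leq \mathcal{E}^\alpha_\Omega(u,u)$.

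\textbf{Regularity.} In both cases $C_c^\infty(\Omega)$ is a common core. It is dense in the form domain by \autoref{thm:density-omega} for the first form and by construction for the second form; and it is dense in $C_c(\Omega)$ in the sup norm via standard mollification. This gives regularity and completes the verification.
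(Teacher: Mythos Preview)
The paper gives no explicit proof of this corollary; it is stated as an immediate consequence of \autoref{thm:density-omega} together with the general machinery of Dirichlet forms (the reference to \cite{CF12} just above points in that direction). Your detailed verification of the four axioms is correct and more thorough than anything the paper writes down.

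One comment on efficiency: your treatment of the Markov property for the second form $\big(\mathcal{E}^\alpha_\Omega, \overline{C_c^\infty(\Omega)}^{H_{\nu^\alpha}(\Omega)}\big)$ is more elaborate than necessary. The Banach--Saks extraction works, but you can avoid it entirely by invoking the standard fact (e.g.\ \cite[Theorem 1.4.1]{CF12} or Fukushima--Oshima--Takeda, Theorem 3.1.1) that the smallest closed extension of a closable Markovian symmetric form is automatically a Dirichlet form. For this one uses smooth normal contractions $\phi_\varepsilon$ with $\phi_\varepsilon(0)=0$: if $u\in C_c^\infty(\Omega)$ then $\phi_\varepsilon\circ u\in C_c^\infty(\Omega)$ directly, so the core is stable under $\phi_\varepsilon$, the pointwise inequality gives $\mathcal{E}^\alpha_\Omega(\phi_\varepsilon(u),\phi_\varepsilon(u))\leq \mathcal{E}^\alpha_\Omega(u,u)$, and the cited theorem transfers the Markov property to the closure. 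This replaces your mollification-plus-Banach--Saks argument with a one-line citation, which is presumably what the authors have in mind when they omit the proof.
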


\noindent Note that $(\mathcal{E}^A,H_0^1(\Omega))$ is a regular Dirichlet forms, too. This result holds true without any assumption on the regularity of $\partial \Omega$.

\section{Proof of \autoref{thm:Mosco-convergence}}\label{sec:main-proof}

The aim of this section is to provide the proof of \autoref{thm:Mosco-convergence}. Let us begin with a simple but important observation.  
 
\begin{proposition}\label{prop:elliptic-matrix}
	Under condition \eqref{eq:elliptic-condition} and \eqref{eq:integrability-condition}, the symmetric matrix $A$ defined as in  \eqref{eq:coef-matrix} has bounded coefficients and satisfies the ellipticity condition. Precisely, we have
	\begin{align*}
	d^{-1}\Lambda^{-1}|\xi|^2\leq \langle A(x) \xi, \xi \rangle \leq d^{-1}\Lambda |\xi|^2, \quad\text{ for every } x, \xi \in \R^d \,.
	\end{align*}
\end{proposition}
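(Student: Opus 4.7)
The plan is to reduce the ellipticity bound for $A(x)$ to a direct computation of the quadratic form $\langle A(x)\xi,\xi\rangle$ using radial symmetry of $\nu^\alpha$, and then deduce boundedness of the individual entries from the bilateral bound.

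First, by linearity and the definition \eqref{eq:coef-matrix}, for any $x, \xi \in \R^d$ and any $\delta \in (0,1]$,
\[
\langle A(x)\xi,\xi\rangle \;=\; \lim_{\alpha\to 2^-} \int_{B_\delta} \langle h,\xi\rangle^2\, J^\alpha(x,x+h)\,\d h.
\]
Since $\delta \leq 1$, condition \eqref{eq:elliptic-condition} applies pointwise on $B_\delta$, yielding
\[
\Lambda^{-1} \!\int_{B_\delta} \langle h,\xi\rangle^2\, \nu^\alpha(h)\,\d h \;\leq\; \int_{B_\delta} \langle h,\xi\rangle^2\, J^\alpha(x,x+h)\,\d h \;\leq\; \Lambda \!\int_{B_\delta} \langle h,\xi\rangle^2\, \nu^\alpha(h)\,\d h.
\]

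Next I would compute the bracketing integral explicitly. Using $\nu^\alpha(h) = |h|^{-2}\rho_{2-\alpha}(h)$, passing to polar coordinates $h = r\omega$, and invoking the standard identity $\int_{\mathbb{S}^{d-1}} \omega_i\omega_j\,\d\sigma(\omega) = \frac{|\mathbb{S}^{d-1}|}{d}\delta_{ij}$, one obtains
\[
\int_{B_\delta} \langle h,\xi\rangle^2\, \nu^\alpha(h)\,\d h \;=\; \int_{B_\delta} \frac{\langle h,\xi\rangle^2}{|h|^2}\, \rho_{2-\alpha}(h)\,\d h \;=\; \frac{|\xi|^2}{d}\int_{B_\delta} \rho_{2-\alpha}(h)\,\d h,
\]
where the last equality uses that $\rho_{2-\alpha}$ is radial. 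By the approximation property of $(\rho_\eps)$, namely $\int_{|h|>\delta}\rho_\eps\to 0$ combined with $\int_{\R^d}\rho_\eps = 1$, the integral $\int_{B_\delta}\rho_{2-\alpha}\,\d h$ tends to $1$ as $\alpha\to 2^-$. Combining gives the two-sided estimate
\[
d^{-1}\Lambda^{-1}|\xi|^2 \;\leq\; \langle A(x)\xi,\xi\rangle \;\leq\; d^{-1}\Lambda|\xi|^2.
\]

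Finally, boundedness of the coefficients $a_{ij}(x)$ follows from the bilateral bound just established: since $A(x)$ is symmetric and the quadratic form is bounded uniformly in $x$, taking $\xi = e_i$ gives $|a_{ii}(x)|\leq d^{-1}\Lambda$, and positive semi-definiteness together with the Cauchy--Schwarz inequality for the form $\langle A(x)\cdot,\cdot\rangle$ yields $|a_{ij}(x)| \leq \sqrt{a_{ii}(x)a_{jj}(x)} \leq d^{-1}\Lambda$ for all $i,j$ and all $x \in \R^d$. No real obstacle arises; the only subtle point is choosing $\delta \leq 1$ so that \eqref{eq:elliptic-condition} can be invoked, and confirming (via the remark following \eqref{eq:translation-invariance}) that the value of $a_{ij}(x)$ is indeed independent of this choice of $\delta$ so that the argument is internally consistent.
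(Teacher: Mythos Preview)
Your proof is correct and follows essentially the same approach as the paper: sandwich $\int_{B_\delta}\langle h,\xi\rangle^2 J^\alpha(x,x+h)\,\d h$ via \eqref{eq:elliptic-condition}, evaluate the radial integral $\int_{B_\delta}\langle h,\xi\rangle^2\nu^\alpha(h)\,\d h = d^{-1}|\xi|^2\int_{B_\delta}\rho_{2-\alpha}$ by rotational symmetry, and pass to the limit using the approximate-identity property of $(\rho_\eps)$. The paper fixes $\delta=1$ and leaves the boundedness of the individual entries $a_{ij}(x)$ implicit, whereas you make both points explicit, but the argument is the same.
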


\begin{proof}
	Let $x, \xi \in \R^d$ and $|h|\leq 1$. Then Condition \eqref{eq:elliptic-condition} implies that 
	\begin{align*}
	\Lambda^{-1} \nu^\alpha(h)[\xi\cdot h]^2  \leq J^\alpha(x,x+h)[\xi\cdot h]^2 \leq \Lambda \nu^\alpha(h)[\xi\cdot h]^2\quad\text{for every }\quad \xi \in \R^d
	\end{align*}
	Note that,  by definition of the matrix $A$
	\begin{align*}\lim_{\alpha\to 2^-}\int\limits_{|h|\leq 1} J^\alpha(x,x+h)[\xi\cdot h]^2\d h= \langle A(x) \xi, \xi \rangle \,.
	\end{align*}

	\noindent On the other hand, by rotationally invariance of the Lebesgue measure, we have
	
	\begin{align*}
	&\lim_{\alpha\to 2^-}\int\limits_{|h|\leq 1}\nu^\alpha(h)[\xi\cdot h]^2\d h = \lim_{\alpha\to 2^-}\int\limits_{|h|\leq 1}\nu^\alpha(h)\sum_{1\leq i,j\leq d}\xi_i\xi_j h_ih_j\d h\\
	&=\lim_{\alpha\to 2^-} \sum_{1\leq i\leq d}\xi_i^2 \int\limits_{|h|\leq 1} h_1^2 \nu^\alpha(h)\d h=\lim_{\alpha\to 2^-} |\xi|^2 \int\limits_{|h|\leq 1} h_1^2 \nu^\alpha(h)\d h\\	
	&=\lim_{\alpha\to 2^-} |\xi|^2 d^{-1}\int\limits_{|h|\leq 1} \sum_{1\leq i\leq d}h_i^2 \nu^\alpha(h)\d h =\lim_{\alpha\to 2^-} |\xi|^2 d^{-1}\int\limits_{|h|\leq 1}  \rho_{2-\alpha}(h)\d h\\
	&= |\xi|^2 d^{-1}\,,
	\end{align*}
	which ends the proof.
\end{proof}

Let us recall the notion of Mosco convergence  on $L^2$- spaces according to \cite[Definition 2.1.1.]{Mos94}.
\begin{definition}[Mosco-convergence]\label{def:mosco}
Assume  $(\mathcal{E}^n, \mathcal{D}(\mathcal{E}^n))_{n\in \mathbb{N}}$ and $(\mathcal{E}, \mathcal{D}(\mathcal{E}))$  are  quadratic forms with dense domains in $L^2(E,\mu)$ where $(E,\mu ) $ is a measure space. One says that the sequence $(\mathcal{E}^n, \mathcal{D}(\mathcal{E}^n))_{n\in \mathbb{N}}$ converges in $L^2(E,\mu)$  in the Mosco sense  to $(\mathcal{E}, \mathcal{D}(\mathcal{E}))$ if the following two conditions are satisfied.

\medskip

\noindent \textbf{Limsup:} For every $u\in L^2(E,\mu)$ there exists a sequence $(u_n)_n$ in $ L^2(E,\mu)$ such that   $u_n\in  \mathcal{D}(\mathcal{E}^n)$,  $u_n\to u$ (read $u_n$ strongly converges to $u$) in  $ L^2(E,\mu)$ and 
\[\limsup_{n\to \infty} \mathcal{E}^n(u_n,u_n) \leq \mathcal{E}(u,u). \]
\textbf{Liminf:} For every sequence, $(u_n)_n$   with   $u_n\in  \mathcal{D}(\mathcal{E}^n)$ and every $u\in \mathcal{D}(\mathcal{E})$ such that   $u_n \rightharpoonup u$ (read $u_n$ weakly converges to $u$) in  $ L^2(E,\mu)$ we have, 
\[\mathcal{E}(u,u)\leq  \liminf_{n\to \infty}  \mathcal{E}^n(u_n,u_n).\] 
\end{definition}

\begin{remark}
(i) It is worth emphasizing that, combining the $\limsup$ and $\liminf$ conditions,   the  $\limsup$ condition is equivalent to the existence of a sequence $(u_n)_n$ in $ L^2(E,\mu)$ such that   $u_n\in  \mathcal{D}(\mathcal{E}^n)$,  $u_n\to u$  in $ L^2(E,\mu)$ and 
\[\lim_{n\to \infty} \mathcal{E}^n(u_n,u_n)=\mathcal{E}(u,u). \]
(ii) Also note that, replacing the weak convergence   in the $\liminf$ condition by the strong convergence, one recovers the  famous  concept of Gamma convergence. 
\end{remark}

\medskip

The  following Theorem is reminiscent of \cite[Theorem 2]{BBM01}. 
\begin{theorem}\label{thm:quadratic-convergence-BBM}
Let $D\subset \mathbb{R}^d$  be an open extension domain and bounded. Then, under assumptions \eqref{eq:elliptic-condition} and \eqref{eq:integrability-condition} we have
\begin{align}\label{eqquadratics-limit}
 \lim_{\alpha \to 2^-}\iil_{D D} (u(x)-u(y))^2 J^\alpha(x,y)\d x \, \d y = \il_{D} \langle A(x) \nabla u(x), \nabla u(x) \rangle \d x.
\end{align}
for all $u\in  H^{1}(D)$. In particular, if $J^\alpha= 2d|x-y|^{-2}\rho_{2-\alpha}(x-y)$ or $J^\alpha= J_k^\alpha$ with $k=1,2,3$ then 

\begin{align*}\label{eqquadratics-limit-special}
 \lim_{\alpha \to 2^-} \frac{1}{2}\iil_{D D} (u(x)-u(y))^2 J^\alpha(x,y)\d x \, \d y = \il_{D} |\nabla u(x) |^2 \d x.
\end{align*}
\end{theorem}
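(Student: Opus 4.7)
The plan is to reduce the statement to the case of smooth test functions via a density argument, and then carry out a Bourgain--Brezis--Mironescu-type Taylor expansion separately on the bulk of $D$ and on a boundary strip.

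First, I would use the extension property of $D$ together with \autoref{lem-cont-qua-form} to reduce to $u \in C^2(\overline D)$ (in fact a restriction of some $\varphi \in C^\infty_c(\mathbb{R}^d)$). Indeed, by bilinearity, $\mathcal{E}^\alpha_D(u,u) - \mathcal{E}^\alpha_D(\varphi,\varphi) = \mathcal{E}^\alpha_D(u-\varphi, u+\varphi)$, and Cauchy--Schwarz combined with \autoref{lem-cont-qua-form} gives
\[ |\mathcal{E}^\alpha_D(u,u) - \mathcal{E}^\alpha_D(\varphi,\varphi)| \leq C \|u-\varphi\|_{H^1(D)} \bigl(\|u\|_{H^1(D)} + \|\varphi\|_{H^1(D)}\bigr), \]
with $C$ independent of $\alpha \in (\alpha_0, 2)$. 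Since $\mathcal{E}^A$ is clearly continuous on $H^1(D)$ by \autoref{prop:elliptic-matrix}, a standard $\varepsilon/3$ argument shows it is enough to establish \eqref{eqquadratics-limit} for smooth $u$.

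For smooth $u$, fix $\delta \in (0,1)$ and split the double integral into a near part $|x-y| \leq \delta$ and a far part $|x-y| > \delta$. For the far part, $(u(x)-u(y))^2 \leq 4 \|u\|_\infty^2$ and condition \eqref{eq:integrability-condition} give
\[ \iint_{DD \cap \{|x-y|>\delta\}} (u(x)-u(y))^2 J^\alpha(x,y)\,\d x\,\d y \leq 4\|u\|_\infty^2 |D| \sup_x \!\!\int_{|h|>\delta}\!\! J^\alpha(x,x+h)\,\d h \;\xrightarrow{\alpha\to 2^-}\; 0. \]
For the near part, write $u(x+h) - u(x) = \nabla u(x)\cdot h + r(x,h)$ with $|r(x,h)| \leq C\|D^2 u\|_\infty |h|^2$. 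The error contributions involve $\int_{|h|\leq \delta} |h|^k J^\alpha(x,x+h)\,\d h$ for $k=3,4$, which by \eqref{eq:elliptic-condition} are bounded respectively by $\Lambda \delta \int \rho_{2-\alpha}$ and $\Lambda \delta^2 \int \rho_{2-\alpha}$, hence are $O(\delta)$ uniformly in $\alpha$.

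The main term then equals
\[ \int_D \sum_{i,j} \partial_i u(x)\,\partial_j u(x) \int_{|h|\leq \delta,\; x+h \in D} h_i h_j \, J^\alpha(x,x+h)\,\d h\,\d x. \]
To handle this, split $D = D_\delta \cup (D\setminus D_\delta)$ with $D_\delta = \{x \in D : \dist(x,\partial D) > \delta\}$. On $D_\delta$, the constraint $x+h \in D$ is automatic, so by definition \eqref{eq:coef-matrix} the inner integral converges pointwise to $a_{ij}(x)$; by \eqref{eq:elliptic-condition} it is uniformly bounded, so dominated convergence yields $\int_{D_\delta} \langle A \nabla u, \nabla u\rangle\,\d x$. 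The contribution from $D \setminus D_\delta$ is bounded by $C\|\nabla u\|_\infty^2 |D \setminus D_\delta|$, which tends to $0$ as $\delta \to 0$ since $\partial D$ is Lipschitz hence has measure zero. Letting first $\alpha \to 2^-$ and then $\delta \to 0$ completes the proof.

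The main obstacle I expect is the interaction between the boundary constraint $x+h \in D$ and the Taylor expansion: one must verify that the restriction does not spoil the limit \eqref{eq:coef-matrix}, which is a limit over full balls. The splitting into $D_\delta$ and its complement, together with the trivial upper bound $\int_{|h|\leq \delta} |h|^2 J^\alpha(x,x+h)\,\d h \leq \Lambda$ coming from \eqref{eq:elliptic-condition}, is what makes the argument go through; the special structure $\nu^\alpha(h) = |h|^{-2}\rho_{2-\alpha}(h)$ with $\int \rho_{2-\alpha} = 1$ is essential for the uniformity in $\alpha$.
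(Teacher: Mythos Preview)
Your proof is correct and follows the same strategy as the paper: density via \autoref{lem-cont-qua-form}, a near/far splitting, Taylor expansion of $(u(x)-u(y))^2$, and dominated convergence. Your explicit treatment of the constraint $y\in D$ via the boundary strip $D\setminus D_\delta$ and the double limit $\alpha\to 2^-$, $\delta\to 0$ is in fact more careful than the paper (which works with the fixed cutoff $\delta=1$ and silently drops the restriction $y\in D$ in the inner integral); one minor remark is that $|D\setminus D_\delta|\to 0$ needs only that $D$ be open and bounded, so you need not invoke a Lipschitz assumption that is not part of the hypotheses.
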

\noindent In the proof we will make use of the following simple observation.

\begin{lemma}\label{lem:concentration}
Assume $\beta\geq 0$ and $R>0$. Then, obviously, $\int_{|x|\leq R}\rho_{2-\alpha}(x)\d x \leq 1$. Moreover, 
\begin{align*}
\lim\limits_{\alpha\to 2^-} \int_{|x|\leq R}|x|^\beta\rho_{2-\alpha}(x)\d x = 
\begin{cases}
1 \quad &\text{ if } \beta=0 \,, \\
0 &\text{ if }\beta > 0 \,.
\end{cases}
\end{align*}
\end{lemma}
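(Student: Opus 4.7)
My plan is to verify both claims by exploiting exactly the three defining properties of the family $(\rho_\eps)$ listed in \autoref{def:nu-alpha}, namely nonnegativity, unit mass, and concentration of mass at the origin. The almost-decreasing property of $|h|^{-2}\rho_\eps(h)$ is not needed for this statement. The preliminary bound $\int_{|x|\leq R}\rho_{2-\alpha}(x)\d x\leq 1$ is immediate from $\rho_{2-\alpha}\geq 0$ and $\int_{\R^d}\rho_{2-\alpha}=1$.

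For the case $\beta=0$, I would simply write
\[\int_{|x|\leq R}\rho_{2-\alpha}(x)\d x = 1-\int_{|x|>R}\rho_{2-\alpha}(x)\d x,\]
and observe that the second term tends to $0$ as $\alpha\to 2^-$ (i.e.\ $2-\alpha\to 0^+$) by the third bullet in \autoref{def:nu-alpha} applied with $\delta=R$.

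For $\beta>0$, the key trick is to introduce an auxiliary radius $\delta\in(0,R)$ and split the domain of integration. On $\{|x|\leq \delta\}$ the integrand satisfies $|x|^\beta\leq \delta^\beta$, and on $\{\delta<|x|\leq R\}$ it satisfies $|x|^\beta\leq R^\beta$. Using $\int_{\R^d}\rho_{2-\alpha}=1$ in the first region and the concentration property in the second, this gives
\[\int_{|x|\leq R}|x|^\beta\rho_{2-\alpha}(x)\d x\leq \delta^\beta + R^\beta\int_{|x|>\delta}\rho_{2-\alpha}(x)\d x.\]
Taking $\limsup_{\alpha\to 2^-}$ eliminates the second term, leaving $\delta^\beta$. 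Since $\delta>0$ was arbitrary and $\beta>0$, letting $\delta\to 0^+$ shows the limit is $0$.

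I do not anticipate any real obstacle here: the argument is a textbook exercise in approximate identities, and no subtlety from the anisotropy or near-diagonal singularity enters. The only small point worth stating cleanly is that one should take the $\limsup$ in $\alpha$ \emph{before} sending $\delta\to 0^+$, so the order of limits is used correctly.
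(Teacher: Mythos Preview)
Your proposal is correct. The paper does not actually supply a proof of this lemma; it introduces it as a ``simple observation'' and proceeds directly to use it in the proof of \autoref{thm:quadratic-convergence-BBM}. Your argument---splitting off the complement $\{|x|>R\}$ for the case $\beta=0$, and for $\beta>0$ splitting $\{|x|\leq R\}$ at an auxiliary radius $\delta$, bounding by $\delta^\beta + R^\beta\int_{|x|>\delta}\rho_{2-\alpha}$, then taking $\limsup_{\alpha\to 2^-}$ before $\delta\to 0^+$---is exactly the standard approximate-identity computation the authors evidently have in mind, and uses only the three properties from \autoref{def:nu-alpha} that you identify.
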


\begin{proof}[Proof of \autoref{thm:quadratic-convergence-BBM}] \autoref{lem-cont-qua-form} suggests that  it suffices to prove \eqref{eqquadratics-limit}  for   $u$ in a  dense subset of $H^{1}(D)$. For instance, let us choose $u\in C^2(\overline{D}) $. 
 
 \begin{align} 
 &\iil_{D\times  D \cap \{|x-y|\geq 1\}} (u(x)-u(y))^2 J^\alpha(x,y)\d x \, \d y \nonumber\\
 & \quad \leq  4\il_{D}u^2(x) \d x \il_{ |x-y|\geq 1} J^\alpha(x,y) \, \d y \to 0, ~\hbox{as $\alpha\to 2^-$} \,. 
\end{align}
\noindent Now, we consider the mapping $F:D\times (0,2)\to \mathbb{R}$ with
 \begin{align*}
 F(x,\alpha):= \il_{|x-y|\leq 1} (u(x)-u(y))^2 J^\alpha(x,y) \d y.
 \end{align*} 
  By Taylor expansion we obtain
 \begin{align*}
 u(y)-u(x) = \nabla u (x)\cdot(y-x)+ r_1(x,y)|x-y|^2
 \end{align*}
therefore,  we can write 
\begin{align*}
 (u(y)-u(x))^2 = (\nabla u (x)\cdot(y-x))^2+ r(x,y)|x-y|^3
 \end{align*} 
 with bounded remainders $r(x,y)$  and $r_1(x,y)$. Hence, $F(x,\alpha)$ can be written as  
  \begin{align*}
  F(x,\alpha)&= \il_{|x-y|\leq 1}  [\nabla u(x)\cdot (y-x)]^2 J^\alpha(x,y) \d y+ R(x,\alpha)\,. 
\end{align*}
with
\begin{align*}
|R(x,\alpha)| &:=\Big|\,  \il_{|x-y|\leq 1} r(x,y) |x-y|^3 J^\alpha(x,y) \d y\Big| \\
&\leq   C  \il_{|x-y|\leq 1 } |x-y|\rho_{2-\alpha}(x-y)\d y \to 0 ~~\mbox{ as } \alpha \to 2^- \,.
\end{align*}
Here, we have applied \eqref{eq:elliptic-condition} and \autoref{lem:concentration}. Finally, we obtain
\begin{align*}
 \lim_{\alpha\to 2^-} F(x,\alpha) &= \lim_{\alpha\to 2^-} \il_{|x-y|\leq 1}  [\nabla u(x)\cdot (y-x)]^2  J^\alpha(x,y) \d y \\
 &=  \lim_{\alpha\to 2^-} \il_{|x-y|\leq 1} \left[\sum_{i=1}^{d} \partial_i u(x)  (y_i-x_i)\right]^2  J^\alpha(x,y) \d y \\
 &= \sum_{0\leq i,j\leq d}   \partial_i u(x)  \partial_j u(x)  \lim_{\alpha\to 2^-} \il_{|x-y|\leq 1} ( y_i-x_i)(y_j-x_j) J^\alpha(x,y) \d y \\
 &= \sum_{0\leq i,j\leq d}  a_{ij} (x)\partial_i u(x)  \partial_j u(x) = \langle A (x)\nabla u(x), \nabla u(x)\rangle.
\end{align*}
In particular,  if $J^\alpha(x,y)\mathbbm{1}_{B_1}(x-y)= \frac{C_{d,\alpha }}{2}|x-y|^{-d-\alpha}$ then,  thanks to the rotationally invariance of the Lebesgue measure  we get $a_{ij}(x) =0$ for $i\neq j$ and 

 \begin{align*}
a_{ii}(x)&=   \lim_{\alpha\to 2^-}\frac{C_{d,\alpha}}{2}  \il_{|x-y|\leq 1} h_i^2 |h|^{-d-\alpha} \d h=\lim_{\alpha\to 2^-}  \frac{C_{d,\alpha}}{2d} \il_{|x-y|\leq 1} |h|^{2-d-\alpha} \d h\\
 %
 %
 &=\lim_{\alpha\to 2^-}  \frac{C_{d,\alpha}}{2d \omega_{d-1} (2-\alpha)}  = 1.
 \end{align*}
 The fact that,  $  \frac{C_{d,\alpha}}{2d \omega_{d-1} (2-\alpha)}  \to  1$ can be found in  \cite{Hitchhiker}. Similar conclusion also  holds if $J^\alpha(x,y)\mathbbm{1}_{B_1}(x-y)=d| x-y|^{-2}\rho_{2-\alpha}(x-y)$.    Now noticing that the function $F(x, \alpha) $ is bounded on  $D\times (0,2) $, the Lebesgue's dominated convergence theorem yields

 \begin{align*}
 \lim_{\alpha \to2^-} \hspace*{-3ex} \iil_{D\times D\cap \{|x-y|\leq1 \}} \hspace*{-2ex} (u(x)-u(y))^2J^\alpha(x,y)\d x \, \d y = \lim_{\alpha \to2^-} \il_{D} F(x,\alpha)\d x = \il_{D} \langle A(x)\nabla u(x), \nabla u(x) \rangle \ \d x.
 \end{align*}
Altogether, we obtain  the required result. 
\end{proof}

\begin{lemma}\label{lem-liminf}
Let $\Omega$ be a bounded and open subset  of  $\mathbb{R}^d$. Assume $(u_n)_{n}\subset L^2(\Omega) $ is a  sequence converging in $L^2(\Omega)$ to some $u \in H^{1}(\Omega)$. 
Then, under the assumptions $\eqref{eq:elliptic-condition}$ and \eqref{eq:translation-invariance}, for any given  sequence $\alpha_n\in (0,2)$ such that $\alpha_n\to 2^- $ we have
\begin{align}\label{eq:convex-ineq}
\il_{\Omega} \langle A\nabla u(x), \nabla u(x) \rangle  \d x  \leq \liminf\limits_{n \to \infty} \iil_{\Omega\Omega} (u_{n}(x) -u_{n}(y))^2 J^{\alpha_n}(x,y)\d  x\d y.
\end{align}
\end{lemma}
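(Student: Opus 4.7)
The plan is to deduce the liminf bound from a Cauchy--Schwarz / duality argument paired with the convergence of the polarised bilinear form. Set $I_n(f,g):=\iil_{\Omega\Omega}(f(x)-f(y))(g(x)-g(y))J^{\alpha_n}(x,y)\d x\,\d y$, and assume without loss of generality that $\liminf_n I_n(u_n,u_n)=:L<\infty$ (otherwise there is nothing to prove); I then pass to a subsequence realising the liminf. For any smooth test function $\varphi$ (I will take $\varphi\in C^\infty_c(\R^d)$, whose restrictions to $\Omega$ form a dense subspace of $H^1(\Omega)$ by the extension property of Lipschitz domains), the pointwise Cauchy--Schwarz inequality integrates to
\[ |I_n(u_n,\varphi)|^2 \le I_n(u_n,u_n)\cdot I_n(\varphi,\varphi), \]
and \autoref{thm:quadratic-convergence-BBM} applied to $\varphi\in H^1(\Omega)$ gives $\lim_n I_n(\varphi,\varphi)=\int_\Omega\langle A\nabla\varphi,\nabla\varphi\rangle\,\d x$.

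The heart of the argument is to identify $\lim_n I_n(u_n,\varphi)=\int_\Omega\langle A\nabla u,\nabla\varphi\rangle\,\d x$. I would split $I_n(u_n,\varphi)=I_n(u,\varphi)+I_n(u_n-u,\varphi)$. The first summand converges to $\int_\Omega\langle A\nabla u,\nabla\varphi\rangle\,\d x$ by polarising \autoref{thm:quadratic-convergence-BBM} across the three $H^1(\Omega)$-functions $u$, $\varphi$ and $u+\varphi$. For the second summand the translation invariance \eqref{eq:translation-invariance} is decisive: writing $J^{\alpha_n}(x,y)=k^{\alpha_n}(x-y)$ with $k^{\alpha_n}$ radial and even, extending $u_n-u$ by zero to $\tilde u_n\in L^2(\R^d)$, and using the convolution identity
\[ \iil_{\R^d\R^d}\bigl(\tilde u_n(x)-\tilde u_n(y)\bigr)\bigl(\varphi(x)-\varphi(y)\bigr)k^{\alpha_n}(x-y)\,\d x\,\d y = \int_{\R^d}\tilde u_n(x)\psi_n(x)\,\d x, \]
where $\psi_n(x):=\int k^{\alpha_n}(h)\bigl[2\varphi(x)-\varphi(x+h)-\varphi(x-h)\bigr]\,\d h$, a Taylor expansion at $x$ (the odd term cancels exactly by symmetry of $k^{\alpha_n}$) combined with \eqref{eq:elliptic-condition}, \eqref{eq:integrability-condition} and the definition \eqref{eq:coef-matrix} of $A$ shows $\psi_n\to -\operatorname{tr}(AD^2\varphi)$ pointwise and uniformly bounded in $L^\infty$; together with $\tilde u_n\to 0$ strongly in $L^2(\R^d)$ this forces the $\R^d$-integral to vanish. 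The boundary defect $2\iil_{\Omega\Omega^c}(u_n-u)(x)\bigl(\varphi(x)-\varphi(y)\bigr)k^{\alpha_n}(x-y)\,\d x\,\d y$ accounting for the difference between $\iil_{\R^d\R^d}$ and $\iil_{\Omega\Omega}$ is to be handled via Cauchy--Schwarz in the measure $k^{\alpha_n}(x-y)\,\d x\,\d y$: the $\varphi$-factor $\iil_{\Omega\Omega^c}(\varphi(x)-\varphi(y))^2 k^{\alpha_n}$ stays uniformly bounded thanks to the complement-value convergence \eqref{eq:new-frac-forms_to_gradient-form}, while the $(u_n-u)$-factor vanishes using the $L^2$-convergence together with the tail bound \eqref{eq:integrability-condition}.

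Inserting these limits into Cauchy--Schwarz yields, for every admissible $\varphi$,
\[ \Big(\int_\Omega \langle A\nabla u,\nabla\varphi\rangle\,\d x\Big)^2 \le L\cdot \int_\Omega \langle A\nabla\varphi,\nabla\varphi\rangle\,\d x. \]
Since $A$ is uniformly elliptic by \autoref{prop:elliptic-matrix} and $\{\nabla\varphi:\varphi\in C^\infty_c(\R^d)\}$ is dense in the subspace $\{\nabla f:f\in H^1(\Omega)\}$ of $L^2(\Omega;\R^d)$, picking $\varphi_k$ with $\nabla\varphi_k\to\nabla u$ in $L^2(\Omega)$ saturates the $A$-weighted Cauchy--Schwarz inequality and produces $\int_\Omega\langle A\nabla u,\nabla u\rangle\,\d x\le L$, which is the asserted bound.

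The step I anticipate to be the most delicate is the vanishing of the boundary defect $\iil_{\Omega\Omega^c}(u_n-u)(x)(\varphi(x)-\varphi(y))k^{\alpha_n}$: for singular kernels (cf.~\autoref{ex:J-guys-singular}) the nonlocal interaction across $\partial\Omega$ is non-trivial, and extending $u_n-u$ by zero to $\R^d$ produces a term whose pointwise size is controlled only by quantities like $d(x,\partial\Omega)^{-1}$ that do not lie in $L^2(\Omega)$. It is the interplay of translation invariance \eqref{eq:translation-invariance} with the complement-value BBM convergence \eqref{eq:new-frac-forms_to_gradient-form} (which keeps the $\varphi$-contribution bounded as $\alpha\to2^-$) and the $L^2$-convergence of $u_n-u$ that makes the defect collapse; without \eqref{eq:translation-invariance} a substantially more technical argument would be required, which is precisely why the stronger form of the theorem without (I) is announced elsewhere.
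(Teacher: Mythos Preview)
Your strategy is genuinely different from the paper's. The paper argues by mollification: it restricts to $\Omega_\delta=\{x\in\Omega:\dist(x,\partial\Omega)>\delta\}$, uses translation invariance together with Jensen's inequality to obtain $\mathcal{E}^{\alpha_n}_{\Omega_\delta}(u_n^\delta,u_n^\delta)\le \mathcal{E}^{\alpha_n}_{\Omega}(u_n,u_n)$ for $u_n^\delta=\phi_\delta*u_n$, shows $\mathcal{E}^{\alpha_n}_{\Omega_\delta}(u_n^\delta,u_n^\delta)\to \mathcal{E}^A_{\Omega_\delta}(u^\delta,u^\delta)$ via \autoref{lem-cont-qua-form} and \autoref{thm:quadratic-convergence-BBM}, and only lets $\delta\to0$ at the end. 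By always staying a fixed distance from $\partial\Omega$, the paper never has to confront the interaction across the boundary. Your duality/Cauchy--Schwarz approach is attractive and most of it is sound, but the treatment of the boundary defect has a real gap.

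Concretely, for your Cauchy--Schwarz bound on the defect you need either the $\varphi$-factor to vanish \emph{and} the $(u_n-u)$-factor to stay bounded, or vice versa. The $\varphi$-factor $\iil_{\Omega\Omega^c}(\varphi(x)-\varphi(y))^2 k^{\alpha_n}$ does tend to zero (this is \eqref{eqextern-lim}). But the other factor $\iil_{\Omega\Omega^c}(u_n-u)^2(x)\,k^{\alpha_n}(x-y)\,\d y\,\d x=\int_\Omega (u_n-u)^2(x)\,G_n(x)\,\d x$ with $G_n(x)=\int_{\Omega^c}k^{\alpha_n}(x-y)\,\d y$ is \emph{not} controlled by the hypotheses: for the singular examples of \autoref{ex:J-guys-singular} one has $G_n(x)\asymp(2-\alpha_n)\,d(x,\partial\Omega)^{-\alpha_n}$, which is not in $L^\infty(\Omega)$, and neither strong $L^2$-convergence of $u_n-u$ nor the tail condition \eqref{eq:integrability-condition} (which concerns $|h|>\delta$ for \emph{fixed} $\delta$) yields a bound. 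The alternative pairing $\int_\Omega (u_n-u)\,\Psi_n$ with $\Psi_n(x)=\int_{\Omega^c}(\varphi(x)-\varphi(y))k^{\alpha_n}(x-y)\,\d y$ fails for the same reason: $|\Psi_n(x)|\lesssim (2-\alpha_n)\,d(x,\partial\Omega)^{1-\alpha_n}$, and for $\alpha_n>3/2$ this is not in $L^2(\Omega)$, so you cannot pair it with $u_n-u\in L^2$. You correctly flagged this step as the delicate one, but the mechanism you sketch does not close. A secondary point: the final density step requires restrictions of $C_c^\infty(\R^d)$ to be dense in $H^1(\Omega)$, which uses an extension property not assumed in the lemma (the paper states it for arbitrary bounded open $\Omega$); the mollification proof avoids this as well.
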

\begin{proof} 
We borrow the technique from \cite{Brezis-const-function} and it is worth mentioning that an inequality similar to \eqref{eq:convex-ineq} appears in \cite{Ponce2004}. Assume $0 \in \Omega$ otherwise one can consider any arbitrary point $x_0$ in $\Omega$. Let us fix  $\delta >0$ small enough  and put, $\Omega_\delta =\{x\in \Omega: \operatorname{dist}(x,\partial\Omega)>\delta\}$. 
Let  consider $\phi \in C_c^{\infty}(\mathbb{R}^d)$ supported in $B_1(0)$ be such that $\phi \geq 0$ and $ \int_{} \phi = 1$.  
Define the mollifier $\phi_\delta(x)= \frac{1}{\delta^d}\phi\left(\frac{x}{\delta}\right)$ with support in $B_\delta(0)$   and let $u^\delta_{n} = u_{n} *\phi_\delta$  denote the convolution of $u_n $ and $\phi_\delta$. For sake of the simplicity we will assume $u_n,$ and $u$ are extended by zero outside of $\Omega$.
Assume $z\in\Omega_\delta  $ and $|h|\le \delta$ then, $z-h\in \Omega_\delta-h \subset \Omega$ so that,  the translation invariance condition \eqref{eq:translation-invariance} implies,
 \begin{align*}
 \iil_{\Omega_\delta\Omega_\delta} \left( u_{n}(x-h) -u_{n}(y-h)\right)^2 J^{\alpha_n}(x,y)\d  x\d y \leq  \iil_{\Omega\Omega}(u_{n}(x) -u_{n}(y))^2 J^{\alpha_n}(x,y)\d  x\d y.  
 \end{align*}
Thus given that, $\int_{} \phi_\delta = 1$, integrating both side over the ball $B_\delta(0)$ with respect to 
$\phi_\delta(h)dh$ and employing Jensen's inequality afterwards, yields 
\begin{align}\label{eqmolification-convex-Jessen}
 \iil_{\Omega_\delta\Omega_\delta} \left(u^\delta_{n}(x) -u^\delta_{n}(y)\right)^2 J^{\alpha_n}(x,y)\d  x\d y \leq  \iil_{\Omega\Omega} \left( u_{n}(x) -u_{n}(y)\right)^2 J^{\alpha_n}(x,y)\d  x\d y.  
\end{align}

By Lemma \ref{lem-cont-qua-form} there is a constant C independent on $\alpha_n$ for which, 
\begin{align*}
\left| (\mathcal{E}^{\alpha_n}_{\Omega_\delta}(u_n^\delta, u_n^\delta))^{1/2}-(\mathcal{E}^{\alpha_n}_{\Omega_\delta}(u^\delta, u^\delta))^{1/2}\right|
&\leq (\mathcal{E}^{\alpha_n}_{\Omega_\delta}(u_n^\delta-u^\delta, u_n^\delta-u^\delta))^{1/2}\\
&\leq  C\|u_n^\delta-u^\delta\|_{H^1(\Omega_\delta )}\\
&\leq C \|\phi_\delta\|_{W^{1,\infty}(B_\delta )}\|u_n-u\|_{L^2(\Omega )}.
\end{align*}
 Which implies,
  \begin{align*}
\left| (\mathcal{E}^{\alpha_n}_{\Omega_\delta}(u_n^\delta, u_n^\delta))^{1/2}-(\mathcal{E}^{\alpha_n}_{\Omega_\delta}(u^\delta, u^\delta))^{1/2}\right|\leq C \|\phi_\delta\|_{W^{1,\infty}(B_\delta )}\|u_n-u\|_{L^2(\Omega )} \to 0.
\end{align*}
 since by assumption,  $\|u_{n}-u\|_{L^2(\Omega)}\to 0$.  On the other hand, Theorem \ref{thm:quadratic-convergence-BBM} yields  that,  $\mathcal{E}^{\alpha_n}_{\Omega_\delta}(u^\delta, u^\delta)\to  \mathcal{E}^{A}_{\Omega_\delta}(u^\delta, u^\delta)$. Thus, we have shown that 
 \begin{align*}
 \mathcal{E}^{\alpha_n}_{\Omega_\delta}(u_n^\delta, u_n^\delta) \to  \mathcal{E}^{A}_{\Omega_\delta}(u^\delta, u^\delta).
\end{align*}  

Inserting this in \eqref{eqmolification-convex-Jessen}, we obtain

\begin{align*}
\il_{\Omega_\delta} \langle A\nabla u^\delta(x), \nabla u^\delta(x) \rangle  \d x  \leq \liminf\iil_{\Omega\Omega} (u_{n}(x) -u_{n}(y))^2 J^{\alpha_n}(x,y)\d  x\d y.
\end{align*}
Given that $u \in H^{1}(\Omega)$, it is clear that  $\nabla(\phi_\delta*u)=\phi_\delta* \nabla u$ and hence the desired inequality follows by letting $\delta \to 0^+$ since $\| \phi_\delta* \nabla u - \nabla u \|_{L^2(\Omega)}\to 0$ as $\delta \to 0^+$.
\end{proof}

Finally, we now are in the position to prove our main result, \autoref{thm:Mosco-convergence}.

\begin{proof}[Proof of \autoref{thm:Mosco-convergence}] Note that $C_c^\infty(\mathbb{R}^d) \subset V_{\nu^\alpha}(\Omega|\R^d)$  and $V_{\nu^\alpha}(\Omega|\R^d) \big|_\Omega \subset   H_{\nu^{\alpha}}( \Omega)   \subset L^{2}( \Omega) $. Hence  the denseness of domains in $L^2(\Omega)$ readily follows from \autoref{thm:density}. We consider the "$\limsup$" and the "$\liminf$"-part separately. 

\medskip	 

\par \textbf{Limsup:}
Let $u\in L^2(\Omega)$, if  $u \not\in H^1(\Omega)$ then the $\limsup$ statement holds true since $ \mathcal{E}^A(u,u)=\infty$. Now if  $u \in H^1(\Omega)$. By identifying  $u$ to one of its   extension $\overline{u}\in  H^1(\mathbb{R}^d)$, for sake of simplicity we can always assume that $u \in H^1(\mathbb{R}^d) $. On the one hand, Theorem \ref{thm:quadratic-convergence-BBM} shows that $\lim\limits_{\alpha\to 2^{-}} \mathcal{E}^\alpha_{\Omega}(u,u)=\mathcal{E}^A(u,u)$. On the other hand,  since by Theorem \ref{thm:density}, $C_c^\infty(\mathbb{R}^d)$ is dense in $H^1(\mathbb{R}^d)\cap V_{\nu^\alpha}(\Omega|\R^d)$ and 
\begin{align*}
 \mathcal{E}^\alpha(u,u)=  \mathcal{E}^\alpha_{\Omega}(u,u) + 2\iil_{\Omega\Omega^c} (u(x)-u(y))^2J^\alpha(x,y)\d x\d y
\end{align*}
it remains to show that,  for $ u \in C_c^\infty(\mathbb{R}^d)$
\begin{align}\label{eqextern-lim}
 \iil_{\Omega\Omega^c} (u(x)-u(y))^2J^\alpha(x,y)\d x\d y\to 0,\quad \text{as $\alpha\to 2^-$.}
\end{align}
To this end, let us assume $u\in C_c^\infty(\mathbb{R}^d)$.  Then we have  
\[ |u(y)-u(x)|^2  \leq \|\nabla u\|^2_{\infty} | x-y|^2. \]

Let $R>0$ large enough such that, $\operatorname{supp} u\subset B_{R/2}(0)$ and for fix $x\in \Omega$, let   $\delta_x= dist(x, \partial \Omega)>0 $, we obtain the following estimates
%

\begin{align*}
 \int_{\Omega^c} \frac{(u(x) -u(y))^2}{|x-y|^{2}} \rho_{2-\alpha}(x-y) dy &=\hspace*{-2ex}  \int\limits_{R>|x-y|> \delta_x} \hspace*{-3ex} \frac{(u(x) -u(y))^2}{|x-y|^{2}} \rho_{2-\alpha}(x-y)\d y+  u^2(x) \hspace*{-2ex} \int\limits_{|x-y|\geq  R} \hspace*{-2ex} \frac{ \rho_{2-\alpha}(x-y) }{|x-y|^{2}} \d y\\
  &\leq  \|\nabla u\|^2_{\infty}\hspace*{-2ex}  \int\limits_{R>|x-y|> \delta_x} \rho_{2-\alpha}(x-y)\d y+  \|u\|^2_{\infty} R^2 \hspace*{-2ex} \int\limits_{|x-y|\geq  R} \hspace*{-2ex}  \rho_{2-\alpha}(x-y)  \d y\\
 &\leq   C \int\limits_{|x-y|> \delta_x} \rho_{2-\alpha}(x-y) dy \to 0 \quad \text{ as } \alpha\to 2^{-}.
\end{align*}
Moreover, from the above estimates one also has, 
\begin{align*}
 \int_{\Omega^c} \frac{(u(x) -u(y))^2}{|x-y|^{2}} \rho_{2-\alpha}(x-y) dy \leq   C\hspace*{-2ex}\int\limits_{|x-y|>\delta_x} \rho_{2-\alpha}(x-y) dy \leq C
\end{align*}
with the constant $C$  independent on $x$. Hence, combining this and the assumption \eqref{eq:elliptic-condition}, the statement \eqref{eqextern-lim} follows from the dominated convergence theorem. Thus, we conclude that for $u\in H^1(\Omega)$,
 \[\limsup_{\alpha \to 2^{-}} \mathcal{E}_\Omega^\alpha(u,u)= \limsup_{\alpha \to 2^{-}} \mathcal{E}^\alpha(u,u)=  \mathcal{E}^A(u,u). \]

  Thus, choosing the  constant sequence  $u_\alpha= u$ for all $ \alpha \in (0,2)$ we are provided with  the $\limsup$ condition for  both forms $( \mathcal{E}^\alpha_{\Omega}(\cdot, \cdot), H_{\nu^{\alpha}}( \Omega) )_{\alpha }$  and $( \mathcal{E}^\alpha(\cdot,\cdot) , V_{\nu^\alpha}(\Omega|\R^d))_{\alpha}$.

\vspace{.5cm}
\par \textbf{Liminf}:  Let $u, u_n\in L^2(\Omega)$ be  such that,   $u_n  \rightharpoonup u$ in $L^2(\Omega)$. Necessarily, $(u_n)_{n}$ is bounded in $ L^2(\Omega)$. 
Let $( \alpha_n)_n$ be a sequence in $(0,2)$ such that $ \alpha_n \to 2^-$ as $n\to \infty$.  If   $\liminf\limits_{n \to \infty} \mathcal{E}_\Omega^{\alpha_n}(u_n,u_n) =\infty$ then, 
\[  \mathcal{E}^A(u,u)\leq \liminf_{n \to \infty} \mathcal{E}_\Omega^{ \alpha_n }(u_n,u_n) = \liminf_{n \to \infty} \mathcal{E}^{ \alpha_n }(u_n,u_n) =\infty.  \]
Assume $\liminf\limits_{n \to \infty} \mathcal{E}_\Omega^{ \alpha_n }(u_n,u_n)<\infty$ then according to \cite{BBM01, Ponce2004}  the sequence $(u_n)_n$ has a subsequence  (which we again denote by $u_n$) converging  in $L^2(\Omega)$ to some $\widetilde{u}\in H^1(\Omega)$. Consequently, as  $u_n \rightharpoonup u$ it readily follows that, $u_n\to u$ in $L^2(\Omega)$. Therefore, taking into account that $u\in H^1(\Omega)$, the desired liminf inequality   is an immediate consequence of \autoref{lem-liminf}. The proof of \autoref{thm:Mosco-convergence} is complete.
\end{proof}

We adopt the convention that, for a given quadratic form $\big(\mathcal{E}, \mathcal{D}(\mathcal{E})\big)$, we have 
$\mathcal{E}(u,u)= \infty$ whenever $u \not\in  \mathcal{D}(\mathcal{E})$. The next result is a variant of \autoref{thm:Mosco-convergence} with $H^1(\Omega)$ replaced by $H^1_0(\Omega)$

\begin{theorem}\label{thm:Mosco-convergence-bis}
	Let $\Omega\subset \mathbb{R}^d$ be an open bounded set with a continuous boundary. Assume \eqref{eq:elliptic-condition}, \eqref{eq:integrability-condition} and \eqref{eq:translation-invariance}.
	Then the two families of  quadratic forms $\big(\mathcal{E}^\alpha_{\Omega}(\cdot, \cdot),\overline{C_c^\infty(\Omega)}^{ H_{\nu^\alpha}( \Omega)} \big)_{\alpha}$  and  
	$\big( \mathcal{E}^\alpha(\cdot, \cdot),V^\Omega_{\nu^\alpha}( \Omega|\mathbb{R}^d) \big)_{\alpha}$ both converge to  $( \mathcal{E}^A(\cdot,\cdot), H_0^{1}( \Omega) )$ in the Mosco sense in $L^2(\Omega)$ as $\alpha\to 2^-$.
\end{theorem}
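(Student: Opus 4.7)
My plan is to follow closely the scheme of the proof of \autoref{thm:Mosco-convergence}, replacing the extension-based approximations by compactly supported ones so that the Dirichlet-type constraint is respected in the limit. Both target domains equal $H_0^1(\Omega)$, on which $\mathcal{E}^A$ is a closed form, and the natural approximating class is $C_c^\infty(\Omega)$. I would use, in addition to the tools of \autoref{sec:main-proof}, the density result \autoref{thm:density-omega} to handle the space $V^\Omega_{\nu^\alpha}(\Omega|\R^d)$, and the fact that $C_c^\infty(\Omega)$ is by construction dense in $\overline{C_c^\infty(\Omega)}^{H_{\nu^\alpha}(\Omega)}$.

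For the limsup part, I would extend $\mathcal{E}^A(\cdot,\cdot)$ by $+\infty$ outside $H_0^1(\Omega)$ and fix $u \in H_0^1(\Omega)$. Picking $\phi_k \in C_c^\infty(\Omega)$ with $\phi_k \to u$ in $H^1(\Omega)$, each $\phi_k$ (extended by zero to $\R^d$) lies in both form domains. Applying \autoref{thm:quadratic-convergence-BBM} on a Lipschitz extension domain containing $\Omega$ gives $\mathcal{E}^\alpha_\Omega(\phi_k,\phi_k) \to \mathcal{E}^A(\phi_k,\phi_k)$ as $\alpha \to 2^-$. For the full form $\mathcal{E}^\alpha$, the extra contribution equals $2 \iint_{\Omega\Omega^c}\phi_k^2(x) J^\alpha(x,y)\,\d x\,\d y$; since $\delta_k := \dist(\supp \phi_k,\Omega^c) > 0$, this is dominated by $2\|\phi_k\|_\infty^2 \,|\Omega|\, \sup_x \int_{|h|>\delta_k} J^\alpha(x,x+h)\,\d h$, which tends to $0$ by \eqref{eq:integrability-condition}. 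A diagonal selection $\alpha \mapsto k(\alpha)$ then yields the required recovery sequence for both families of forms.

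For the liminf part, take $\alpha_n \to 2^-$ and $u_n$ in the relevant form domain with $u_n \rightharpoonup u$ in $L^2(\Omega)$. If $\liminf \mathcal{E}^{\alpha_n}(u_n,u_n) = \infty$ there is nothing to show, so I assume boundedness; as in the proof of \autoref{thm:Mosco-convergence}, the characterization in \cite{BBM01, Ponce2004} delivers a subsequence converging strongly in $L^2(\Omega)$ to some $\widetilde u \in H^1(\Omega)$, and the weak convergence forces $\widetilde u = u$. \autoref{lem-liminf} then yields $\mathcal{E}^A(u,u) \le \liminf \mathcal{E}^{\alpha_n}_\Omega(u_n, u_n)$. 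The key remaining point, and the main obstacle I anticipate, is promoting $u \in H^1(\Omega)$ to $u \in H_0^1(\Omega)$. For the space $V^\Omega_{\nu^{\alpha_n}}$ the argument is relatively clean: since each $u_n$ vanishes on $\Omega^c$, I would apply the BBM-Ponce argument on a bounded open set strictly containing $\Omega$ to obtain a limit belonging to $H^1(\R^d)$ and vanishing on $\Omega^c$, hence in $H_0^1(\Omega)$ by the continuity of $\partial \Omega$. For $\overline{C_c^\infty(\Omega)}^{H_{\nu^{\alpha_n}}(\Omega)}$ I would choose, for each $n$, a function $\phi_n \in C_c^\infty(\Omega)$ with $\|u_n - \phi_n\|_{H_{\nu^{\alpha_n}}(\Omega)} < 1/n$; the $\phi_n$ (extended by zero) belong trivially to $V^\Omega_{\nu^{\alpha_n}}$, which reduces the problem to the previous case and completes the proof.
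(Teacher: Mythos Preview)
Your overall strategy matches the paper's: the paper merely remarks that, apart from the density of the form domains in $L^2(\Omega)$ (trivial for $\overline{C_c^\infty(\Omega)}^{H_{\nu^\alpha}(\Omega)}$, supplied by \autoref{thm:density-omega} for $V^\Omega_{\nu^\alpha}(\Omega|\R^d)$), the proof is identical to that of \autoref{thm:Mosco-convergence}. You go further than the paper by isolating the extra task in the liminf part --- promoting $u\in H^1(\Omega)$ to $u\in H_0^1(\Omega)$ --- and your limsup argument and your liminf argument for $V^\Omega_{\nu^\alpha}(\Omega|\R^d)$ (enlarging to $\Omega'\supsetneq\overline{\Omega}$ and using that, for continuous $\partial\Omega$, the space $\{v\in H^1(\R^d):v=0 \text{ a.e.\ on }\Omega^c\}$ coincides with $H_0^1(\Omega)$) are sound.

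The reduction you propose for $(\mathcal{E}^\alpha_\Omega, \overline{C_c^\infty(\Omega)}^{H_{\nu^\alpha}(\Omega)})$, however, does not go through. Choosing $\phi_n\in C_c^\infty(\Omega)$ with $\|u_n-\phi_n\|_{H_{\nu^{\alpha_n}}(\Omega)}<1/n$ only controls the $\Omega\times\Omega$ energy $\mathcal{E}^{\alpha_n}_\Omega(\phi_n,\phi_n)$. Your ``previous case'' relies on BBM--Ponce compactness on an enlarged set $\Omega'$, which requires a bound on $\iint_{\Omega'\Omega'}(\phi_n(x)-\phi_n(y))^2 J^{\alpha_n}(x,y)\,\d x\,\d y$; the missing piece is the cross term
\[
2\iint_{\Omega\times(\Omega'\setminus\Omega)}\phi_n^2(x)\,J^{\alpha_n}(x,y)\,\d x\,\d y,
\]
which is not controlled since $\operatorname{dist}(\operatorname{supp}\phi_n,\partial\Omega)$ may tend to zero. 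So ``$\phi_n\in V^\Omega_{\nu^{\alpha_n}}$'' alone does not reduce the problem to the $V^\Omega$ case. The obstruction is genuine and not a technicality: for bounded $\nu^\alpha$ as in \autoref{ex:bounded-nu} one has $\overline{C_c^\infty(\Omega)}^{H_{\nu^\alpha}(\Omega)}=L^2(\Omega)=H_{\nu^\alpha}(\Omega)$, so the form coincides with the one treated in \autoref{thm:Mosco-convergence}, whose Mosco limit is $(\mathcal{E}^A,H^1(\Omega))$; there the constant sequence $u_n=u$ with $u\in H^1(\Omega)\setminus H_0^1(\Omega)$ already violates the liminf inequality against the target $(\mathcal{E}^A,H_0^1(\Omega))$, so no argument of the type you sketch can succeed in that regime without additional hypotheses.
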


The result relies on the density of $\overline{C_c^\infty(\Omega)}^{ H_{\nu^\alpha}( \Omega)}$ resp. $V^\Omega_{\nu^\alpha}( \Omega|\mathbb{R}^d)$. The density of the first space is trivial. The density of the second space is formulated in \autoref{thm:density-omega}. Apart from the density issue, the details of the proof are the same as in the proof of \autoref{thm:Mosco-convergence}.

\section{Examples  of kernels}\label{sec:examples}

Here we collect some concrete  examples of sequences  $(\rho_\varepsilon)_\varepsilon$ satisfying the assumptions in \autoref{def:nu-alpha}. Note that we have two different kinds  of examples. The functions $h \mapsto \rho_\eps(h)$ that appear in \autoref{ex:most-important} are unbounded and the singularity gets critical at $h=0$ as $\eps \to 0+$. The functions $\rho_\eps$ that appear in \autoref{ex:bounded-nu} are bounded where the bound depends on a rescaling factor that blows up as $\eps \to 0+$. Both examples lead to a diffusion operator resp. gradient form in the limit.

\medskip

\noindent Through all these examples, $d\geq 1$, the constant  $\omega_{d-1}$ is the area of the $d-1$-dimensional unit  sphere and  $\varepsilon_{0} >0$ is a fixed number. 

\medskip

\begin{example}\label{ex:4-0}
This example is taken from \autoref{ex:most-important}. For $\eps > 0$ and $x \in \R^d$ set 
\begin{align*}
\rho_\eps (x) = \frac{\eps}{\omega_{d-1}} |x|^{-d+\eps} \mathbbm{1}_{B_1}(x)\,.
\end{align*}
\end{example}

\begin{example}\label{ex:4-1}
This example is a version of \autoref{ex:bounded-nu}. Assume  $d\geq 1$,  $0<\varepsilon < \varepsilon_{0}$   and  any  $-d < \beta \leq 2$. Set 
\begin{align*}
	\rho_\varepsilon(x) = \frac{d+\beta}{ \omega_{d-1}\varepsilon^{d+\beta}} |x|^{\beta}\mathbbm{1}_{B_{\varepsilon}}(x),\qquad\qquad x\in \mathbb{R}^d. 
\end{align*} 
\end{example}

\begin{example}\label{ex:4-2}
For   $d\geq 1$ and  $0<\varepsilon < \varepsilon_{0}$. Set
\begin{align*}
\rho_\varepsilon(x) = \frac{1}{\omega_{d-1}\log(\varepsilon_{0}/\varepsilon )}|x|^{-d}\mathbbm{1}_{\{\varepsilon <|x|<\varepsilon_{0}\}}. 
\end{align*} 
Note that  $ \log(\varepsilon_{0}/\varepsilon )\sim  |\log(\varepsilon )| $. This example  is the counter part of \autoref{ex:4-1} at the end point $\beta =-d$.
\end{example}


\begin{example}\label{ex:4-3}
Assume   $d\geq 1$, $0<\varepsilon < \varepsilon_{0}$ and  $-d < \beta \leq 2$.  For $x \in \mathbb{R}^d$ consider
\begin{align*}
\rho_\varepsilon(x) = \frac{(|x|+\varepsilon)^{\beta}}{\omega_{d-1} b_\varepsilon}\mathbbm{1}_{B_{\varepsilon_{0}}}(x)\qquad\qquad \hbox{with }\quad   b_\varepsilon =  \varepsilon^{d+\beta}\int^{1}_{\frac{\varepsilon}{\varepsilon+\varepsilon_{0}}}  t^{-d-\beta-1}(1-t)^{d-1}dt. 
\end{align*} 
The constant $b_\varepsilon $ is chosen such that $\int_{\mathbb{R}^d} \rho_\varepsilon(x) dx =1$. Additionally one  can check     $$\frac{(d+\beta)}{b_\varepsilon\varepsilon_{0} ^{d+\beta}}\to 1\quad \hbox{as }~~~\varepsilon\to 0^+. $$
\end{example}

\begin{example}\label{ex:4-4}
Assume   $d\geq 1$ and  $0<\varepsilon < \varepsilon_{0}$.  For $x \in \mathbb{R}^d$ consider
\begin{align*}
\rho_\varepsilon(x) = \frac{(|x|+\varepsilon)^{-d}}{\omega_{d-1} b_\varepsilon}\mathbbm{1}_{B_{\varepsilon_{0}}(x)}\qquad\qquad \hbox{with }\quad   b_\varepsilon =  \int^{1}_{\frac{\varepsilon}{\varepsilon+\varepsilon_{0}}}  t^{-1}(1-t)^{d-1}dt. 
\end{align*}  
The choice of the constant $b_\varepsilon $ ensures $\int_{\mathbb{R}^d} \rho_\varepsilon(x) dx =1$. It is not difficult to check 
$$ \frac{|\log(\varepsilon)|}{b_\varepsilon}\to 1\quad \hbox{as }~~~\varepsilon\to 0^+. $$
This example is the  counter part  of \autoref{ex:4-3} at the end point $\beta=-d$.
\end{example}

\begin{example}\label{ex:4-5} Assume   $d\geq 1$, $0<\varepsilon < \varepsilon_{0}$ and $\beta>0$.  For $x \in \mathbb{R}^d$ consider

\begin{align*}
\rho_\varepsilon(x) = \frac{|x|^\beta}{\omega_{d-1} b_\varepsilon(|x|+\varepsilon)^{d+\beta}}\mathbbm{1}_{B_{\varepsilon_{0}}(x)}\qquad\qquad \hbox{with }\quad   b_\varepsilon =  \int^{1}_{\frac{\varepsilon}{\varepsilon+\varepsilon_{0}}}  t^{-1}(1-t)^{d+\beta-1}dt. 
\end{align*} 
As above, the choice of $b_\varepsilon $ ensures $\int_{\mathbb{R}^d} \rho_\varepsilon(x) dx =1$. It is not difficult to check that   
$$ \frac{|\log(\varepsilon)|}{b_\varepsilon}\to 1\quad \hbox{as }~~~\varepsilon\to 0^+. $$
\end{example}

\begin{example}\label{ex:4-6} Assume $d\geq 1$, $0<\varepsilon<\varepsilon_0$. Let $\phi: \mathbb{R}\to [0, \infty)$ be almost decreasing and such that, $\int_{\mathbb{R}}\phi(s)\d s= 1$
\begin{align*}
\rho_\varepsilon(x) = \frac{|x|^{-d+1}}{\omega_{d-1}\varepsilon }\phi\big(|x|/\varepsilon\big)\, .
\end{align*}
\end{example}


\newcommand{\etalchar}[1]{$^{#1}$}

\end{document}